\newcommand{\sett}[1]{\left\{#1\right\}}
\newcommand{\A}{\mathcal{A}}
\def\A{\mathcal A}
\def\amslatex{$\mathcal{A}\kern-.1667em\lower.5ex\hbox{$M$}\kern-.125em\mathcal{S}$-\LaTeX}
\newcommand{\abs}[1]{\left\vert#1\right\vert}
\newcommand{\norm}[1]{\left\Vert#1\right\Vert}
\newcommand{\h}{\mathcal{H}}
\newtheorem{set}{set}[section]
\newtheorem{Corollary}[set]{Corollary}
\newtheorem{Definition}[set]{Definition}
\newtheorem{Lemma}[set]{Lemma}
\newtheorem{Remark}[set]{Remark}
\newtheorem{Theorem}[set]{Theorem}
\newcommand{\define}{\mathrel{\hbox{$\equiv$\hskip -.90em \lower .47ex \hbox{$\leftharpoondown$}}}}
\newcommand{\enifed}{\mathrel{\hbox{$\equiv$\hskip -.90em \lower .47ex \hbox{$\rightharpoondown$}}}}
\begin{document}
\title{Mixing and weakly mixing abelian subalgebras of type $\rm{II}_{1}$ factors}

\author[Cameron]{Jan Cameron}
\address{\hskip-\parindent
Jan Cameron, Department of Mathematics, Vassar College, Poughkeepsie, New York,
U.S.A.}
\email{jacameron@vassar.edu}
\author[Fang]{Junsheng Fang}
\address{\hskip-\parindent
Junsheng Fang, Department of Mathematics, Dalian University of Technology, Dalian, China.}
\email{junshengfang@gmail.com}
\author[Mukherjee]{Kunal Mukherjee}
\address{\hskip-\parindent
Kunal Mukherjee, Department of Mathematics, Indian Institute of Technology Madras, Chennai, India.}
\email{kunal@iitm.ac.in}

\date{15 Aug, 2014}

\maketitle

\begin{abstract}
This paper studies weakly mixing $($singular$)$ and mixing masas in type $\rm{II}_{1}$ factors 
from a bimodule point of view. Several necessary and sufficient conditions to characterize 
the normalizing algebra of a masa are presented. We also study the structure of mixing inclusions, with special attention paid to masas of product class.
A recent result of Jolissaint and Stalder concerning mixing masas arising out of inclusions of groups is 
revisited. One consequence of our structural results rules out the existence of certain Koopman-realizable measures, arising from semidirect products, which are absolutely continuous but not Lebesgue. We also show that 
there exist uncountably many pairwise non--conjugate mixing masas in the free group factors 
each with Puk\'{a}nszky invariant $\{1,\infty\}$.\\
\newline
\emph{Keywords: }von Neumann algebra; mixing masa; singular masa; measure--multiplicity invariant, Puk\'{a}nszky invariant\\
\emph{2000 Mathematics Subject Classification:} 46L10

\end{abstract}

\section{Introduction}

This paper is a continuation of work initiated by the authors in  
\cite{CFM}, and deals with maximal abelian self--adjoint subalgebras $($masas in the sequel$)$ in finite von Neumann algebras.  
In particular, we study the various notions of singular masas in $\rm{II}_{1}$ factors that arise from notions 
in ergodic theory and undertake a systematic analysis of the bimodules associated to these masas. 
While the techniques used in \cite{CFM} were more operator algebraic in nature,
those used in this note are primarily measure-theoretic, so there is little overlap between the techniques deployed here and those found in \cite{CFM}. It will be evident from the results in \S3--\S6 that the measure-theoretic approaches in this paper yield new insights into the notion of singularity in the context of masas: among other results, in what follows we relate the mixing properties of masas to those of certain associated measures; build on a number of recent results on strong singularity in \cite{J-S, Muk1, Muk2,S-S3, SSWW}; and give a new construction of many non-conjugate mixing masas in the free group factors.\\
\indent The study of normalizers of masas in $\rm{II}_{1}$ factors has a long history which 
dates back to 1954 \cite{Dix1}. Let $M$ be a finite von Neumann algebra gifted with a fixed 
faithful, normal, tracial state $\tau$. Let $M$ be in its  standard 
form i.e., acts on the GNS space associated to $\tau$ by left multiplication. 
Given a masa $A\subset M$, Dixmier defined the group of normalizing unitaries of $A$ to be 
$N(A)=\{u\in \mathcal{U}(M):uAu^{*}=A\}$, where $\mathcal{U}(M)$ denotes the unitary group 
of $M$ \cite{Dix1}. He named $A$ to be singular, if $N(A)$ is as small as possible i.e., 
$N(A)\subset A$. This definition is purely algebraic but the true nature of singularity 
was first unveiled by Nielsen \cite{Ni}. 
In \cite{Ni}, it was shown that for a $($free$)$ ergodic measure preserving action $\alpha$ 
of a countable discrete abelian group $G$ on a standard probability space, the copy of the group in the associated crossed product produces a singular masa if and only if $\alpha$ is weakly mixing $($see \cite{J-S, Kre} for the relevant definitions$)$.\\
\indent Note that every separable $\rm{II}_{1}$ factor has singular masas \cite{Po2}. The authors of \cite{RobSS,S-S3} 
studied the notion of  `infinity--two' norm to handle singular masas and defined an apparently stronger notion of 
singularity known as strong singularity. Subsequently, in \cite{SSWW} it was proved  that strong 
singularity is equivalent to singularity and:
\begin{Theorem}\label{thm_si}\emph{\cite{SSWW}}
The masa $A\subset M$ is singular if and only if, 
given any finite set $x_{i}\in M$ with $\mathbb{E}_{A}(x_{i})=0$ for all $i$ and for 
every $\epsilon>0$, there exists a unitary $u\in A$ such that $\norm{\mathbb{E}_{A}(x_{i}ux_{j})}_{2}<\epsilon$ for all $i,j$, where $\mathbb{E}_{A}$ 
denotes the unique trace preserving normal conditional expectation onto $A$.
\end{Theorem}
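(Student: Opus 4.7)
The plan is to handle the two implications separately: $(\Leftarrow)$ is a direct algebraic argument using a normalizing unitary, while $(\Rightarrow)$ requires averaging inside the Jones basic construction $\langle M, e_A\rangle$ together with a structural theorem for singular masas.

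For $(\Leftarrow)$, I assume the averaging hypothesis and suppose for a contradiction that some $u \in N(A) \setminus A$ exists. Set $w = \mathbb{E}_A(u) \in A$ and $x = u - w$, so $\mathbb{E}_A(x) = 0$. Applying the hypothesis to the pair $\{x, x^*\}$ yields, for each $\epsilon > 0$, a unitary $v \in A$ with $\|\mathbb{E}_A(xvx^*)\|_2 < \epsilon$. Expanding $xvx^* = uvu^* - uvw^* - wvu^* + wvw^*$ and applying $\mathbb{E}_A$ (using $uAu^* = A$ so $\mathbb{E}_A(uvu^*) = uvu^*$, and $A$-bimodularity so $\mathbb{E}_A(uvw^*) = wvw^* = \mathbb{E}_A(wvu^*)$), I obtain
\begin{equation*}
\mathbb{E}_A(xvx^*) = uvu^* - wvw^*.
\end{equation*}
Then $\|uvu^*\|_2 = 1$, and since $A$ is abelian, $wvw^* = vww^*$ gives $\|wvw^*\|_2 = \|ww^*\|_2$. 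The triangle inequality yields $\|\mathbb{E}_A(xvx^*)\|_2 \geq 1 - \|ww^*\|_2$. If $u \notin A$, then $w$ cannot be unitary in $A$ (else $\tau(uw^*) = \tau(ww^*) = 1$ forces $\|u-w\|_2 = 0$), so $ww^* \neq 1$, and combined with $(ww^*)^2 \leq ww^* \leq 1$ this yields $\|ww^*\|_2 < 1$. Choosing $\epsilon < 1 - \|ww^*\|_2$ produces the required contradiction.

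For $(\Rightarrow)$, assume $A$ singular and let $\{x_1, \ldots, x_n\}$ with $\mathbb{E}_A(x_i) = 0$ and $\epsilon > 0$ be given. Work in $\langle M, e_A\rangle$ with its semifinite trace $\hat\tau$. Set $Q = \sum_i x_i^* e_A x_i \in \langle M, e_A\rangle^+$ (with $\hat\tau(Q) = \sum_i\|x_i\|_2^2 < \infty$) and $\eta_j = x_j\hat 1 \in L^2(M)$. Since each $v \in \mathcal{U}(A)$ commutes with $e_A$, a direct calculation gives
\begin{equation*}
\sum_{i,j}\|\mathbb{E}_A(x_i v x_j)\|_2^2 = \sum_j \langle v^* Q v\,\eta_j,\eta_j\rangle.
\end{equation*}
Dixmier's averaging theorem, applied to the abelian (hence amenable) group $\mathcal{U}(A)$, produces $Q_0$ in the weak-operator closed convex hull of $\{v^*Qv : v \in \mathcal{U}(A)\}$ lying in $A' \cap \langle M, e_A\rangle$, with $\hat\tau(Q_0) \leq \hat\tau(Q) < \infty$. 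Singularity enters via Popa's structural theorem: for singular $A$, the $\hat\tau$-finite elements of $A' \cap \langle M, e_A\rangle$ lie in the weak-operator closure of $\mathrm{span}\{ae_Ab : a, b \in A\}$. For any such generator, the pairing
\begin{equation*}
\langle ae_Ab\,\eta_j,\eta_j\rangle = \tau\bigl(\mathbb{E}_A(x_j^*)\,a\,\mathbb{E}_A(bx_j)\bigr) = 0
\end{equation*}
vanishes since $\mathbb{E}_A(x_j) = 0$; by weak-operator continuity $\langle Q_0\eta_j,\eta_j\rangle = 0$ for each $j$. Approximating $Q_0$ by convex combinations $\sum_l\lambda_l(v_l)^*Qv_l$ and using nonnegativity of each summand, a pigeonhole argument extracts a single $v \in \mathcal{U}(A)$ with $\sum_{i,j}\|\mathbb{E}_A(x_ivx_j)\|_2^2 < \epsilon^2$, giving the required bound.

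The main obstacle is the invocation of Popa's structural theorem in the forward direction — the only place the hypothesis $N(A) \subset A$ is used decisively. Without singularity, the Dixmier average $Q_0$ could pick up contributions from projections $ue_Au^*$ with $u \in \mathcal{N}(A) \setminus A$, and these need not annihilate the $\eta_j$. Once this structural input is granted, the rest of the argument — the encoding of the finite family as the single positive operator $Q$, the Dixmier averaging, and the pigeonhole extraction — is routine.
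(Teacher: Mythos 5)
This theorem is not proved in the paper under review: it is quoted from \cite{SSWW}, so there is no internal argument to compare against, and your proposal must be judged against the known proof of Sinclair--Smith--White--Wiggins, whose overall route (basic construction, averaging over $\mathcal{U}(A)$, a structural input from singularity, then pigeonhole) you have correctly reproduced. Your $(\Leftarrow)$ direction is complete and correct. In the $(\Rightarrow)$ direction, the identity $\sum_{i,j}\|\mathbb{E}_A(x_ivx_j)\|_2^2=\sum_j\langle v^*Qv\,\eta_j,\eta_j\rangle$, the fixed-point/averaging step (Markov--Kakutani on the bounded WO-closed convex hull, or equivalently the unique element of minimal $\|\cdot\|_{2,\hat\tau}$-norm), the lower-semicontinuity bound $\hat\tau(Q_0)\le\hat\tau(Q)$, and the final extraction of a single unitary are all sound.

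The gap is that the entire difficulty has been pushed into the ``structural theorem,'' which as stated is not an off-the-shelf citation. Since $e_A$ commutes with $A$, one has $ae_Ab=abe_A$, so $\overline{\mathrm{span}}^{\,wo}\{ae_Ab:a,b\in A\}=Ae_A=e_A\langle M,e_A\rangle e_A$; your claim is therefore precisely the assertion that every positive $\hat\tau$-finite element of $A'\cap\langle M,e_A\rangle$ is supported under $e_A$, equivalently that $A'\cap\langle M,e_A\rangle$ contains no nonzero $\hat\tau$-finite projection orthogonal to $e_A$. That assertion is true for singular masas, but it \emph{is} the crux of the theorem, and it does not follow from singularity by a soft argument: one needs Popa's theorem that any projection $p\in A'\cap\langle M,e_A\rangle$ with $0<\hat\tau(p)<\infty$ dominates $we_Aw^*$ for some nonzero partial isometry $w$ in the groupoid normalizer $\mathcal{GN}(A)$, together with the fact that for masas singularity forces $\mathcal{GN}(A)\subset A$ (so that $we_Aw^*=ww^*e_A\le e_A$ and hence $pe_A\neq 0$), and then a spectral-projection argument applied to $(1-e_A)h(1-e_A)$ to pass from projections to general positive finite elements. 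Without this chain being supplied or accurately cited, the proof is incomplete exactly at the one point where singularity is used; your diagnosis in the final paragraph shows you know this is where the weight sits, but the statement you lean on is a repackaging of the conclusion rather than an available lemma.
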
 
\noindent This last property is known as the weak asymptotic homomorphism property 
$($WAHP in the sequel$)$.    It was shown in \cite[Theorem ~6.6]{Muk1} that the unitary in the definition of the WAHP can always be chosen to be $v^k$, for some positive integer $k$, where $v \in A$ is a Haar unitary generator of the masa $A$. \\
\indent 
The main topic of this paper -- the notion of a strongly mixing masa in a finite von Neumann algebra -- was introduced by Jolissaint and Stalder in \cite{J-S} $($see Definition \ref{sm}$)$. 
They proved that if a countable discrete abelian group $G$ acts on $M$ by $($free$)$ mixing $\tau$--preserving automorphisms, then the copy of the group in the associated crossed product produces a strongly mixing masa (in what follows, we will use the terms `strongly mixing masa' and `mixing masa' interchangeably).  In \cite{CFM}, the authors gave a formulation of Jolissaint and Stalder's  definition of strong mixing for general subalgebras, and showed that one can replace the groups of unitaries found in the definition of \cite{J-S} by bounded weakly 
null sequences. The following definition was shown in \cite{CFM} to be equivalent to the one in \cite{J-S}, in the context of masas.
\begin{Definition}$($Compare with \cite[Definition 3.4]{J-S}$)$\label{sm}
A masa $A\subset M$ is strongly mixing, if for any bounded sequence of operators $a_{n}\in A$ converging to zero in the $w.o.t$ $($weakly null in the sequel$)$ 
and $x,y\in M$ such that $\mathbb{E}_{A}(x)=0$ and $\mathbb{E}_{A}(y)=0$, 
one has that $\norm{\mathbb{E}_{A}(xa_{n}y)}_2 \rightarrow 0$ as 
$n\rightarrow\infty$. 
\end{Definition}
\indent  The outline of the paper is as follows.   In Section 2, we will associate to a masa in a II$_1$ factor a measure class on a compact space of the form $X \times X$, called the left-right measure, as well as a multiplicity function on the same space $($see Definition \ref{mminv}$)$; these two objects together encode the structure of the standard Hilbert space as a natural bimodule over the masa.  Most of the subsequent analysis in the paper will focus on the measure class.  In the third section, we present a set of equivalent conditions that 
characterize the operators in the normalizing algebra of a general masa in a II$_1$ factor $($Theorem \ref{tfae}$)$. Crucial to 
the proof of this result is a classical theorem of Wiener on Fourier coefficients of measures. Theorem \ref{CorChanged}, a main outcome of our analysis in Section 3, is a generalization of the asymptotic homomorphism property introduced in \cite{S-S3}. We show  there is a sequence of positive integers $k_{l}$ such that $\norm{\mathbb{E}_{A}(xv^{k_{l}}x^{*})}_{2}\rightarrow 0$ as $l\rightarrow \infty$ for all $x\in M$ such that $\mathbb{E}_{N(A)^{\prime\prime}}(x)=0$, 
where $v$ is a Haar unitary generator of the masa $A$.  These statements are independent of the
choice of coordinates, i.e., the choice of the Haar unitary generator. \\
\indent In Section 4, we study a special class of masas called masas of product class $($also studied in \cite{Muk2}$)$, which possess vigorous mixing properties. In particular, they are mixing, $($Theorem \ref{summability_imply_mixing}$)$, 
and the convergence in the definition of mixing for masas of product class is the stronger notion of almost everywhere convergence
$($Theorem \ref{aeconv}$)$. Our consideration of masas of product class was inspired by a similar, though slightly 
different, class of masas originating in work of Sinclair and Smith $($cf. \cite[\S11]{S-S2}$)$. For masas of product class, 
we demonstrate the existence of sufficiently many vectors $\zeta\in L^{2}(M)\ominus L^{2}(A)$ for which $\mathbb{E}_{A}(\zeta v^{n}\zeta^{*})=0$ for all $n\neq 0$, where $v$ is a Haar unitary generator of $A$ $($Theorem \ref{density_wand_vect}$)$. 
A slightly weaker form of the previous statement is a special property for masas in free group factors by a deep theorem of Voiculescu \cite{Voi}. We also show that a large class of 
mixing masas in $\rm{II}_{1}$ factors, namely those which arise out of inclusions of groups, fall 
in the product class $($Theorem \ref{inclusion_of_subgroups}$)$. This invigorates one of 
the main questions addressed in \cite[Theorem 3.5]{J-S}. This result seems to be very 
interesting,  as the combinatorial relations in \cite{J-S} that determine mixing turn out to be  spectral 
analytic in nature. \\
\indent The subject of Section 5 is mixing masas that arise from measurable dynamical systems.  A Radon measure $\mu$ on $[0,1]$ is said to be mixing (or, sometimes, Rajchman) if its
Fourier coefficients $\widehat{\mu}_{n}=\int_{0}^{1}e^{2\pi
int}d\mu(t)$ converge to zero as $\abs{n}\rightarrow\infty$. One
can also define mixing measures on the circle $($more generally on separable 
compact abelian groups$)$ by integrating the
functions $z^{n}$ with respect to the measure. The measure $\mu$ is called weak 
mixing or non--atomic $($see \cite{Kat}$)$, if its Fourier coefficients converge 
strongly $($in absolute value$)$ to zero in the sense of Ces\`{a}ro.  
We show that for masas arising out of mixing 
actions along the direction of the groups in the associated crossed products, 
the disintegrations of their left--right measures with respect to the coordinate projections yield mixing measures for almost all 
fibres $($Theorem \ref{action_sm}$)$. This, in turn, justifies the terminology `mixing masa.'
We compute the left-right measures of masas arising out of group actions by relating it to the 
maximal spectral types of the actions $($Theorem \ref{left--right}$)$. This was stated 
in \cite{Ne-St}, but our way of calculating has some advantages; 
the operators and expressions involved in the definition of WAHP and mixing naturally pop up in our
calculation. For standard results 
about direct integrals used in these analyses, we refer the reader to \cite{KadR}.\\
\indent  The following is an old open problem in ergodic theory $($see Remark \ref{DynamNoexist}$)$: 
Can the maximal spectral type of an ergodic transformation be absolutely continuous but not Lebesgue? For an excellent 
account on these class of problems check \cite[\S6]{KatL}. Measures $($strictly speaking 
equivalence classes of measures$)$ which arise as maximal spectral type of $\mathbb{Z}$--systems will be called Koopman--realizable. When Theorem \ref{inclusion_of_subgroups} is combined with Theorem \ref{left--right}, we conclude:  There does not exist any 
countable discrete non abelian group of the form $G\rtimes \mathbb{Z}$ such that\\ 
$(i)$ $L(\mathbb{Z})\subset L(G\rtimes \mathbb{Z})$ is a mixing masa,\\
$(ii)$ the maximal spectral type of the $\mathbb{Z}$--action on $L(G)$ is absolutely continuous but 
not Lebesgue $($see Corollary \ref{notexist}, Corollary \ref{formal}$)$.\\
\indent Borrowing ideas from ergodic theory, in Section 6 we exhibit examples of uncountably many pairwise 
non conjugate $($by automorphisms$)$ mixing masas in the free group factors each with Puk\'{a}nszky invariant 
$\{1,\infty\}$ $($Theorem \ref{arbitrary}$)$.  Finally, a technical result concerning left-right measures, that is necessary in the later sections
of the paper, is proved in the appendix.\\

\noindent \textbf{Acknowledgements}: JC was partially supported by a research travel grant from the Simons Foundation, and by Simons Collaboration Grant for Mathematicians \#319001.  The authors thank the anonymous referee for many suggestions for improving the presentation of the paper.  

\section{Preliminaries and Setup}

\indent All measure spaces appearing in this paper are assumed to be standard Borel spaces, 
all von Neumann algebras are assumed to be separable, and all inclusions of algebras are unital. 
Whereas in \cite{CFM}, we  dealt with general inclusions of finite von Neumann algebras, in this paper, 
we focus on the special case of masas in $\rm{II}_{1}$ factors. Let $M$ be a $\rm{II}_{1}$ factor, equipped 
with a faithful,
normal, tracial state $\tau$. The trace $\tau$ induces an inner product on 
$M$ by $\langle x,y\rangle=\tau(y^{*}x)$, $x,y\in M$, and thus induces a 
Hilbert norm $\norm{\cdot}_{2}$ on $M$. Let $L^2(M):=L^{2}(M,\tau)$ denote 
the completion of $M$ in $\norm{\cdot}_{2}$. We assume that $M$ acts on
$L^2(M)$ via left multiplication. Let $J$ denote the 
conjugation operator on $L^{2}(M)$, obtained from extending the densely
defined map $J:M\subset L^{2}(M)\rightarrow M\subset L^{2}(M)$ by $Jx=x^{*}$. 
The image of a $L^{2}$--vector
$\zeta$ under $J$ will be denoted by $\zeta^{*}$. Let $A\subset M$ be a
masa and let $e_{A}:L^{2}(M)\rightarrow L^{2}(A)$ be the Jones
projection associated to $A$, where $L^{2}(A)=\overline{A}^{\norm{\cdot}_{2}}$. 
Denote $\A=(A\cup
JAJ)^{\prime\prime}$. It is well known that $e_{A}\in \A$ $($cf. \cite{S-S2}$)$. 
The one--norm on $M$ is defined as
$\norm{x}_{1}=\tau(\abs{x})$, $x\in M$. It is also true that 
$\norm{x}_{1}=\underset{y\in M:\norm{y}\leq 1}\sup\abs{\tau(xy)}$, $x\in M$. 
The completion of $M$ in $\norm{\cdot}_{1}$ is denoted by $L^{1}(M)$. 
The $\tau$--preserving normal conditional expectation $\mathbb{E}_{A}$ onto $A$, 
the trace and $J$ extend to
$L^{1}(M)$ in a continuous fashion, and, $L^{1}(M)$ is also identified with 
the predual of $M$. With abuse of notation, we write 
$e_{A}(\zeta)=\mathbb{E}_{A}(\zeta)$ for $L^{1}(M)$ and $L^{2}(M)$
vectors. Similarly, we will use the same symbols $\tau$ and $J$ to denote their
extensions. This will be clear from the context and will cause no
confusion. For more details check \cite{S-S2}.\\
\indent Given a type $\rm{I}$ von Neumann algebra $B$,
write Type($B$) for the set of all those
$n\in \mathbb{N}\cup \{\infty\}$, such that $B$ has a nonzero component of
type $\rm{I}_{n}$. The Puk\'{a}nszky invariant of a masa $A\subset
M$, denoted by $Puk(A)$ (or $Puk_{M}(A)$ when the containing factor
is ambiguous) is defined to be Type($\A^{\prime}(1 - e_{A}))$ \cite{Pu}.
A stronger invariant for masas in $\rm{II}_1$ factors called the measure--multiplicity invariant  was studied in \cite{DSS,Muk1,Muk2,Ne-St}, and was used in
\cite{DSS,Muk2} to distinguish masas with same Puk\'{a}nszky
invariant \cite{Pu}.  The measure--multiplicity invariant of a masa has two main components: a measure class
$($\emph{left--right measure}$)$ and a multiplicity function, which
together encode the structure of the standard Hilbert space $L^2(M)$ as the natural
$A,A$--bimodule. Both are spectral invariants. Such study of bimodules first appeared in a 
handwritten notes of Connes \cite{Co}. 

\indent For a masa $A\subset M$, one fixes a unital, norm separable and $\sigma$--weakly 
dense $($also dense in the $w.o.t)$ $C^{*}$--subalgebra of 
$A$ which is isomorphic to $C(X)$ for some compact metric $($Hausdorff$)$ space $X$. 
Let $\lambda$ denote the tracial measure on $X$.
For $\zeta_{1},\zeta_{2}\in L^{2}(M)$, let
$\kappa_{\zeta_{1},\zeta_{2}}:C(X)\otimes C(X)\rightarrow \mathbb{C}$ be
the linear functional defined by,
\begin{align}
\nonumber \kappa_{\zeta_{1},\zeta_{2}}(a\otimes b)=\langle
a\zeta_{1} b,\zeta_{2}\rangle,\text{ } a,b\in C(X).
\end{align}
Then $\kappa_{\zeta_{1},\zeta_{2}}$ induces a unique complex Radon
measure $\eta_{\zeta_{1},\zeta_{2}}$ on $X\times X$ given by,
\begin{align}\label{measure_from_kappa}
\kappa_{\zeta_{1},\zeta_{2}}(a\otimes b)=\int_{X\times
X}a(t)b(s)d\eta_{\zeta_{1},\zeta_{2}}(t,s),
\end{align}
and $\norm{\eta_{\zeta_{1},\zeta_{2}}}_{t.v}=\norm{\kappa_{\zeta_{1},\zeta_{2}}}$,
where $\norm{\cdot}_{t.v}$ denotes the total variation norm of
measures.\\
\indent Write $\eta_{\zeta,\zeta}=\eta_{\zeta}$. Note that
$\eta_{\zeta}$ is a positive measure for all $\zeta\in L^{2}(M)$. It
is easy to see that the following polarization type identity holds:
\begin{align}\label{polarize}
4\eta_{\zeta_{1},\zeta_{2}}=\left(\eta_{\zeta_{1}+\zeta_{2}}-\eta_{\zeta_{1}-\zeta_{2}}\right)+i\left(\eta_{\zeta_{1}+i\zeta_{2}}-\eta_{\zeta_{1}-i\zeta_{2}}
\right).
\end{align}
Note that the decomposition of $\eta_{\zeta_{1},\zeta_{2}}$ in equation
\eqref{polarize} need not be its Hahn decomposition in general, but
\begin{align}\label{polarization1}
4\abs{\eta_{\zeta_{1},\zeta_{2}}}&\leq\left(\eta_{\zeta_{1}+\zeta_{2}}+\eta_{\zeta_{1}-\zeta_{2}}\right)+\left(\eta_{\zeta_{1}+i\zeta_{2}}+\eta_{\zeta_{1}-i\zeta_{2}}\right)
=4(\eta_{\zeta_{1}}+\eta_{\zeta_{2}}),
\end{align}
so that
\begin{align}\label{abscont11}
\abs{\eta_{\zeta_{1},\zeta_{2}}}\leq
\eta_{\zeta_{1}}+\eta_{\zeta_{2}}.
\end{align}
For a set $X$, denote by $\Delta(X)$
 the diagonal of $X\times X$.

\begin{Definition}\emph{\cite{DSS,Muk1,Ne-St}}\label{mminv}
The measure--multiplicity invariant of $A\subset M$, denoted by
$m.m(A)$, is the equivalence class of quadruples
$(X,\lambda_{X},[\eta_{\mid\Delta(X)^{c}}],m_{\mid\Delta(X)^{c}})$
under the equivalence relation
$\sim_{m.m}$, where\\
$(i)$ $X$ is a compact Hausdorff space such that $C(X)$ is a unital, norm separable, 
w.o.t dense subalgebra of $A$,\\
$(ii)$ $\lambda_{X}$ is the Borel probability measure obtained by restricting the 
trace $\tau$ on $C(X)$,\\
and\\
$(iii)$ $\eta$ is the measure on $X\times X$ and\\
$(iv)$ $m$ is the multiplicity function,\\
both obtained from the direct integral decomposition of $L^{2}(M)$, so that 
$\A$ is the algebra
of diagonalizable operators with respect to this decomposition; 
the equivalence $\sim_{m.m}$ being,
\begin{align}
\nonumber(X,\lambda_{X},[\eta_{\mid\Delta(X)^{c}}],m_{\mid\Delta(X)^{c}})\sim_{m.m}(Y,\lambda_{Y},[\eta_{\mid\Delta(Y)^{c}}],m_{\mid\Delta(Y)^{c}})
\end{align}
if and only if, there exists a Borel isomorphism $F:X\rightarrow Y$, such
that
\begin{align}
\nonumber & F_{*}\lambda_{X}=\lambda_{Y},\\
\nonumber & (F\times F)_{*}[\eta_{\mid\Delta(X)^{c}}]=[\eta_{\mid\Delta(Y)^{c}}],\\
\nonumber & m_{\mid\Delta(X)^{c}}\circ(F\times
F)^{-1}=m_{\mid\Delta(Y)^{c}}, \text{ }\eta_{\mid\Delta(Y)^{c}} \text{ a.e.}
\end{align}
\end{Definition}

\indent It is easy to see that the Puk\'{a}nszky invariant of
$A\subset M$ is the set of essential values of the multiplicity
function in Definition~\ref{mminv}. The measure class
$[\eta_{\mid\Delta(X)^{c}}]$ is said to be the
\textbf{\emph{left--right measure}} of $A$. Both $m.m(\cdot)$ and $Puk(\cdot)$
are invariants of the masa under automorphisms of the factor $M$. In
this paper, we are mostly interested in the
\emph{left--right measure}.\\
\indent To understand the relation between  mixing properties of masas
and their left--right measures, disintegration of measures will be
used (for a detailed account of disintegration, consult \cite{PC}). Let $T$ be a
measurable map from $(X,\sigma_{X})$ to $(Y,\sigma_{Y})$, where
$\sigma_{X},\sigma_{Y}$ are $\sigma$--algebras of subsets of $X,Y$
respectively. Let $\beta$ be a $\sigma$--finite measure on
$\sigma_{X}$ and $\mu$ a $\sigma$--finite measure on $\sigma_{Y}$.
Here $\beta$ is the measure to be disintegrated and $\mu$ is often
the push forward measure $T_{*}\beta$, although other possibilities
for $\mu$ are allowed.
\begin{Definition}\label{definition_of_disintegration}
We say that $\beta$ has a \emph{disintegration} $\{\beta^{t}\}_{t\in Y}$
with respect to $T$ and $\mu$ or a $(T,\mu)$--disintegration if:\\
$(i)$ $\beta^{t}$ is a $\sigma$--finite measure on $\sigma_{X}$
concentrated on $\{T=t\}$ $($or $T^{-1}\{t\})$, i.e.,
$\beta^{t}(\{T\neq t\})=0$ for
$\mu$--almost all $t$,\\
and, for each nonnegative $\sigma_{X}$--measurable function $f$ on $X$:\\
$(ii)$ $t\mapsto \beta^{t}(f)$ is $\sigma_{Y}$--measurable.\\
$(iii)$
$\beta(f)=\mu^{t}(\beta^{t}(f))\overset{\text{defn}}=\int_{Y}\beta^{t}(f)d\mu(t)$.
\end{Definition}

\indent If $\beta$ in Definition~\ref{definition_of_disintegration} is a complex
measure, then a disintegration of $\beta$ is obtained by first
decomposing it into a linear combination of four positive measures,
using the Hahn decomposition of its real and imaginary parts.     Given a measure $\lambda$ on $X$ and coordinate projections $\pi_i: X \times X \rightarrow X$, $i=1,2,$ we will index the $(\pi_1, \lambda)$-- and $(\pi_2, \lambda)$--disintegrations of a measure on $X \times X$ using superscripts of $t$ and $s$, respectively.  In particular, we will make use of the disintegrations $\{\eta_\zeta^t\}_t$ and $\{\eta_\zeta^s\}_s$ of the measures $\eta_{\zeta_1,\zeta_2}$ defined by equation  \eqref{measure_from_kappa};  these disintegrations are known to exist by \cite[Theorem 3.2]{Muk1} (see also \cite{PC}).  Note that the measure $\eta_\zeta^t$ (respectively, $\eta_\zeta^s$) is concentrated on  $\{t\}\times X$ (respectively, $X\times \{s\}$) for $\lambda$-almost every $t$ (respectively, $\lambda$-almost every $s$).  Denote the restriction of $\eta_\zeta^t$ to $\sett{t} \times X$ by $\tilde{\eta}_\zeta^t$, and the restriction of $\eta_\zeta^s$ to $X \times \sett{s}$ by $\tilde{\eta}_\zeta^s.$

\indent The left--right measure 
of the masa $A$ has the property that if $\theta:X\times X\rightarrow
X\times X$ is the flip map  $\theta(t,s)=(s,t)$, then
$\theta_{*}\eta\ll\eta\ll\theta_{*}\eta$ \cite[Lemma 2.9]{Muk1}. In fact,
it is possible to obtain a choice of $\eta$ for which
$\theta_{*}\eta=\eta$.  Therefore, in most of the following, we will only
state or prove results with respect to the $(\pi_{1},\lambda)$--disintegration; analogous results with respect to the $(\pi_{2},\lambda)$--disintegration are 
also possible. The following lemma, a proof of which can be found in \cite[\S6]{Muk1} and \cite{Muk2},  will be crucial for our purposes.

\begin{Lemma}\label{identify_disintegrated_measure}
Let $\zeta_{1},\zeta_{2}\in L^{2}(M)$ be such that
$\mathbb{E}_{A}(\zeta_{1})=0=\mathbb{E}_{A}(\zeta_{2})$. Let
$\eta_{\zeta_{1},\zeta_{2}}$ denote the Borel measure on
$X\times X$ defined in equation \eqref{measure_from_kappa}.\\
$1^{\circ}$. Then $\eta_{\zeta_{1},\zeta_{2}}$ admits
$(\pi_{i},\lambda)$--disintegrations $X\ni
t\mapsto\eta_{\zeta_{1},\zeta_{2}}^{t}$ and $X\ni
s\mapsto\eta_{\zeta_{1},\zeta_{2}}^{s}$.
Moreover,
\begin{align}
\nonumber \eta_{\zeta_{1},\zeta_{2}}^{t}(X\times
X)=\mathbb{E}_{A}(\zeta_{1}\zeta_{2}^{*})(t), \text{ }\lambda
\text{ a.e.}
\end{align}
$2^{\circ}$. Let $f\in C(X)$. Then the functions $X\ni
t\mapsto \eta_{\zeta_{1},\zeta_{2}}^{t}(1\otimes f), X\ni
s\mapsto \eta_{\zeta_{1},\zeta_{2}}^{s}(f\otimes 1)$ are in
$L^{1}(X,\lambda)$. If $\zeta_{i}\in M$ for $i=1,2$, then $X\ni t\mapsto
\eta_{\zeta_{1},\zeta_{2}}^{t}(1\otimes f), X\ni s\mapsto
\eta_{\zeta_{1},\zeta_{2}}^{s}(f\otimes 1)$ are in
$L^{\infty}(X,\lambda)$.\\
$3^{\circ}$. Let $b,w\in C(X)$. If $\mathbb{E}_{A}(\zeta_{1}
w\zeta_{2}^{*})\in L^{2}(A)$, then
\begin{align}
\nonumber &\norm{\mathbb{E}_{A}(b\zeta_{1} w\zeta_{2}
^{*})}_{2}^{2}=\int_{X}\abs{b(t)}^{2}\abs{{\eta_{\zeta_{1},\zeta_{2}}^{t}(1\otimes
w)}}^{2}d\lambda(t),\\
\nonumber &\norm{\mathbb{E}_{A}(b\zeta_{1} w\zeta_{2}
^{*})}_{1}=\int_{X}\abs{b(t)}\abs{{\eta_{\zeta_{1},\zeta_{2}}^{t}(1\otimes
w)}}d\lambda(t).
\nonumber
\end{align}
\end{Lemma}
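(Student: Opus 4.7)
The plan is to reduce the complex case to positive measures via the polarization identity \eqref{polarize}, apply the classical disintegration theorem on standard Borel spaces to each positive piece, and then pin down the disintegrations by testing against elementary tensors $a\otimes b$, $a,b\in C(X)$, which by Stone--Weierstrass span a dense subspace of $C(X\times X)$.

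For $1^{\circ}$, I would first treat a positive $\eta_\xi$ with $\xi\in L^{2}(M)$. Evaluating $\kappa_\xi$ on $a\otimes 1$ and using traciality yields
\begin{align*}
\int_X a\,d(\pi_1)_*\eta_\xi \;=\; \langle a\xi,\xi\rangle \;=\; \tau\!\left(a\,\mathbb{E}_A(\xi\xi^*)\right),\qquad a\in C(X),
\end{align*}
so $(\pi_1)_*\eta_\xi = \mathbb{E}_A(\xi\xi^*)\,d\lambda \ll \lambda$ with density in $L^{1}(A)_{+}$. Since $X$ and $X\times X$ are standard Borel, the classical disintegration theorem supplies a $(\pi_1,\lambda)$-disintegration $\{\eta_\xi^t\}_t$, each $\eta_\xi^t$ a positive finite measure concentrated on $\{t\}\times X$. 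Applying \eqref{polarize} fiberwise defines the candidate $\eta_{\zeta_1,\zeta_2}^t$ as a linear combination of four such positive disintegrations. Testing against $a\otimes 1$,
\begin{align*}
\int_X a(t)\,\eta_{\zeta_1,\zeta_2}^t(X\times X)\,d\lambda(t) \;=\; \kappa_{\zeta_1,\zeta_2}(a\otimes 1) \;=\; \tau\!\left(a\,\mathbb{E}_A(\zeta_1\zeta_2^*)\right),
\end{align*}
so letting $a$ range over $C(X)$ gives the mass formula $\eta_{\zeta_1,\zeta_2}^t(X\times X)=\mathbb{E}_A(\zeta_1\zeta_2^*)(t)$ $\lambda$-a.e.

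For $2^{\circ}$, measurability comes for free from the disintegration theorem; the $L^{1}$ (respectively $L^{\infty}$) integrability follows from the positive-case bound
\begin{align*}
|\eta_\xi^t(1\otimes f)| \;\leq\; \|f\|_\infty\,\eta_\xi^t(X\times X) \;=\; \|f\|_\infty\,\mathbb{E}_A(\xi\xi^*)(t),
\end{align*}
which lies in $L^{1}(X,\lambda)$ in general and in $L^{\infty}$ when $\xi\in M$ (as then $\mathbb{E}_A(\xi\xi^*)\in A$); combining four such estimates via \eqref{polarize} covers the complex case. For $3^{\circ}$, I would run the same testing argument with $a\otimes w$ in place of $a\otimes 1$ to obtain
\begin{align*}
\eta_{\zeta_1,\zeta_2}^t(1\otimes w) \;=\; \mathbb{E}_A(\zeta_1 w\zeta_2^*)(t)\qquad \lambda\text{-a.e.}
\end{align*}
The assumption $\mathbb{E}_A(\zeta_1 w\zeta_2^*)\in L^{2}(A)$ legitimizes squaring pointwise, and $\mathbb{E}_A$-bimodularity over $A\supset C(X)$ gives $\mathbb{E}_A(b\zeta_1 w\zeta_2^*)=b\,\mathbb{E}_A(\zeta_1 w\zeta_2^*)$, yielding the claimed $L^{2}$ identity at once; the $L^{1}$ formula follows from the same calculation with exponent one.

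The step requiring the most care is $1^\circ$: one must verify that the linear combination of the four positive disintegrations produced by \eqref{polarize} really satisfies Definition~\ref{definition_of_disintegration}, i.e.\ concentration on $\{t\}\times X$ and the Fubini-type integral formula descend through the linear combination, and the resulting complex-valued $\eta_{\zeta_1,\zeta_2}^t$ is independent of the polarization chosen. The independence I would obtain from the uniqueness clause of the disintegration theorem together with density of $C(X)\otimes C(X)$ in $C(X\times X)$, which pins down $\eta_{\zeta_1,\zeta_2}^t$ $\lambda$-a.e.\ by its integrals against elementary tensors. Beyond this bookkeeping the argument is a direct unwinding of \eqref{measure_from_kappa} via the trace property.
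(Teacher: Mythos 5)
Your argument is correct and follows the same standard route as the cited sources: the paper itself does not prove this lemma but defers to \cite[\S 3, \S 6]{Muk1} and \cite{Muk2}, where the proof is exactly the combination you describe --- identify $(\pi_1)_*\eta_\xi$ with $\mathbb{E}_A(\xi\xi^*)\,d\lambda$ via traciality, invoke the Chang--Pollard disintegration theorem for the positive measures $\eta_\xi$, polarize fiberwise, and pin everything down by testing against elementary tensors $a\otimes f$ with $a,f\in C(X)$. The only points needing the care you already flag (a.e.\ uniqueness of the positive disintegrations so the polarized fibers are well defined, and the fact that an $L^1(X,\lambda)$ function is determined by its pairings against $C(X)$) are handled correctly, so there is no gap.
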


\section{Weak Mixing and the Normalizing Algebra}

In this section, we use measure-theoretic techniques to establish several equivalent analytical
conditions that characterize the normalizing algebra of a masa.  These results, along with those in \S5, highlight the relations between mixing, weak mixing of masas and Fourier coefficients 
of mixing and non--atomic measures. The main ingredients in the characterization are 
Theorems $5.5$, $6.6$ of \cite{Muk1} and the following result of Wiener. 

\begin{Theorem}\label{wiener}$($Wiener$)$
Let $\mu$ be a finite Borel measure on $S^{1}$ and let $\widehat{\mu}(n)$, $n\in \mathbb{Z}$, be its Fourier coefficients, i.e., $\widehat{\mu}(n)=\int_{S^{1}}t^{n}d\mu(t)$, $n\in \mathbb{Z}$. Then
\begin{align}
\nonumber \lim_{N\rightarrow\infty}\frac{1}{2N+1}\sum_{n=-N}^{N}\abs{\widehat{\mu}(n)}^{2}=\underset{t\in S^{1}: \mu(\{t\})\neq 0}\sum \text{ }\mu(\{t\})^{2}.
\end{align}
\end{Theorem}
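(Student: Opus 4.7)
The plan is to reduce the Ces\`aro average to a single integral on $S^{1}\times S^{1}$ against $\mu\times\mu$, identify the pointwise limit of the resulting kernel as the indicator of the diagonal, and then invoke the dominated convergence theorem.

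First, using that $\overline{t^{n}}=t^{-n}$ on $S^{1}$ together with Fubini, I would write
\begin{align*}
\abs{\widehat{\mu}(n)}^{2}=\widehat{\mu}(n)\overline{\widehat{\mu}(n)}=\int_{S^{1}\times S^{1}}(t\bar{s})^{n}\,d(\mu\times\mu)(t,s).
\end{align*}
Averaging over $-N\le n\le N$ and interchanging the finite sum with the integral gives
\begin{align*}
\frac{1}{2N+1}\sum_{n=-N}^{N}\abs{\widehat{\mu}(n)}^{2}=\int_{S^{1}\times S^{1}}K_{N}(t\bar{s})\,d(\mu\times\mu)(t,s),
\end{align*}
where $K_{N}(z):=\frac{1}{2N+1}\sum_{n=-N}^{N}z^{n}$ is the symmetric Dirichlet-type kernel.

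Next I would observe that $\abs{K_{N}(z)}\le 1$ uniformly in $N$ and in $z\in S^{1}$ (each summand has modulus one), that $K_{N}(1)=1$ for all $N$, and that for $z\in S^{1}\setminus\{1\}$ the geometric sum formula yields $K_{N}(z)=\frac{z^{N+1}-z^{-N}}{(2N+1)(z-1)}\to 0$ as $N\to\infty$. Therefore $K_{N}(t\bar{s})\to \mathbf{1}_{\Delta(S^{1})}(t,s)$ pointwise on $S^{1}\times S^{1}$, dominated by the constant $1$, which is $(\mu\times\mu)$-integrable since $\mu$ is finite. Dominated convergence then identifies the limit as $(\mu\times\mu)(\Delta(S^{1}))$, and since $\mu$ has at most countably many atoms, a further application of Fubini gives
\begin{align*}
(\mu\times\mu)(\Delta(S^{1}))=\int_{S^{1}}\mu(\{s\})\,d\mu(s)=\sum_{t\in S^{1}:\mu(\{t\})\neq 0}\mu(\{t\})^{2},
\end{align*}
which is the desired formula.

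There is no substantive obstacle. The only minor points needing care are the measurability of $(t,s)\mapsto K_{N}(t\bar{s})$ (immediate from continuity) and of the diagonal $\Delta(S^{1})$ (which follows from Hausdorffness of $S^{1}$). The essential structural step is the passage to the product space via Fubini, which converts the Ces\`aro--Fourier sum into a kernel whose pointwise asymptotics on $S^{1}\times S^{1}$ cleanly separate the diagonal contribution (the atomic part) from the vanishing off-diagonal contribution.
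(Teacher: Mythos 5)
Your proof is correct and is precisely the standard argument the paper alludes to when it says the result "is a direct consequence of the dominated convergence theorem": pass to $\mu\times\mu$ on $S^{1}\times S^{1}$, observe the averaged kernel is bounded by $1$ and tends pointwise to the indicator of the diagonal, and identify $(\mu\times\mu)(\Delta(S^{1}))$ with the sum of squared atom masses. All the details check out.
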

The proof is a direct consequence of the dominated convergence theorem \cite[Lemma 1.1]{Kre}.\\
\indent If $M$ is a fixed II$_1$ factor and $A \subset M$ a masa, let $\lambda$ denote the normalized Haar measure on $S^{1}$ so that
$A\cong L^{\infty}(S^{1},\lambda)$; then $\lambda$ is the tracial
measure. Let $[\eta]$ denote the left--right measure of $A$.  We assume
that $\eta$ is a probability measure on $S^{1}\times S^{1}$, with $\eta(\Delta(S^{1}))=0.$ 
Occasionally in subsequent sections it will be convenient instead to view $A$ as isomorphic to $L^\infty([0,1], \lambda)$, where $\lambda$ is Lebesgue measure on $[0,1]$ (so that $\eta$ would then be a probability on $[0,1] \times [0,1]$).  We will notify the reader in context of any such change.

\begin{Theorem}\label{tfae}
Let $A\subset M$ be a masa. Let $v\in A$ be the Haar unitary generator corresponding to the function 
$S^{1}\ni t\mapsto t\in S^{1}$. Let $x\in M$ be such that $\mathbb{E}_{A}(x)=0$.
Then the following are equivalent.
\begin{align}
\nonumber &(i)\text{  }\lim_{N\rightarrow\infty}\frac{1}{2N+1}\sum_{k=-N}^{N}\norm{\mathbb{E}_{A}(xv^{k}x^{*})}_{2}^{2}=0.\\
\nonumber &(i)^{\prime}\text{  }\lim_{N\rightarrow\infty}\frac{1}{2N+1}\sum_{k=-N}^{N}\norm{\mathbb{E}_{A}(xv^{k}x^{*})}_{2}=0.\\
\nonumber &(ii)\text{  }\lim_{N\rightarrow\infty}\frac{1}{2N+1}\sum_{k=-N}^{N}\norm{\mathbb{E}_{A}(xv^{k}x^{*})}_{1}^{2}=0.\\
\nonumber &(ii)^{\prime}\text{  }\lim_{N\rightarrow\infty}\frac{1}{2N+1}\sum_{k=-N}^{N}\norm{\mathbb{E}_{A}(xv^{k}x^{*})}_{1}=0.\\
\nonumber &(iii) \text{ Given any finite set } \{w_{i}\}_{i=1}^{k}\subset \mathcal{U}(A), \text{ there is a sequence of unitaries } u_{n}\in A, \text{ such that } \\
\nonumber & \indent\lim_{n\rightarrow\infty}\norm{\mathbb{E}_{A}(xw_{i}u_{n}x^{*})}_{2}=0  \text{ for all }1\leq i\leq  k.\\
\nonumber &(iii)^{\prime} \text{ Given any finite set } \{w_{i}\}_{i=1}^{k}\subset \mathcal{U}(A), \text{ there is a sequence of unitaries } u_{n}\in A, \text{ such that } \\
\nonumber & \indent\lim_{n\rightarrow\infty}\norm{\mathbb{E}_{A}(xw_{i}u_{n}x^{*})}_{1}=0 \text{ for all }1\leq i\leq  k.\\
\nonumber &(iv)\text{ }\mathbb{E}_{N(A)^{\prime\prime}}(x)=0.
\end{align}
\end{Theorem}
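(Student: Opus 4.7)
The central task is the equivalence (i) $\Leftrightarrow$ (iv); the remaining conditions then fall into place through standard norm inequalities and density-one subsequence extraction. I would begin by applying Lemma \ref{identify_disintegrated_measure}($3^\circ$) with $\zeta_1 = \zeta_2 = x$, $b \equiv 1$, and $w = v^k$ identified with the function $s \mapsto s^k$ in $C(S^1)$, to obtain the key identity
\begin{align*}
\|\mathbb{E}_A(xv^k x^*)\|_2^2 = \int_{S^1} \bigl|\tilde{\eta}_x^t(s \mapsto s^k)\bigr|^2 \, d\lambda(t).
\end{align*}
The integrand is the squared modulus of the $k$-th Fourier coefficient of the finite positive Borel measure $\tilde{\eta}_x^t$, whose total mass $\mathbb{E}_A(xx^*)(t)$ is uniformly bounded by $\|x\|^2$ for $\lambda$-a.e. $t$ by Lemma \ref{identify_disintegrated_measure}($1^\circ$).

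Applying Wiener's theorem (Theorem \ref{wiener}) fibrewise and swapping the limit with the integral via dominated convergence yields
\begin{align*}
\lim_{N \to \infty} \frac{1}{2N+1} \sum_{k=-N}^N \|\mathbb{E}_A(xv^k x^*)\|_2^2 = \int_{S^1} \sum_{s \in S^1} \tilde{\eta}_x^t(\{s\})^2 \, d\lambda(t),
\end{align*}
so that (i) is equivalent to the non-atomicity of $\tilde{\eta}_x^t$ for $\lambda$-almost every $t$. The bridge to (iv) is then supplied by the structural description of $N(A)''$ from \cite{Muk1}: atoms of $\tilde{\eta}_x^t$ that vary measurably with $t$ organize into graphs of measurable partial maps $S^1 \to S^1$, each corresponding via a measurable selection to a normalizing partial isometry of $A$, so that the atomic portion of the fibre measures encodes exactly the $L^2$-projection of $x$ onto $L^2(N(A)'') \ominus L^2(A)$. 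This identifies fibrewise non-atomicity with condition (iv).

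The remaining equivalences are considerably softer. The interpolation
\begin{align*}
\|\mathbb{E}_A(xv^k x^*)\|_1 \leq \|\mathbb{E}_A(xv^k x^*)\|_2, \qquad \|\mathbb{E}_A(xv^k x^*)\|_2^2 \leq \|x\|^2 \, \|\mathbb{E}_A(xv^k x^*)\|_1
\end{align*}
combined with Cauchy--Schwarz for Ces\`aro averages settles (i) $\Leftrightarrow$ (i)' $\Leftrightarrow$ (ii) $\Leftrightarrow$ (ii)' essentially line by line. For (iv) $\Rightarrow$ (iii), since $\mathbb{E}_{N(A)''}(xw_i) = \mathbb{E}_{N(A)''}(x)\, w_i = 0$ for each $w_i \in \mathcal{U}(A)$, condition (i) applies to every $xw_i$; extracting a common density-one subsequence $\{k_l\} \subset \mathbb{Z}$ along which each $\|\mathbb{E}_A(xw_i v^{k_l} x^*)\|_2 \to 0$ furnishes the desired unitaries $u_l := v^{k_l}$. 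Then (iii) $\Rightarrow$ (iii)' is immediate, and (iii)' $\Rightarrow$ (iv) goes by contrapositive: a nonzero $\mathbb{E}_{N(A)''}(x)$ forces the fibre measures to carry atoms concentrated on graphs, and a direct calculation shows that $\|\mathbb{E}_A(xu_n x^*)\|_1$ then cannot be made small by any choice of unitaries $u_n \in A$. The key obstacle is precisely the identification of atoms of $\tilde{\eta}_x^t$ with components of $x$ in $N(A)''$, which rests on the measurable selection underlying the structural results of \cite{Muk1}; every other step reduces to Wiener's theorem combined with soft analysis.
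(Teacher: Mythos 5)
Your overall architecture is essentially the paper's: reduce everything to fibrewise non-atomicity of $\tilde{\eta}_x^t$ via Lemma \ref{identify_disintegrated_measure}, apply Wiener's theorem (Theorem \ref{wiener}) under the integral by dominated convergence, and invoke the structural theorems of \cite{Muk1} (Theorems 5.5, 6.6) to identify atomic fibres with the component of $x$ in $N(A)''$. Your handling of the $\norm{\cdot}_1$ versus $\norm{\cdot}_2$ averaged conditions via the global interpolation $\norm{\mathbb{E}_A(xv^kx^*)}_2^2 \leq \norm{x}^2\,\norm{\mathbb{E}_A(xv^kx^*)}_1$ is actually cleaner than the paper's route, which proves the same interpolation fibrewise and then runs a subsequence extraction combined with a decomposition $x = y + z$, $y = \mathbb{E}_{N(A)''}(x)$. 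Two of your steps, however, are glossed over in a way that hides real work. First, in $(iv)\Rightarrow(iii)$, saying that ``condition $(i)$ applies to every $xw_i$'' controls only $\norm{\mathbb{E}_A(xw_iv^k w_i^*x^*)}_2 = \norm{\mathbb{E}_A(xv^kx^*)}_2$, not the mixed quantity $\norm{\mathbb{E}_A(xw_iv^kx^*)}_2$ that $(iii)$ requires. One must pass to the cross measures $\eta_{xw_i,x}$, use the polarization identity \eqref{polarize} together with the non-atomicity of both $\tilde{\eta}_{xw_i}^t$ and $\tilde{\eta}_x^t$ to get non-atomicity of $|\eta_{xw_i,x}^t|$, and then apply the complex-measure form of Wiener's theorem before extracting the density-one set. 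This is fixable within your framework, but it is not automatic.

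The genuine gap is in $(iii)'\Rightarrow(iv)$. Your contrapositive tests only the single quantity $\norm{\mathbb{E}_A(xu_nx^*)}_1$ (i.e., the case $w_1 = 1$), and the claim that atoms of $\tilde{\eta}_x^t$ obstruct making this small ``by a direct calculation'' does not hold at that level of generality: for a fixed unitary $u$, the contribution $u(s_0)\,\tilde{\eta}_x^t(\{s_0\})$ of an atom to $\tilde{\eta}_x^t(u)$ can in principle be cancelled by the continuous part of the measure, so a single sequence $u_n$ paired with $w=1$ gives no contradiction. The full strength of the finite sets in $(iii)'$ is essential: one takes $\{w_i\} = \{v^j : |j|\leq l\}$, chooses $u_l$ with $\norm{\mathbb{E}_A(xv^ju_lx^*)}_2 < 1/(l+1)$ for all $|j|\leq l$, diagonally extracts a subsequence $u_{l_p}$ so that $\int_{S^1} f(s)u_{l_p}(s)\,d\tilde{\eta}_x^t(s)\rightarrow 0$ for every $f\in C(S^1)$ and a.e.\ $t$, and only then tests against indicators of singletons, where $|u_{l_p}(s)| = 1$ forces $\bigl|\int 1_{\{s\}}u_{l_p}\,d\tilde{\eta}_x^t\bigr| = \tilde{\eta}_x^t(\{s\})$ with no possibility of cancellation. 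That is the paper's argument for $(iii)\Rightarrow(i)$, and it is the one step of the theorem that does not reduce to Wiener's theorem plus soft analysis; your proposal as written does not supply it.
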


\begin{proof}
That $(i)\Leftrightarrow (i)^{\prime}$, $(ii)\Leftrightarrow (ii)^{\prime}$ hold, 
follows from the fact that whenever $\{a_{k}\}_{k\in \mathbb{Z}}\subset \mathbb{C}$ is bounded, we have
\begin{align}\label{equilim}
\underset{N\rightarrow \infty}\lim\text{ }\frac{1}{2N+1}\sum_{k=-N}^{N}\abs{a_{k}}=0\text{ }\Leftrightarrow \underset{N\rightarrow \infty}\lim\text{ }\frac{1}{2N+1}\sum_{k=-N}^{N}\abs{a_{k}}^{2}=0. 
\end{align}
Again, $(iii)\Rightarrow (iii)^{\prime}$ and $(i)\Rightarrow (ii)$ are obvious as $\norm{\cdot}_{1}$ is dominated by $\norm{\cdot}_{2}$. Also, $(iii)^{\prime}\Rightarrow (iii)$ follows 
from Lemma \ref{identify_disintegrated_measure} after choosing $u_{n}\in C(S^{1})$ 
$($by making a density argument$)$ and passing to a subsequence.
$(iv)\Rightarrow (i)$, $(iv)\Rightarrow (ii)$ are direct consequences of Theorem $5.5$,  
Theorem $6.6$ and Theorem $6.9$ of \cite{Muk1}, after replacing the base space by $S^{1}$. 
So, we have to prove $(ii)\Rightarrow (iv)$ and 
$(iii)\Leftrightarrow (i)$. \\

$(ii)\Rightarrow (iv)$. Note that $(ii)^{\prime}$ holds. Write 
\begin{align*}
a_{N}= \frac{1}{2N+1}\sum_{k=-N}^{N}\norm{\mathbb{E}_{A}(xv^{k}x^{*})}_{1}, \text{ }
b_{N}=\frac{1}{2N+1}\sum_{k=-N}^{N}\norm{\mathbb{E}_{A}(xv^{k}x^{*})}_{2}^{2}, \forall 
N\in \mathbb{N}. 
\end{align*}
Note that $a_{N}$ and $b_{N}$, $N=1,2,\ldots$ define bounded sequences. There is a constant $C>0$ and a 
set $N_{0}$ with $\lambda(N_{0})=0$ such that $\abs{\eta_{x}^{t}(1\otimes v^{k})}\leq C$ for all 
$t\in N_{0}^{c}$ and all $k\in \mathbb{Z}$. 

We claim that given any subsequence $b_{N_{l}}$, there
is a further subsequence $b_{N_{{l}_{m}}}$ such that $b_{N_{{l}_{m}}}\rightarrow 0$ as $m\rightarrow
\infty$. Then $b_{N}$ would converge to $0$ as $N\rightarrow \infty$. Assume that the claim is true. 

Write $x=y+z$, where $y=\mathbb{E}_{N(A)^{\prime\prime}}(x)$ 
and $z=x-\mathbb{E}_{N(A)^{\prime\prime}}(x)$.
For $a,b\in A$, one has $\langle ayb,z\rangle=0$ and $\langle azb,y\rangle=0$, 
since $AyA\subseteq N(A)^{\prime\prime}$.
It follows that $\eta_{x}=\eta_{y}+\eta_{z}$. Hence, from Lemma 
$3.4$ of \cite{Muk1}, $\eta_{x}^{t}=\eta_{y}^{t}+\eta_{z}^{t}$ for $\lambda$ 
almost all $t$. Write 
\begin{align*}
c_{N}=\frac{1}{2N+1}\sum_{k=-N}^{N}\norm{\mathbb{E}_{A}(zv^{k}z^{*})}_{2}^{2}, \text{ }
 d_{N}=\frac{1}{2N+1}\sum_{k=-N}^{N}\norm{\mathbb{E}_{A}(yv^{k}y^{*})}_{2}^{2}, \forall 
N\in \mathbb{N}.
\end{align*}
Thus, from Lemma \ref{identify_disintegrated_measure},
\begin{align*}
\abs{b_{N}-d_{N}}
\leq&\text{ }\abs{c_{N}+\frac{2}{2N+1}\sum_{k=-N}^{N}\int_{S^{1}}\mathfrak{R}\left(\eta_{z}^{t}(1\otimes v^{k})\eta_{y}^{t}(1\otimes v^{-k})\right)d\lambda(t)}\\
\leq &\text{ }\text{ }c_{N}+ \frac{2}{2N+1}\sum_{k=-N}^{N}\int_{S^{1}}\abs{\eta_{z}^{t}(1\otimes v^{k})\eta_{y}^{t}(1\otimes v^{-k})}d\lambda(t)\\
\leq &\text{ }\text{ }c_{N}+\frac{2}{2N+1}\sum_{k=-N}^{N}\norm{\mathbb{E}_{A}(zv^{k}z^{*})}_{2}\norm{\mathbb{E}_{A}(yv^{k}y^{*})}_{2}\\
\leq &\text{ }\left (1+2\norm{y}^{2}\right)c_{N}.
\end{align*}

\indent Consequently as $\mathbb{E}_{N(A)^{\prime\prime}}(z)=0$, the hypothesis on $x$ and 
Theorem $6.9$ of \cite{Muk1} $($with $[0,1]$ replaced by $S^{1})$ force that
\begin{align}
\nonumber\underset{N}\limsup\text{ }d_{N}=0,
\end{align}
which is a contradiction to Wiener's theorem $($Theorem \ref{wiener}$)$ unless $y=0$, 
from Theorem $5.5$ \cite{Muk1}. Therefore, $y=0$. Thus, we only have to prove the claim.

Fix a subsequence $b_{N_{{l}}}$. Note that $a_{N_{{l}}}\rightarrow 0$ as $l\rightarrow \infty$. So
by Lemma \ref{identify_disintegrated_measure} it follows that
\begin{align*}
\int_{S^{1}}\frac{1}{2N_{{l}}+1}\sum_{k=-N_{{l}}}^{N_{{l}}} \abs{\eta_{x}^{t}(1\otimes v^{k})}d\lambda(t) \rightarrow 0, \text{ as }l\rightarrow \infty.
\end{align*}
Dropping to a subsequence $b_{N_{{l_{m}}}}$, replacing the null set $N_{0}$ by a $($probably$)$ larger null set if necessary and renaming it to be 
$N_{0}$ again, it follows that 
\begin{align*}
 \frac{1}{2N_{{l_{m}}}+1}\sum_{k=-N_{{l_{m}}}}^{N_{{l_{m}}}} \abs{\eta_{x}^{t}(1\otimes v^{k})}\rightarrow 0, \text{ as }m\rightarrow\infty\text{ for all }t\in N_{0}^{c}.
\end{align*}
Now for $t\in N_{0}^{c}$, 
\begin{align*}
 \frac{1}{2N_{{l_{m}}}+1}\sum_{k=-N_{{l_{m}}}}^{N_{{l_{m}}}} \abs{\eta_{x}^{t}(1\otimes v^{k})}^{2}\leq C \frac{1}{2N_{{l_{m}}}+1}\sum_{k=-N_{{l_{m}}}}^{N_{{l_{m}}}} \abs{\eta_{x}^{t}(1\otimes v^{k})}, \text{ for all }m.
\end{align*}
Direct application of dominated convergence theorem and Lemma \ref{identify_disintegrated_measure} 
shows that $b_{N_{l_{m}}}\rightarrow 0$ as $m\rightarrow \infty$.\\

\noindent $(iii)\Rightarrow (i)$. By making a density argument, we can assume that 
there is a sequence of unitaries $u_{n}\in C(S^{1})\subset A$, such that 
$\norm{\mathbb{E}_{A}(xw_{i}u_{n}x^{*})}_{2}\rightarrow 0$ as $n\rightarrow \infty$ 
for all $1\leq i\leq k$. We will only show that $\tilde{\eta}_{x}^{t}$ is non--atomic 
for $\lambda$ almost all $t$. Then, in view of Theorem $5.5$ and Theorem $6.6$ of 
\cite{Muk1}, the arguments are complete.\\
\indent For each $l\in \mathbb{N}$, choose a unitary $u_{l}\in C(S^{1})$, such that
\begin{align}
\nonumber \norm{\mathbb{E}_{A}(xv^{j}u_{l}x^{*})}_{2}< \frac{1}{l+1}, \text{ }j=0,\pm 1,\cdots,\pm l.
\end{align}
Lemma \ref{identify_disintegrated_measure} yields
\begin{align}
\nonumber
\norm{\mathbb{E}_{A}(xv^{j}u_{l}x^{*})}_{2}^{2}=\int_{S^{1}}\abs{\eta_{x}^{t}(1\otimes v^{j}u_{l})}^{2}d\lambda(t)<
\frac{1}{(l+1)^{2}},\text{ }j=0,\pm 1,\cdots,\pm l, \text{ for all }l.
\end{align}
\noindent Therefore,
\begin{align}
\nonumber
\underset{l}\lim\int_{S^{1}}\abs{\eta_{x}^{t}(1\otimes v^{j}u_{l})}^{2}d\lambda(t)=0, \text{ for }j=0,\pm 1,\pm 2, \cdots,\pm N, \text{ for all }N\in \mathbb{N}.
\end{align}
Using Cantor's diagonal argument, we may extract a subsequence $l_{p}<l_{p+1}$ for all $p$, and a set
$F\subset S^{1}$ with $\lambda(F)=0$, such that for all $t\in F^{c}$,
\begin{align}
\nonumber\underset{p}\lim\int_{S^{1}}s^{j}u_{l_{p}}(s)
d\tilde{\eta}_{x}^{t}(s)=0
\end{align}
for $j=0,\pm 1,\pm 2, \cdots$, and $\tilde{\eta}_{x}^{t}$ is a
finite measure $($see Lemma \ref{identify_disintegrated_measure}$)$. Consequently, by the
Stone--Weierstrass theorem, we have for all $f\in C(S^{1})$,
\begin{align}\label{cont}
\underset{p}\lim\int_{S^{1}}f(s)u_{l_{p}}(s)d\tilde{\eta}_{x}^{t}(s)=0, \text{ for }t\in F^{c}.
\end{align}
A further density argument establishes that
equation \eqref{cont} holds if $f$ is the indicator function of a compact set. It follows that
$\tilde{\eta}_{x}^{t}$ cannot have any atoms for $t\in F^{c}$.\\
\noindent $(i)\Rightarrow (iii)$. As $(i)\Leftrightarrow (iv)$ so $\mathbb{E}_{N(A)^{\prime\prime}}(x)=0$, and hence
$\mathbb{E}_{N(A)^{\prime\prime}}(xw_{i})=0$ for all $1\leq i\leq k$. Thus 
$\tilde{\eta}_{xw_{i}}^{t}$ and $\tilde{\eta}_{x}^{t}$ are non--atomic for all 
$1\leq i\leq k$ and for $\lambda$ almost all $t$. Use equation \eqref{polarize} to conclude that 
$\frac{1}{2N+1}\sum_{k=-N}^{N}\abs{\eta_{xw_{i},x}^{t}(1\otimes v^{k})}^{2}$ goes to zero as 
$N\rightarrow \infty$ almost everywhere $\lambda$ for all $1\leq i\leq k$. Now use Lemma 
\ref{identify_disintegrated_measure} to conclude that 
$\frac{1}{2N+1}\sum_{k=-N}^{N}\norm{\mathbb{E}_{A}(xw_{i}v^{k}x^{*})}_{2}^{2}\rightarrow 0$ for all $i$.
Thus, there is a set $S\subset \mathbb{Z}$ of density one such that $\norm{\mathbb{E}_{A}(xw_{i}v^{k}x^{*})}_{2}$ goes to zero as $\abs{k}\rightarrow \infty$ along $S$ \cite{Muk1}. This completes the proof.
\end{proof}

\begin{Remark}
\emph{Note that} \emph{$(iii)\Rightarrow (iv)$ in Theorem \ref{tfae}} \emph{is to be compared with Lemma 2.5 in\cite{Po1}}.
\end{Remark}

\begin{Corollary}$($Independence of coordinates$)$\label{incoord}
Let $A\subset M$ be a masa. Let $v\in A$ be the Haar unitary generator corresponding to the function 
$S^{1}\ni t\mapsto t\in S^{1}$. Let $x\in M$ be such that $\mathbb{E}_{A}(x)=0$. Then the following are equivalent.
\begin{align}
\nonumber &(i)\text{  }\lim_{N\rightarrow\infty}\frac{1}{2N+1}\sum_{k=-N}^{N}\norm{\mathbb{E}_{A}(xv^{k}x^{*})}_{2}^{2}=0.\\
\nonumber &(ii)\text{  }\lim_{N\rightarrow\infty}\frac{1}{2N+1}\sum_{k=-N}^{N}\norm{\mathbb{E}_{A}(xw^{k}x^{*})}_{2}^{2}=0, \text{ for any Haar unitary generator } w \text{ of }A.
\end{align}
\end{Corollary}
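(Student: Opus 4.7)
The plan is to deduce the corollary directly from the equivalence $(i)\Leftrightarrow(iv)$ in Theorem \ref{tfae}. The key observation is that condition $(iv)$, namely $\mathbb{E}_{N(A)^{\prime\prime}}(x)=0$, depends only on the inclusion $A\subset N(A)^{\prime\prime}\subset M$ and makes no reference to any particular Haar unitary generator. Thus if we can show that condition $(ii)$ of the corollary is also equivalent to $(iv)$, then $(i)$ and $(ii)$ are equivalent via this coordinate-free waypoint.

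First I would observe that Theorem \ref{tfae} was proved under the specific identification $A\cong L^{\infty}(S^{1},\lambda)$ sending the distinguished Haar unitary $v$ to the coordinate function $t\mapsto t$. However, the proof never used anything about $v$ beyond the fact that $v$ is a Haar unitary generator of $A$ and that we had a trace-preserving identification of $A$ with $L^{\infty}(S^{1},\lambda)$ sending $v$ to $t\mapsto t$. Given any Haar unitary generator $w$ of $A$, the spectral theorem applied to $w$ produces precisely such a trace-preserving identification $A\cong L^{\infty}(S^{1},\lambda)$ sending $w$ to the coordinate function $t\mapsto t$ (the spectral measure of $w$ equals $\lambda$ because $w$ is a Haar unitary). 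Under this new identification, the factor $M$, the masa $A$, the normalizing algebra $N(A)^{\prime\prime}$, and the vector $x$ are unchanged; only the concrete realization of $A$ as an $L^{\infty}$-space is different, with a correspondingly different left--right measure on $S^{1}\times S^{1}$.

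Applying Theorem \ref{tfae} in this new coordinate system (with $w$ playing the role formerly played by $v$), the equivalence $(i)\Leftrightarrow(iv)$ yields
\begin{align*}
\lim_{N\rightarrow\infty}\frac{1}{2N+1}\sum_{k=-N}^{N}\norm{\mathbb{E}_{A}(xw^{k}x^{*})}_{2}^{2}=0 \quad \Longleftrightarrow \quad \mathbb{E}_{N(A)^{\prime\prime}}(x)=0.
\end{align*}
Combining this with the original $(i)\Leftrightarrow(iv)$ gives the equivalence of $(i)$ and $(ii)$ in the corollary.

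I do not expect any serious obstacle: the entire content of the corollary is the coordinate-invariance of the intrinsic characterization $(iv)$ in Theorem \ref{tfae}. The only point that warrants a brief verification is that the proof of Theorem \ref{tfae} uses nothing about the base space beyond it being the spectrum of some Haar unitary generator of $A$ endowed with the tracial measure — which is precisely what any Haar unitary generator supplies.
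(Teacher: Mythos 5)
Your proposal is correct and is exactly the intended argument: the corollary is stated in the paper without proof precisely because it follows by routing both conditions through the coordinate-free characterization $(iv)$ of Theorem \ref{tfae}, applying that theorem to the trace-preserving identification $A\cong L^{\infty}(S^{1},\lambda)$ determined by each Haar unitary generator $w$. Your verification that the spectral measure of any Haar unitary generator is Haar measure, so that such an identification always exists, is the only point needing checking, and you handle it correctly.
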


\begin{Remark}
\emph{Note that $(i)$ in Theorem \ref{tfae} is false for any Haar unitary of $A$. 
There can be diffuse subalgebras inside $A$ with large normalizers. For example,
consider the masa $A(1)$ in \cite{SW}. The averaging 
conditions in Theorem \ref{tfae} are the analogues of weakly mixing actions. } 
\end{Remark}

\begin{Remark}
\emph{Rigidity and non--recurrence are two properties of dynamical 
systems that are in some sense opposite to each other \cite{Ber}. When translated to
the language of operator algebras, rigidity characterizes masas having non trivial central 
sequences, while non--recurrence is close to mixing. The sequences along which these properties
occur for weakly mixing transformations have rich structure \cite{Ber}. Thus, in view of
the results in \cite{Ber}, it is important to know more of the asymptotic properties of 
$\mathbb{E}_{A}(xv^{k}x^{*})$, $k\in \mathbb{Z}$. The operators $\mathbb{E}_{A}(xv^{k}x^{*})$, $k\in \mathbb{Z}$,
are directly related to Fourier coefficients of certain measures that characterize (weak) mixing and
rigidity $($see proof of Theorem \ref{action_sm}$)$.} 
\end{Remark}

\begin{Theorem}\label{CorChanged}
Let $A\subset M$ be a masa and let $v\in A$ be the Haar unitary generator of $A$ 
corresponding to the function $S^{1}\ni t\mapsto t\in S^{1}$. There is a subsequence 
$k_{l}$ $(k_{l}<k_{l+1})$ such that $\norm{\mathbb{E}_{A}(xv^{k_{l}}x^{*})}_{2}\rightarrow 0$ 
as $l\rightarrow \infty$ for all $x\in M$ such that $\mathbb{E}_{N(A)^{\prime\prime}}(x)=0$.\end{Theorem}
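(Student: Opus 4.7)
The plan is to combine the equivalence $(iv)\Leftrightarrow (i)$ of Theorem \ref{tfae} with a diagonalization over a countable $\norm{\cdot}_{2}$-dense subset of the unit ball of $\mathcal{K} := \{x\in M : \mathbb{E}_{N(A)^{\prime\prime}}(x) = 0\}$. By that theorem, every $x \in \mathcal{K}$ satisfies
\[
\frac{1}{2N+1}\sum_{k=-N}^{N}\norm{\mathbb{E}_{A}(xv^{k}x^{*})}_{2}^{2} \longrightarrow 0 \quad\text{as } N\to\infty.
\]
Since the individual summands are bounded by $\norm{x}^{4}$, a standard Markov-type argument turns this Ces\`aro condition into the assertion that, for every $\varepsilon>0$, the set $\{k\in\mathbb{Z} : \norm{\mathbb{E}_{A}(xv^{k}x^{*})}_{2} \geq \varepsilon\}$ has density zero in $\mathbb{Z}$.

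First I would use that $M$, being separable, has $\norm{\cdot}_{2}$-separable bounded subsets, and that $\mathcal{K}_{1} := \{x\in \mathcal{K} : \norm{x}\leq 1\}$ is $\norm{\cdot}_{2}$-closed inside the unit ball of $M$ (because $\mathbb{E}_{N(A)^{\prime\prime}}$ extends to a bounded projection on $L^{2}(M)$), in order to pick a countable $\norm{\cdot}_{2}$-dense sequence $\{x_{n}\}_{n\geq 1}\subset \mathcal{K}_{1}$. For each $l\in\mathbb{N}$ the set
\[
C_{l} := \bigcap_{m=1}^{l}\bigl\{k\in\mathbb{Z} : \norm{\mathbb{E}_{A}(x_{m}v^{k}x_{m}^{*})}_{2} < 1/l\bigr\}
\]
is a finite intersection of density-one subsets of $\mathbb{Z}$, hence itself has density one; in particular, $C_{l}$ is unbounded. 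Choosing $k_{l}\in C_{l}$ with $k_{l}>k_{l-1}$ inductively produces a strictly increasing sequence along which $\norm{\mathbb{E}_{A}(x_{m}v^{k_{l}}x_{m}^{*})}_{2}\to 0$ for every fixed $m$.

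Finally, to pass from $\{x_{n}\}$ to an arbitrary $x\in \mathcal{K}$, I would run a bilinear approximation. For $x\neq 0$ put $y = x/\norm{x}\in \mathcal{K}_{1}$ and pick $x_{n}$ with $\norm{y-x_{n}}_{2}$ small. The elementary bounds $\norm{\mathbb{E}_{A}(ab)}_{2}\leq \norm{a}_{2}\norm{b}$ and $\norm{\mathbb{E}_{A}(ab)}_{2}\leq \norm{a}\norm{b}_{2}$, together with $\norm{v^{k_{l}}}=1$, give
\[
\norm{\mathbb{E}_{A}(yv^{k_{l}}y^{*}) - \mathbb{E}_{A}(x_{n}v^{k_{l}}x_{n}^{*})}_{2} \leq 2\norm{y-x_{n}}_{2},
\]
so sending $l\to\infty$ and then $n\to\infty$ forces $\norm{\mathbb{E}_{A}(yv^{k_{l}}y^{*})}_{2}\to 0$, and then $\norm{\mathbb{E}_{A}(xv^{k_{l}}x^{*})}_{2}=\norm{x}^{2}\norm{\mathbb{E}_{A}(yv^{k_{l}}y^{*})}_{2}\to 0$ as well. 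The main obstacle is precisely the passage from the $x$-dependent density-one set produced by Theorem \ref{tfae} to a \emph{single} subsequence that serves uniformly for every $x\in \mathcal{K}$; the reduction to a countable dense subfamily via separability, together with the operator-norm bookkeeping in the bilinear approximation, is what makes this universal diagonalization work.
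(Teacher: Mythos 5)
Your proposal is correct and follows exactly the route the paper indicates for Theorem \ref{CorChanged}: the equivalence $(iv)\Leftrightarrow(i)$ of Theorem \ref{tfae}, separability of $M$ in $\norm{\cdot}_{2}$, and a diagonalization over a countable dense family, finished by the standard bilinear $\norm{\cdot}_{2}$--approximation. The paper omits these details, and your write-up (density-one sets via the Koopman--von Neumann trick, finite intersections remaining density one, and the $2\norm{y-x_{n}}_{2}$ estimate) supplies precisely the omitted argument.
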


\begin{proof}
The proof follows easily from Theorem \ref{tfae} by separability and a diagonalization
argument. We omit the details.
\end{proof}

\begin{Remark}
\emph{ When compared with the asymptotic homomorphism property $($AHP$)$ introduced in \cite{S-S3}, and Proposition $1.1$ of \cite{J-S}, we suspect that Theorem \ref{CorChanged} may be the best result along these lines that can be expected in general.  Moreover, by making an argument appealing to Corollary \ref{incoord} and Theorem \ref{tfae}, 
it can be shown that the statement in Theorem \ref{CorChanged} $($except possibly the subsequence$)$ is independent of the choice of Haar unitary generator. In other words, any singular masa `almost has the AHP' with respect to any choice of its Haar unitary generator.}
\end{Remark}

\section{Masas of Product Class}

A masa $A$ in a II$_1$ factor $M$ is said to be of product class if its left-right measure is the class of product measure.  This condition was shown in \cite{Muk2} to be equivalent to the condition that the space $L^{2}(M) \ominus L^2(A)$ decomposes as a direct sum of coarse $A$--$A$ bimodules.   Masas with this property were studied in detail in \cite{Muk2}, though in essence they have been known about for  some time.
In this section, we study masas of product class in the context of mixing properties of subalgebras. One of our main results in this section builds on those obtained by Jolissaint and Stalder in \cite{J-S}, in particular, \cite[Theorem 3.5]{J-S}.
Furthermore, in restricting ourselves to a smaller class of dynamical systems -- namely, the ones that arise from semidirect products of groups -- the absence of $\mathbb{Z}$--systems with strictly 
absolutely continuous spectrum will be a consequence of the analysis undertaken in \S4 and \S5.  We begin by recalling the following property of masas from \cite{Muk2}, which is closely related to masas of product class.

\begin{Definition}  \label{SU}   We say that a masa $A\subset M$ has the property (SU) if it satisfies the following conditions:  There exists a set $S\subset
M$ such that $\mathbb{E}_{A}(x)=0$ for all $x\in S$, and
the linear span of $S$ \text{ is dense in }$L^{2}(M)\ominus L^{2}(A)$; there is an  
orthonormal basis
$\{v_{n}\}_{n=1}^{\infty}\subset A$ of $L^{2}(A)$ such that
\begin{align}\label{unimix}
\sum_{n=1}^{\infty}\norm{\mathbb{E}_{A}(x
v_{n}x^{*})}_{2}^{2}<\infty \text{ for all }x\in S;
\end{align}
and there is a nonzero vector $\zeta\in L^{2}(M)\ominus
L^{2}(A)$ such that $\mathbb{E}_{A}(\zeta u^{n}\zeta^{*})=0$ for all
$n\neq 0$, where $u$ is a Haar unitary generator of $A$.
\end{Definition}

\indent There are many examples of masas in both the hyperfinite and free group factors known to satisfy $(\text{SU})$. In \cite{Muk2}, it was shown that any masa satisfying (SU) is also a masa of product class.  In the same work, it was shown that masas of product class satisfy a similar (but slightly weaker) set of conditions to those in (SU) \cite[Theorem $2.5$]{Muk2}.    In the analysis that follows,
we assume $A=L^{\infty}([0,1],\lambda)$. Note that the results below use Lemma \ref{Lemma:appendix} from the appendix.

\begin{Theorem}\label{summability_imply_mixing}
Let $A\subset M$ be a masa of product class. Then $A$ is a  mixing masa.
\end{Theorem}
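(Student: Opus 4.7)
The strategy is to reduce the mixing condition to a dominated-convergence argument on $[0,1]$, using the product-class hypothesis to put the left--right measure in an amenable form. Fix $x,y\in M$ with $\mathbb{E}_A(x)=0=\mathbb{E}_A(y)$ and a bounded sequence $a_n\in A$ converging to $0$ in the weak operator topology, say $\norm{a_n}\le C$. Since $\mathbb{E}_A(x)=\mathbb{E}_A(y^*)=0$, the positive measures $\eta_x$ and $\eta_{y^*}$ are concentrated on $\Delta([0,1])^c$. The product-class hypothesis therefore gives $\eta_x,\eta_{y^*}\ll \lambda\times \lambda$, and the polarization bound \eqref{abscont11} extends this to $\eta_{x,y^*}\ll \lambda\times \lambda$. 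Let $f_{x,y^*}\in L^1(\lambda\times\lambda)$ denote its Radon--Nikodym density. Fubini's theorem then identifies the $(\pi_1,\lambda)$-disintegration explicitly as $d\tilde{\eta}^{\,t}_{x,y^*}(s)=f_{x,y^*}(t,s)\,d\lambda(s)$ for $\lambda$-a.e.\ $t$.

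Next, I would invoke Lemma \ref{identify_disintegrated_measure} (extended from $C([0,1])$ to $L^\infty$ by a routine approximation argument using the uniform bound $\norm{a_n}\le C$) to obtain
\[
\norm{\mathbb{E}_A(x a_n y)}_1 \;=\; \int_0^1\!\left|\int_0^1 a_n(s)\,f_{x,y^*}(t,s)\,d\lambda(s)\right|\,d\lambda(t).
\]
Weak operator convergence of $a_n\to 0$ in $A$ amounts to weak-$\ast$ convergence in $L^\infty=(L^1)^\ast$. Since $f_{x,y^*}(t,\cdot)\in L^1(\lambda)$ for $\lambda$-almost every $t$, the inner integral tends pointwise to $0$, and it is dominated by $C\cdot g(t)$ where $g(t):=\norm{f_{x,y^*}(t,\cdot)}_1$ lies in $L^1(\lambda)$ by Fubini. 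Dominated convergence now yields $\norm{\mathbb{E}_A(xa_ny)}_1\to 0$.

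Finally, I would combine this with the elementary interpolation $\norm{h}_2^2\le \norm{h}\cdot\norm{h}_1$ (valid for $h\in M$) together with the uniform bound $\norm{\mathbb{E}_A(xa_ny)}\le \norm{x}\,\norm{a_n}\,\norm{y}\le C\norm{x}\norm{y}$ to conclude $\norm{\mathbb{E}_A(xa_ny)}_2\to 0$, which is precisely Definition \ref{sm}. I anticipate that the only mildly technical step is the extension of Lemma \ref{identify_disintegrated_measure} to $L^\infty$ test functions; the essence of the argument is that the product structure plus Fubini converts the question into pointwise convergence of fibrewise integrals against an $L^1$ density, after which DCT finishes in the $\norm{\cdot}_1$ norm before one interpolates back to $\norm{\cdot}_2$.
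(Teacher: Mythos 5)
Your proof is correct, but it takes a genuinely different route from the paper's. The paper's argument goes through the structure theory of product-class masas: it invokes Theorem 2.5 of \cite{Muk2} to produce a dense set $S$ of vectors satisfying the summability condition $\sum_{n}\norm{\mathbb{E}_{A}(\zeta v^{n}\zeta^{*})}_{2}^{2}<\infty$ with essentially bounded densities $d\eta_{\zeta}/d(\lambda\otimes\lambda)$, upgrades this to two variables via the appendix Lemma \ref{Lemma:appendix}, deduces $\norm{\mathbb{E}_{A}(\xi_{1}a_{n}\xi_{2}^{*})}_{2}\rightarrow 0$ for $\xi_{1},\xi_{2}$ in that dense set by the argument of \cite[\S 11.4]{S-S2}, and then runs two density arguments (controlled by the bound $\norm{\mathbb{E}_{A}(\xi\xi^{*})}^{1/2}\norm{\zeta_{k}-x}_{2}$) to reach arbitrary $x,y$. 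You instead treat arbitrary $x,y$ directly: absolute continuity $\eta_{x,y^{*}}\ll\lambda\otimes\lambda$ (from Lemma 5.7 of \cite{DSS} together with the polarization bound \eqref{abscont11}), Fubini to identify the fibre measures with the slices of the density, weak-$*$ convergence of the bounded weakly null sequence $a_{n}$ against the $L^{1}$ fibre densities, dominated convergence in $\norm{\cdot}_{1}$, and finally the interpolation $\norm{h}_{2}^{2}\leq\norm{h}\,\norm{h}_{1}$ combined with the uniform operator-norm bound on $\mathbb{E}_{A}(xa_{n}y)$. This is essentially the mechanism of the paper's own proof of Theorem \ref{aeconv} (which establishes a.e.\ convergence), adapted to yield norm convergence; it bypasses the summability machinery, the appendix lemma, and the omitted density arguments entirely, and, as the paper itself remarks after Theorem \ref{aeconv}, it uses only $\eta\ll\lambda\otimes\lambda$ rather than equivalence to the product class. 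The one step you flag as technical --- extending Lemma \ref{identify_disintegrated_measure} to $L^{\infty}$ test functions --- is exactly the point addressed in the discussion preceding Theorem \ref{aeconv}: under absolute continuity the fibrewise integrals against $L^{\infty}$ functions are well defined and measurable, so that step is indeed routine here.
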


\begin{proof}
In Theorem 2.5 \cite{Muk2}, it was shown that the hypothesis implies the
following. There is a set $S\subset L^{2}(M)\ominus L^{2}(A)$ such
that $\text{ span }S$ is dense in $L^{2}(M)\ominus L^{2}(A)$,
\begin{align}
\nonumber \sum_{n\in \mathbb{Z}}\norm{\mathbb{E}_{A}(\zeta
v^{n}\zeta^{*})}_{2}^{2}<\infty \text{ for all }\zeta\in S,
\end{align}
and there is a nonzero $\xi_{0}\in L^{2}(M)\ominus L^{2}(A)$
such that $\mathbb{E}_{A}(\xi_{0} v^{n}\xi_{0}^{*})=0$ for all $n\neq 0$,
where $v$ is the Haar unitary generator of $A$ corresponding to the function 
$[0,1]\ni t\mapsto e^{2\pi it}$. Furthermore, the proof of the same theorem 
shows that $S$ can be chosen so that $\frac{d\eta_{\zeta}}{d(\lambda\otimes\lambda)}$ 
is essentially bounded for $\zeta\in S$. \\
\indent Use Lemma \ref{Lemma:appendix} and Remark \ref{Rem:append} to conclude that 
there is a dense subset $S^{\prime}\subset L^{2}(M)\ominus L^{2}(A)$ such that
\begin{align}
\nonumber \sum_{n\in \mathbb{Z}}\norm{\mathbb{E}_{A}(\xi_{1}
v^{n}\xi_{2}^{*})}_{2}^{2}<\infty \text{ for all }\xi_{1},\xi_{2}\in S^{\prime},
\end{align}
and $\frac{d\eta_{\xi}}{d(\lambda\otimes \lambda)}$ is essentially bounded 
for all $\xi\in S^{\prime}$. In the above statements, it is implicit that the vectors 
$\zeta$, $\xi_{i}$, $i=1,2$, are such that $\mathbb{E}_{A}(\zeta
v^{n}\zeta^{*}),\mathbb{E}_{A}(\xi_{1}v^{n}\xi_{2}^{*})\in L^{2}(A)$; thus there is 
no confusion in considering their $L^{2}$--norms. \\
\indent Making arguments as in \cite[Section 11.4]{S-S2}, it is easy to see
that if $\{a_{n}\}\subset A$ is a bounded weakly null sequence of operators,
then $\mathbb{E}_{A}(\xi_{1} a_{n}\xi_{2}^{*})\rightarrow 0$ in
$\norm{\cdot}_{2}$ for all $\xi_{1},\xi_{2}\in S^{\prime}$.\\
\indent Fix $x\in M$ such that $\mathbb{E}_{A}(x)=0$. Also fix $\xi\in S^{\prime}$ 
and a weakly null sequence of operators $\{a_{n}\}\subset A$ in the unit ball. 
Choose a sequence of vectors $\zeta_{k}\in S^{\prime}$ such that
$\zeta_{k}\rightarrow x$ in $\norm{\cdot}_{2}$. For $\xi\in S^{\prime}$, from  
Lemma \ref{identify_disintegrated_measure} we have 
$\mathbb{E}_{A}(\xi\xi^{*})(t)=\eta_{\xi}^{t}([0,1]\times [0,1])=\int_{0}^{1}f_{\xi}(t,s)d\lambda(s)$ for
$\lambda$ almost all $t$, where $f_{\xi}=\frac{d\eta_{\xi}}{d(\lambda\otimes \lambda)}$. 
Since $f_{\xi}$ is essentially bounded, it follows from Lemma $3.6$ \cite{Muk1} that $\mathbb{E}_{A}(\xi\xi^{*})\in L^{\infty}([0,1],\lambda)$.
Thus, for $n\in \mathbb{N}$,
\begin{align}
\nonumber \underset{a\in C[0,1]:\norm{a}_{2}\leq1}\sup
\abs{\int_{0}^{1}a(t)\mathbb{E}_{A}((\zeta_{k}-x)a_{n}\xi^{*})(t)d\lambda(t)}&=\underset{a\in C[0,1]:\norm{a}_{2}\leq1}{\sup}\abs{\tau((\zeta_{k}-x)a_{n}\xi^{*}a)}\\
\nonumber &=\underset{a\in C[0,1]:\norm{a}_{2}\leq1}\sup\abs{\langle (\zeta_{k}-x)a_{n},a^{*}\xi\rangle}\\
\nonumber &\leq\norm{\zeta_{k}-x}_{2}\underset{a\in C[0,1]:\norm{a}_{2}\leq1}\sup \langle a^{*}\xi,a^{*}\xi\rangle^{\frac{1}{2}}\\
\nonumber &=\norm{\zeta_{k}-x}_{2}\underset{a\in C[0,1]:\norm{a}_{2}\leq1}\sup \tau(a^{*}\mathbb{E}_{A}(\xi\xi^{*})a)^{\frac{1}{2}}\\
\nonumber &\leq \norm{\mathbb{E}_{A}(\xi\xi^{*})}^{\frac{1}{2}}\norm{\zeta_{k}-x}_{2}.
\end{align}
This shows that $\mathbb{E}_{A}((\zeta_{k}-x)a_{n}\xi^{*})\in L^{2}(A)$ for all $k,n$, and
a triangle inequality argument shows that $\mathbb{E}_{A}(xa_{n}\xi^{*})\rightarrow 0$ 
in $\norm{\cdot}_{2}$. Make a further density argument to finish the proof. 
We omit the details.
\end{proof}

\begin{Remark}
\emph{A similar argument along with Theorem 3.1 \cite{SSLap} gives a proof of the 
fact that the radial $($Laplacian$)$ masa in $L(\mathbb{F}_{k})$, $2\leq k<\infty$ is
mixing. The same can be deduced as a corollary of Theorem \ref{summability_imply_mixing}, 
as the left--right measure of the radial masa is the class of product measure \cite{DyMu}.}
\end{Remark}

The next result shows that masas of product class can in fact be compared directly with mixing masas, and possess far stronger convergence properties.  

\begin{Theorem}\label{aeconv}
Let $A\subset M$ be a masa. Suppose the left--right measure of $A$ is
the class of product measure. Let $x,y\in M$ be such that
$\mathbb{E}_{A}(x)=0=\mathbb{E}_{A}(y)$. If $(u_{n})$ is a
bounded sequence in $A$ converging to zero in the weak operator topology, then
$\mathbb{E}_{A}(xu_{n}y^{*})$ converges to zero $\lambda$ almost
everywhere.
\end{Theorem}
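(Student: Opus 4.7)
The plan is to exploit the product-class hypothesis to represent $\eta_{x,y}$ as an $L^1$ density against $\lambda\otimes\lambda$, and then reduce the desired fiberwise convergence to a weak-$*$ convergence statement for a bounded sequence in $L^\infty$.

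First I would show that $\eta_{x,y}\ll\lambda\otimes\lambda$. Since $\mathbb{E}_A(x)=0=\mathbb{E}_A(y)$, the measures $\eta_x$ and $\eta_y$ vanish on the diagonal $\Delta([0,1])$ (as the part of $\eta_\zeta$ sitting on $\Delta$ corresponds to $\mathbb{E}_A(\zeta)$), and the polarization bound \eqref{abscont11} then forces $\eta_{x,y}(\Delta([0,1]))=0$. By hypothesis, the left--right measure class (defined off the diagonal in Definition~\ref{mminv}) is that of $\lambda\otimes\lambda$, so $\eta_{x,y}$ is absolutely continuous with respect to $\lambda\otimes\lambda$. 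Write $\eta_{x,y}=f_{x,y}\cdot(\lambda\otimes\lambda)$ with $f_{x,y}\in L^1(\lambda\otimes\lambda)$.

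Next, I would identify the $(\pi_1,\lambda)$-disintegration. By uniqueness of disintegration, for $\lambda$-almost every $t$ the fiber measure $\tilde\eta_{x,y}^t$ on $[0,1]$ is given by $d\tilde\eta_{x,y}^t(s)=f_{x,y}(t,s)\,d\lambda(s)$. Lemma~\ref{identify_disintegrated_measure} then yields, for $\lambda$-a.e.\ $t$,
\begin{align*}
\mathbb{E}_A(xu_ny^*)(t)=\eta_{x,y}^t(1\otimes u_n)=\int_0^1 u_n(s)\,f_{x,y}(t,s)\,d\lambda(s).
\end{align*}

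Finally I would apply weak-$*$ convergence. By Fubini's theorem, $f_{x,y}(t,\cdot)\in L^1([0,1],\lambda)$ for $\lambda$-almost every $t$. The bounded sequence $(u_n)\subset A=L^\infty([0,1],\lambda)$ is $w.o.t.$-null as operators on $L^2(M)$, hence pairs to zero with every vector of the form $\xi\bar\eta$ for $\xi,\eta\in L^2(A)$; since every $g\in L^1([0,1],\lambda)$ factors as such a product, $u_n\to 0$ weak-$*$ in $L^\infty=(L^1)^*$. Therefore the integral above tends to $0$ for every $t$ such that $f_{x,y}(t,\cdot)\in L^1$, giving $\mathbb{E}_A(xu_ny^*)(t)\to 0$ for $\lambda$-a.e.\ $t$.

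The step requiring the most care is the absolute continuity of $\eta_{x,y}$ (needing both the product-class hypothesis away from the diagonal and the vanishing of $\eta_{x,y}$ on the diagonal under the conditional-expectation hypothesis) together with the identification of the fiber measures with the sections $f_{x,y}(t,\cdot)$; after that, the result is essentially just a translation of weak operator convergence into the dual pairing with $L^1$.
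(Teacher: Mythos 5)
Your proposal is correct and follows essentially the same route as the paper: absolute continuity of the relevant measure with respect to $\lambda\otimes\lambda$ from the product-class hypothesis, identification of the fibers of the $(\pi_1,\lambda)$-disintegration with the sections of the $L^1$ density, the identity $\mathbb{E}_A(xu_ny^*)(t)=\eta_{x,y}^t(1\otimes u_n)$, and the observation that a bounded w.o.t.-null sequence in $A$ pairs to zero against every $L^1$ function. The only cosmetic difference is that you polarize at the level of measures at the outset (via \eqref{abscont11}) to handle $\eta_{x,y}$ directly, whereas the paper treats the diagonal case $y=x$ first and applies the operator-level polarization identity at the very end.
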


Before we prove Theorem~\ref{aeconv}, we need to make an observation.
Let $x\in M$ be such that $\mathbb{E}_{A}(x)=0$. In the results of the third author in \cite{Muk1,Muk2}
that involved disintegration of measures, it was necessary to work with functions of the form
$[0,1]\ni t\mapsto \eta_{x}^{t}(1\otimes a)$, where $a\in
C[0,1]\subset A$ $($or $a\in C(S^{1})\subset A$ as the case may be$)$. The reason for the choice of $a\in C[0,1]$ $($or $a\in C(S^{1}))$ in that work was to ensure that the function $[0,1]\ni
t\mapsto \eta_{x}^{t}(1\otimes a)$ was finite almost everywhere and measurable. However, if $[\eta]=[\lambda\otimes \lambda]$, then we can allow $a$ to be in
$L^{\infty}([0,1],\lambda)$ $($or $L^{\infty}(S^{1},\lambda))$. In this
case, the aforementioned finiteness and measurability are not issues.

\begin{proof}[Proof of Theorem~\ref{aeconv}]
First, fix $x\in M$ with $\mathbb{E}_{A}(x)=0$. Note that
$\eta_{x}\ll \lambda\otimes \lambda$ \cite[Lemma $5.7$]{DSS}. Let
$g=\frac{d\eta_{x}}{d(\lambda\otimes \lambda)}$. Then $g\in
L^{1}(\lambda\otimes \lambda)$. From Lemma 3.6 \cite{Muk1},
$\tilde{\eta}_{x}^{t}\ll\lambda$ and
$\frac{d\tilde{\eta}_{x}^{t}}{d\lambda}=g_{t}$ for $\lambda$ almost
all $t$, where $g_{t}= g(t,\cdot)$.\\
\indent It is easy to verify that, $[0,1]\ni t\mapsto
\eta_{x}^{t}(1\otimes u_{n})$ is in $L^{\infty}([0,1],\lambda)$ for
all $n$ $($use Lemma \ref{identify_disintegrated_measure}$)$. For $a\in C[0,1]$, the equation
\begin{align}\label{identfy_Fcoeff}
\langle \mathbb{E}_{A}(xu_{n}x^{*}),a\rangle=\tau(a^{*}\mathbb{E}_{A}(xu_{n}x^{*}))=\tau(a^{*}xu_{n}x^{*})=\int_{0}^{1}\overline{a(t)}\eta_{x}^{t}(1\otimes
u_{n})d\lambda(t)
\end{align}
implies that, $\mathbb{E}_{A}(xu_{n}x^{*})(t)=\eta_{x}^{t}(1\otimes
u_{n})$ for $\lambda$ almost all $t$. Thus, for $\lambda$ almost all
$t$ we have,
\begin{align}
\nonumber \mathbb{E}_{A}(xu_{n}x^{*})(t)&=\eta_{x}^{t}(1\otimes
u_{n})=\int_{0}^{1}u_{n}(s)g_{t}(s)d\lambda(s)\rightarrow 0 \text{
as }n\rightarrow \infty.
\end{align}
The last statement holds because $\{u_{n}\}$ is bounded, converges to zero in $w.o.t$ 
and $g_{t}\in L^{1}(\lambda)$ for almost all $t$.\\
\indent Finally use the identity,
\begin{align}
\nonumber 4\text{ }\mathbb{E}_{A}(xu_{n}y^{*})&=\mathbb{E}_{A}((x+y)u_{n}(x+y)^{*})-\mathbb{E}_{A}((x-y)u_{n}(x-y)^{*})\\
\nonumber &+ i\text{ }\mathbb{E}_{A}((x+iy)u_{n}(x+iy)^{*})-i\text{
}\mathbb{E}_{A}((x-iy)u_{n}(x-iy)^{*}), \text{ for all }n,
\end{align}
to complete the proof.
\end{proof}

\begin{Remark}
\emph{Observe that in Theorem~\ref{aeconv}, even if we assume only that the left-right 
measure is absolutely continuous with respect to the product measure, then 
the conclusions remain valid.}
\end{Remark}

The following lemma is likely to be well-known to experts, but we lack a reference, so we present 
it here for convenience.

\begin{Lemma}\label{haar_compare}
Let $H$ be a countable discrete torsion-free abelian group. Let $G$ be a closed subgroup of $\widehat{H}$
such that the normalized Haar measure $\lambda_{G}$ of $G$ is absolutely continuous with respect to the 
normalized Haar measure $\lambda_{\widehat{H}}$ of $\widehat{H}$ 
$($regarding $\lambda_{G}$ to be a measure on $\widehat{H}$ by extending it by zero on the complement 
of $G)$. Then $G=\widehat{H}$. 
\end{Lemma}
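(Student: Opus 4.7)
The plan is to derive a contradiction from the assumption $G\subsetneq\widehat{H}$ by combining Pontryagin duality with the Riemann--Lebesgue lemma on the compact abelian group $\widehat{H}$. First, I would compute the Fourier transform of $\lambda_{G}$, viewed as a Radon probability measure on $\widehat{H}$, in terms of the annihilator $G^{\perp}=\{h\in H:\chi(h)=1\text{ for all }\chi\in G\}$. By orthogonality of characters,
\begin{align*}
\widehat{\lambda}_{G}(h)=\int_{G}\overline{h(\chi)}\,d\lambda_{G}(\chi)=\begin{cases}1 & \text{if }h\in G^{\perp},\\ 0 & \text{otherwise},\end{cases}
\end{align*}
so $\widehat{\lambda}_{G}=\mathbf{1}_{G^{\perp}}$ as a function on $H$.

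Next, I would use the hypothesis $\lambda_{G}\ll\lambda_{\widehat{H}}$. The Radon--Nikodym theorem produces a density $f=d\lambda_{G}/d\lambda_{\widehat{H}}\in L^{1}(\widehat{H},\lambda_{\widehat{H}})$, and consequently $\widehat{\lambda}_{G}(h)=\widehat{f}(h)$ for every $h\in H$. The Riemann--Lebesgue lemma on the compact abelian group $\widehat{H}$ then forces $\widehat{f}\in c_{0}(H)$: for every $\varepsilon>0$, the set $\{h\in H:|\widehat{f}(h)|>\varepsilon\}$ is finite.

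Finally, I would close the argument using the torsion--free hypothesis. If $G$ were a \emph{proper} closed subgroup of $\widehat{H}$, Pontryagin duality would yield $G^{\perp}\neq\{0\}$; picking any nonzero $h_{0}\in G^{\perp}$ and using that $H$ is torsion--free, the infinite cyclic subgroup $\langle h_{0}\rangle\subseteq G^{\perp}$ would make $G^{\perp}$ infinite. Then $\widehat{\lambda}_{G}=\mathbf{1}_{G^{\perp}}$ would take the value $1$ on an infinite subset of $H$, contradicting $\widehat{\lambda}_{G}=\widehat{f}\in c_{0}(H)$. Hence $G=\widehat{H}$.

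There is no serious obstacle; the argument is essentially a clean application of Riemann--Lebesgue to the explicit Fourier transform of $\lambda_{G}$. The only subtlety is recognizing where the torsion--free hypothesis on $H$ is used: it is precisely what forces a nontrivial annihilator to be infinite. The example $H=\mathbb{Z}/2\mathbb{Z}$ with $G=\{0\}\subset\widehat{H}=\mathbb{Z}/2\mathbb{Z}$ shows that the hypothesis cannot be dropped, since in that finite setting every probability measure on $\widehat{H}$ is automatically absolutely continuous with respect to $\lambda_{\widehat{H}}$.
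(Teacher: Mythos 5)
Your proof is correct, but it follows a genuinely different route from the one in the paper. You compute $\widehat{\lambda}_{G}=\mathbf{1}_{G^{\perp}}$, invoke absolute continuity to obtain an $L^{1}$ density and hence, by the Riemann--Lebesgue lemma, $\widehat{\lambda}_{G}\in c_{0}(H)$, and then use torsion-freeness to make $G^{\perp}$ infinite whenever it is nontrivial. The paper instead extracts only the weaker consequence $\lambda_{\widehat{H}}(G)>0$ from absolute continuity: a closed subgroup of positive Haar measure in a compact group has finite index (by translation invariance), while torsion-freeness of $H$ makes $\widehat{H}$ connected $($Theorem $24.25$ of \cite{HR}$)$, so the finite quotient $\widehat{H}/G$, being a continuous image of a connected group, must be trivial. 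The two arguments are essentially dual to one another --- connectedness of $\widehat{H}$ is equivalent to the statement that every nontrivial subgroup of $H$ is infinite --- but yours runs on the Fourier side and the paper's on the group side. The paper's version is slightly more economical in that it never needs the Radon--Nikodym density or Riemann--Lebesgue, only positivity of $\lambda_{\widehat{H}}(G)$; your version has the mild advantage of making completely explicit where each hypothesis enters, and your closing counterexample with $H=\mathbb{Z}/2\mathbb{Z}$ correctly shows the torsion-free assumption is not removable. Both proofs are complete and correct.
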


\begin{proof}
Note that both $\widehat{H}$ and $G$ are compact abelian groups, thus  $\lambda_{G}$ and 
$\lambda_{\widehat{H}}$ exist. Consequently, $\lambda_{\widehat{H}}(G)>0$. 
Since $H$ is torsion-free, so $\widehat{H}$ is connected 
$($see Theorem $24. 25$ \cite{HR}$)$.  
Since $\lambda_{\widehat{H}}$ is normalized, the translation invariance of Haar measure
forces that $\widehat{H}/G$ is a finite abelian group. However, the quotient map $q:\widehat{H}
\rightarrow \widehat{H}/G$ is continuous, so the image of $q$ is connected. Thus, $\widehat{H}/G$
is trivial.
\end{proof}

\begin{Theorem}\label{inclusion_of_subgroups}
Let $\Gamma$ be a countable discrete non abelian group and let $\Gamma_{0}$ be an infinite
abelian subgroup of $\Gamma$. Suppose $L(\Gamma_{0})\subset
L(\Gamma)$ is a  mixing masa. Then,\\
$(i)$ the left--right measure of $L(\Gamma_{0})$ is the class of a measure which is 
absolutely continuous with respect to the product measure;\\
$(ii)$ if $\Gamma_{0}$ is torsion-free, then the left--right 
measure of $L(\Gamma_{0})$ is the class of product measure. 
Moreover, $\Gamma$ is i.c.c. $($infinite conjugacy class$)$ and 
$Puk_{L(\Gamma)}(L(\Gamma_{0}))=\{m\}$, $m\in \mathbb{N}\cup \{\infty\}$ 
and $\Gamma_{0}$ is malnormal in $\Gamma$.
\end{Theorem}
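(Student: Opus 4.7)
The plan is to compute the left--right measure of $L(\Gamma_{0})\subset L(\Gamma)$ explicitly via Pontryagin duality on the abelian group $\Gamma_{0}$, and then read off (i), (ii), i.c.c., malnormality, and the Puk\'anszky invariant from the structure of the resulting supports. I identify $L(\Gamma_{0})\cong L^{\infty}(\widehat{\Gamma_{0}},\lambda)$ with $\lambda$ the normalized Haar measure of $\widehat{\Gamma_{0}}$, so the left--right measure lives on $\widehat{\Gamma_{0}}\times\widehat{\Gamma_{0}}$. The orbit decomposition
\[
L^{2}(L(\Gamma))\ominus L^{2}(L(\Gamma_{0}))\;=\;\bigoplus_{[g]}\ell^{2}(\Gamma_{0}g\Gamma_{0}),
\]
indexed by double-coset representatives $[g]\in\Gamma_{0}\backslash(\Gamma\setminus\Gamma_{0})/\Gamma_{0}$, realizes each summand as the cyclic $L(\Gamma_{0})$-bimodule generated by $u_{g}$, so the left--right measure is the class of $\sum_{[g]}c_{g}\eta_{u_{g}}$ for any positive weights summing to one.

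The key computation is: for $g\in\Gamma\setminus\Gamma_{0}$, with $\Sigma_{g}:=\Gamma_{0}\cap g\Gamma_{0}g^{-1}$, one has
\[
\kappa_{u_{g}}(u_{\gamma_{1}}\otimes u_{\gamma_{2}})\;=\;\tau(u_{g^{-1}\gamma_{1}g\gamma_{2}})\;=\;\mathbf{1}_{K_{g}}(\gamma_{1},\gamma_{2}),
\]
where $K_{g}:=\{(\gamma,g^{-1}\gamma^{-1}g):\gamma\in\Sigma_{g}\}\leq\Gamma_{0}\times\Gamma_{0}$ is a subgroup isomorphic to $\Sigma_{g}$ (using that $\Gamma_{0}$ is abelian). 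By the Bochner--Pontryagin duality for the compact abelian group $\widehat{\Gamma_{0}}\times\widehat{\Gamma_{0}}$, the unique probability measure with Fourier transform $\mathbf{1}_{K_{g}}$ is the normalized Haar measure on the annihilator $K_{g}^{\perp}$, which is therefore $\eta_{u_{g}}$. Now mixing forces $\Sigma_{g}$ to be finite: applying Definition \ref{sm} with $x=u_{g}$, $y=u_{g}^{*}$, and $a_{n}=u_{\gamma_{n}}$ for distinct $\gamma_{n}\in\Gamma_{0}$ yields
\[
\norm{\mathbb{E}_{L(\Gamma_{0})}(u_{g}u_{\gamma_{n}}u_{g}^{*})}_{2}\;=\;\mathbf{1}[\,g\gamma_{n}g^{-1}\in\Gamma_{0}\,]\;\longrightarrow\;0,
\]
which means $\Gamma_{0}\cap g^{-1}\Gamma_{0}g$ (and hence its conjugate $\Sigma_{g}$) is finite.

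Finiteness of $\Sigma_{g}$ makes $K_{g}$ finite, so $K_{g}^{\perp}$ has finite index $|\Sigma_{g}|$ in $\widehat{\Gamma_{0}}\times\widehat{\Gamma_{0}}$ and is thus open, and its normalized Haar measure has density $|\Sigma_{g}|\,\mathbf{1}_{K_{g}^{\perp}}$ with respect to $\lambda\otimes\lambda$. Therefore $\eta_{u_{g}}\ll\lambda\otimes\lambda$ for every $g\notin\Gamma_{0}$, proving (i). For (ii), torsion-freeness of $\Gamma_{0}$ forces the finite subgroup $\Sigma_{g}$ to be trivial for every $g\notin\Gamma_{0}$, i.e.\ $\Gamma_{0}$ is malnormal in $\Gamma$; then $K_{g}^{\perp}=\widehat{\Gamma_{0}}\times\widehat{\Gamma_{0}}$ and $\eta_{u_{g}}=\lambda\otimes\lambda$, so the left--right measure equals $[\lambda\otimes\lambda]$.

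For the i.c.c.\ conclusion I use that $Z(L(\Gamma))\subseteq L(\Gamma_{0})$ (masa maximality): since $Z(L(\Gamma))$ is the closed linear span of the class sums $\sum_{g\in C}u_{g}$ over finite conjugacy classes $C$, every such $C$ lies in $\Gamma_{0}$; if some $g\in\Gamma_{0}\setminus\{e\}$ had finite $\Gamma$-conjugacy class, then $hgh^{-1}\in\Gamma_{0}$ for every $h\in\Gamma\setminus\Gamma_{0}$ (such $h$ exists because $\Gamma$ is non-abelian), giving $\langle g\rangle\subseteq\Sigma_{h^{-1}}$, which is infinite by torsion-freeness, contradicting malnormality. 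For the Puk\'anszky invariant, malnormality makes $(\gamma_{1},\gamma_{2})\mapsto\gamma_{1}g\gamma_{2}$ a bijection $\Gamma_{0}\times\Gamma_{0}\to\Gamma_{0}g\Gamma_{0}$, so each $\ell^{2}(\Gamma_{0}g\Gamma_{0})$ is unitarily isomorphic to the coarse $L(\Gamma_{0})$-bimodule $L^{2}(\Gamma_{0})\otimes L^{2}(\Gamma_{0})$ (multiplicity one, measure $\lambda\otimes\lambda$); summing over the $m=|\Gamma_{0}\backslash(\Gamma\setminus\Gamma_{0})/\Gamma_{0}|$ double cosets yields constant multiplicity $m\in\mathbb{N}\cup\{\infty\}$, hence $Puk_{L(\Gamma)}(L(\Gamma_{0}))=\{m\}$. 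The main obstacle I anticipate is the Pontryagin-duality identification of $\eta_{u_{g}}$ as Haar measure on $K_{g}^{\perp}$, together with the careful bookkeeping between ``$\Sigma_{g}$ finite'' (giving absolute continuity) and ``$\Sigma_{g}$ trivial'' (giving equivalence to the product), which is precisely the dichotomy separating parts (i) and (ii).
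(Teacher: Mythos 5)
Your proof is correct, and while it shares the paper's overall skeleton --- computing the Fourier coefficients of $\eta_{u_{g}}$ over $\Gamma_{0}\times\Gamma_{0}$, identifying $\eta_{u_{g}}$ with the normalized Haar measure of the annihilator of $K_{g}$, and assembling the full left--right measure from the double coset decomposition (Lemma 5.7 of \cite{DSS}, Theorem 4.1 of \cite{S-S4}) --- several of your steps take a genuinely different, and in places more elementary, route. You extract finiteness of $\Sigma_{g}=\Gamma_{0}\cap g\Gamma_{0}g^{-1}$ directly from Definition \ref{sm} with $x=u_{g}$, $y=u_{g^{-1}}$, $a_{n}=u_{\gamma_{n}}$, whereas the paper routes this through the Jolissaint--Stalder combinatorial condition (ST). For part $(i)$ you get absolute continuity by observing that $K_{g}^{\perp}$ is an open subgroup of index $\abs{\Sigma_{g}}$, so its Haar measure has bounded density $\abs{\Sigma_{g}}\mathbf{1}_{K_{g}^{\perp}}$; the paper instead uses $\ell^{2}$--summability of $\left\{\norm{\mathbb{E}_{L(\Gamma_{0})}(u_{g}u_{h}u_{g}^{*})}_{2}\right\}_{h}$ together with Proposition 2.4 of \cite{Muk2} to produce $L^{2}$ (in fact continuous) fibre densities, and reserves the Haar-measure identification of $\eta_{u_{g}}$ for the torsion-free case. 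For part $(ii)$ your observation that a finite subgroup of a torsion-free group is trivial replaces Lemma \ref{haar_compare} entirely --- a genuine simplification, since the connectedness argument becomes unnecessary once $\Sigma_{g}$ is known to be finite. Finally, your i.c.c.\ argument (class sums lie in $Z(L(\Gamma))\subseteq L(\Gamma_{0})$, and malnormality kills any nontrivial finite conjugacy class) is group-theoretic, whereas the paper argues that a nontrivial central projection would confine $\eta_{u_{g}}$ to a union of two measurable rectangles, contradicting the product class. Both routes are valid; yours is more self-contained and makes the dichotomy between ``$\Sigma_{g}$ finite'' (part $(i)$) and ``$\Sigma_{g}$ trivial'' (part $(ii)$) completely transparent, while the paper's yields the extra analytic information about the fibre densities as a byproduct.
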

\begin{proof}
We first prove the statements regarding the left--right measures in both cases and then prove the remaining 
statements of $(ii)$. In \cite[Theorem 3.5]{J-S}, it was shown that the hypothesis is equivalent
to the following condition: $($ST$)$ For every finite subset $F\subset \Gamma \setminus
\Gamma_{0}$, there exists a finite subset $E$ of $\Gamma_{0}$ such
that $gg_{0}h\not\in \Gamma_{0}$ for all $g_{0}\in\Gamma_{0}\setminus E$ and all $g,h\in F$. \\
\indent Let $\widehat{\Gamma_{0}}$ denote the Pontryagin dual of $\Gamma_{0}$ and let $\lambda_{\widehat{\Gamma_{0}}}$ denote the normalized Haar measure on $\widehat{\Gamma_{0}}$. The
left--right measure of $L(\Gamma_{0})$ is naturally supported
on $\widehat{\Gamma_{0}}\times \widehat{\Gamma_{0}}$. Let $u_{g}\in L(\Gamma)$ be the unitary
operator corresponding to the group element $g\in \Gamma$. Fix $g\in
\Gamma\setminus\Gamma_{0}$. Then, taking $F=\{g,g^{-1}\}$, there is a
finite subset $E$ of $\Gamma_{0}$ such that
$\mathbb{E}_{L(\Gamma_{0})}(u_{g}u_{h}u_{g}^{*})=0$ for all
$h\in\Gamma_{0}\setminus E$. Therefore, by arguments similar to those in the proof of Theorem \ref{aeconv} (using, in particular, the appropriate analogue of equation \eqref{identfy_Fcoeff}), we get 
\begin{align}
\nonumber\eta_{u_{g}}^{t}(1\otimes
\check{h})=0 \text{ for } \lambda_{\widehat{\Gamma_{0}}} \text{ almost all }t\in 
\widehat{\Gamma_{0}} \text{ and }h\in
\Gamma_{0}\setminus E,
\end{align} 
where $\check{h}$ is the canonical image of
$h$ in $C(\widehat{\Gamma_{0}})$. Recall that $\left\{u_{h}:h\in \Gamma_{0}\right\}$ is an orthonormal basis of 
$\ell^{2}(\Gamma_{0})$. Thus, one has
\begin{align}
\nonumber \sum_{h\in \Gamma_{0}}\norm{\mathbb{E}_{L(\Gamma_{0})}(u_{g}u_{h}u_{g}^{*})}^{2}_{2}<\infty.
\end{align}
From the proof of Proposition 2.4 of \cite{Muk2} and the remark following it, we get 
that $\tilde{\eta}_{u_{g}}^{t}\ll\lambda_{\widehat{\Gamma_{0}}}$ and the Radon--Nikodym 
derivative $f_{t}$ of $\tilde{\eta}_{u_{g}}^{t}$ with respect to 
$\lambda_{\widehat{\Gamma_{0}}}$ is in $L^{2}(\lambda_{\widehat{\Gamma_{0}}})$ 
for $\lambda_{\widehat{\Gamma_{0}}}$ almost all $t$. However, since  
$\left\{\check{h}:h\in \Gamma_{0}\right\}$ is an orthonormal basis of 
$L^{2}(\lambda_{\widehat{\Gamma_{0}}})$, one has 
$f_{t}=\sum_{h\in \Gamma_{0}}\langle f_{t},\check{h}\rangle \check{h}$ for 
$\lambda_{\widehat{\Gamma_{0}}}$ almost all $t$ and the series converge in 
$L^{2}(\lambda_{\widehat{\Gamma_{0}}})$. Consequently, only finitely many terms of 
the Fourier series survive and hence $f_{t}$ is continuous for 
$\lambda_{\widehat{\Gamma_{0}}}$ 
almost all $t$. \\
\indent If $\Gamma_{0}$ is torsion-free, then so is $\Gamma_{0}\times \Gamma_{0}$. 
By Lemma $5.6$ of \cite{DSS}, $\eta_{u_{g}}$ is the normalized Haar measure
of the subgroup $K_{g}^{\circ}$, where $K_{g}=\{(h_{1},h_{2})\in \Gamma_{0}\times \Gamma_{0}: h_{1}gh_{2}=g\}$
and $K_{g}^{\circ}=\{\gamma\in \widehat{\Gamma_{0}}\times \widehat{\Gamma_{0}}: \gamma(K_{g})=1\}$. 
By the first part of the argument 
$\eta_{u_{g}}\ll \lambda_{\widehat{\Gamma_{0}}}\otimes \lambda_{\widehat{\Gamma_{0}}}$. Thus,  
by Lemma \ref{haar_compare} it follows that $K_{g}^{\circ}=\widehat{\Gamma_{0}}\times \widehat{\Gamma_{0}}$,
i.e., $K_{g}$ is trivial. \\
\indent No matter what $\Gamma_{0}$ be, if $g_{i}\in \Gamma\setminus\Gamma_{0}$ and 
$c_{i}\in \mathbb{C}$ for $1\leq i\leq n$, then 
$\eta_{\sum_{i=1}^{n}c_{i}u_{g_{i}}}=\sum_{i=1}^{n}\abs{c_{i}}^{2}\eta_{u_{g_{i}}}+\sum_{i\neq j=1}^{n}c_{i}\bar{c}_{j}\eta_{u_{g_{i}},u_{g_{j}}}\ll \lambda_{\widehat{\Gamma_{0}}}\otimes \lambda_{\widehat{\Gamma_{0}}}$ from equation \eqref{abscont11}. Now, the linear span of $\left\{u_{g}:g\in \Gamma\setminus
\Gamma_{0}\right\}$ is dense in $\ell^{2}(\Gamma_{0})^{\perp}$ in
$\norm{\cdot}_{2}$. Use Lemma 3.9, 3.10 of \cite{Muk1} to conclude that
$\eta_{\zeta}\ll \lambda_{\widehat{\Gamma_{0}}}\otimes \lambda_{\widehat{\Gamma_{0}}}$ 
for all $\zeta\in \ell^{2}(\Gamma_{0})^{\perp}$. Finally, use Lemma 5.7 \cite{DSS} to 
conclude about the left--right measure. \\
\indent The statement regarding the Puk\'{a}nzsky invariant in case $(ii)$ follows directly 
from Lemma 5.7 \cite{DSS} and the preceding arguments, as $K_{g}=K_{g^{\prime}}$ 
for all $g,g^{\prime}\in \Gamma\setminus\Gamma_{0}$. 
$($The same can be directly deduced from Theorem 4.1 \cite{S-S4} as well by considering the double coset 
structure of $\Gamma_{0}$ in $\Gamma)$. Malnormality of $\Gamma_{0}$ is true for the same 
reason $(K_{g}$ is trivial for $g\not\in \Gamma_{0})$.\\
\indent We now show that $L(\Gamma)$ is a factor $($in case $(ii)$ of the statement$)$, 
which will force $\Gamma$ to be i.c.c. If $p\neq 0$ is a central projection of $L(\Gamma)$, then 
$L(\Gamma)=L(\Gamma)p\oplus L(\Gamma)(1-p)$. Note that $p\in L(\Gamma_{0})$. 
The left--right measure of $L(\Gamma_{0})$ is 
$[\lambda_{\widehat{\Gamma_{0}}}\otimes \lambda_{\widehat{\Gamma_{0}}}]$ 
and $[\eta_{u_{g}}]=[\lambda_{\widehat{\Gamma_{0}}}\otimes \lambda_{\widehat{\Gamma_{0}}}]$ for $g\in\Gamma\setminus\Gamma_{0}$. But for any $a,b\in L(\Gamma_{0})$, we have
\begin{align}
\nonumber \langle au_{g}b,u_{g}\rangle&= \langle (ap\oplus a(1-p))(u_{g}p\oplus u_{g}(1-p))(bp\oplus b(1-p)),(u_{g}p\oplus u_{g}(1-p))\rangle\\
\nonumber &=\langle apu_{g}pbp,u_{g}p\rangle+\langle a(1-p)u_{g}(1-p)b(1-p),u_{g}(1-p)\rangle.
\end{align}
This shows that $\eta_{u_{g}}$ is supported on the union of two measurable 
rectangles and hence the left--right measure is concentrated on the same set.  
Consequently, the left--right measure cannot be equal to the product class unless $p=1$. 
Thus, $L(\Gamma)$ is a factor.
\end{proof}

\begin{Remark}
\emph{As was pointed out to us by the referee of an earlier version of this paper, $(ii)$ of Theorem \ref{inclusion_of_subgroups} does not hold if 
$\Gamma_{0}$ has torsion. For example, consider the inclusion $\mathbb{Z}\times \mathbb{Z}/n\mathbb{Z}\subset \mathbb{F}_{2}\times \mathbb{Z}/n\mathbb{Z}$, where $\mathbb{Z}$ is a free factor of $\mathbb{F}_{2}$. Then the 
hypothesis of Theorem \ref{inclusion_of_subgroups} is satisfied but the left--right measure of the inclusion is 
absolutely continuous but not Lebesgue, as $\mathbb{F}_{2}\times \mathbb{Z}/n\mathbb{Z}$ is not i.c.c. Also, in the
torsion-free case malnormality of $\Gamma_{0}$ is equivalent to strong mixing, for malnormality forces the masa to be of product class. Malnormal subgroups were used by Popa to gain 
control over normalizers and relative commutants \cite{Po1}.  Recently, Robertson 
and Steger \cite{RobSt} have proved that, if
$G$ is a connected semisimple real algebraic group such that
$G(\mathbb{R})$ has no compact factors, then any torsion-free
uniform lattice subgroup $\Gamma$ of $G(\mathbb{R})$ contains a
malnormal abelian subgroup $\Gamma_{0}$ such that
$Puk_{L(\Gamma)}(L(\Gamma_{0}))=\{\infty\}$. Thus, the group von
Neumann factor of any such torsion-free uniform lattice subgroup
contains a singular masa of product class and infinite multiplicity.
In general, it is of interest to know whether every $\rm{II}_{1}$
factor has a singular masa of product class and infinite
multiplicity. It is also to be noted that, there are no examples so far of masas in $\rm{II}_{1}$ 
factors for which the left--right measure is absolutely continuous with respect to 
the product class, but not equivalent to the product class.}
\end{Remark}

\indent The significance of the next corollary will become clear in the next section, when we relate the left--right 
measure to the maximal spectral type of an action.

\begin{Corollary}\label{notexist}
There does not exist any countable discrete group of the form 
$\Gamma\rtimes_{\alpha} \Gamma_{0}$ with $\abs{\Gamma}=\infty$, and $\Gamma_{0}$ being torsion-free and abelian, such that $L(\Gamma_{0})\subset L(\Gamma)\rtimes_{\alpha}\Gamma_{0}$ 
is a mixing masa for which the left--right measure is absolutely continuous with 
respect to the product measure but not equivalent to the product measure.
\end{Corollary}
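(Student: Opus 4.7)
The plan is to derive Corollary~\ref{notexist} as an immediate consequence of Theorem~\ref{inclusion_of_subgroups}(ii), by arguing that the hypotheses of that theorem are automatically satisfied in this semidirect product setting, and then observing that the conclusion of the theorem directly contradicts the supposed properties of the left--right measure. So I would argue by contradiction: suppose such a countable discrete group $\Gamma\rtimes_{\alpha}\Gamma_{0}$ exists, with $|\Gamma|=\infty$, $\Gamma_{0}$ torsion-free abelian, and $L(\Gamma_{0})\subset L(\Gamma)\rtimes_{\alpha}\Gamma_{0}=L(\Gamma\rtimes_{\alpha}\Gamma_{0})$ a mixing masa whose left--right measure is absolutely continuous but not equivalent to the product measure.

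The first step is to verify the non-abelianness hypothesis of Theorem~\ref{inclusion_of_subgroups}. If $\Gamma\rtimes_{\alpha}\Gamma_{0}$ were abelian, then the ambient algebra $L(\Gamma\rtimes_{\alpha}\Gamma_{0})$ would be abelian, and the only way for $L(\Gamma_{0})$ to be maximal abelian in it would be $L(\Gamma_{0})=L(\Gamma\rtimes_{\alpha}\Gamma_{0})$, which forces $\Gamma$ to be trivial and contradicts $|\Gamma|=\infty$. Hence $\Gamma\rtimes_{\alpha}\Gamma_{0}$ is non-abelian, and $\Gamma_{0}$ is an infinite torsion-free abelian subgroup of this group.

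The second step is to invoke Theorem~\ref{inclusion_of_subgroups}(ii) with the ambient group being $\Gamma\rtimes_{\alpha}\Gamma_{0}$ and the abelian subgroup being $\Gamma_{0}$. Since the inclusion $L(\Gamma_{0})\subset L(\Gamma\rtimes_{\alpha}\Gamma_{0})$ is by assumption a mixing masa and $\Gamma_{0}$ is torsion-free, the theorem concludes that the left--right measure of $L(\Gamma_{0})$ is precisely the class of the product measure $\lambda_{\widehat{\Gamma_{0}}}\otimes\lambda_{\widehat{\Gamma_{0}}}$.

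This is a direct contradiction with the assumption that the left--right measure is absolutely continuous with respect to, but not equivalent to, the product measure, and the corollary follows. The only mild subtlety to handle carefully is the verification that Theorem~\ref{inclusion_of_subgroups}(ii) applies verbatim; since that theorem does not assume the ambient factor arises from a semidirect product, no additional work is needed beyond noting that its hypotheses (non-abelian ambient group, infinite torsion-free abelian subgroup, mixing masa inclusion) are satisfied. There is no genuine obstacle here—the work was already done in Theorem~\ref{inclusion_of_subgroups} and this corollary is simply the translation of that statement into the semidirect product language that will be relevant for the dynamical interpretation via maximal spectral types in Section~5.
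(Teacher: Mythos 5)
Your proposal is correct and is exactly the argument the paper intends: the corollary is an immediate consequence of Theorem~\ref{inclusion_of_subgroups}$(ii)$ applied to the ambient group $\Gamma\rtimes_{\alpha}\Gamma_{0}$ and the subgroup $\Gamma_{0}$, since that theorem forces the left--right measure to be the class of the product measure, contradicting the assumed inequivalence. Your preliminary checks (non-abelianness of the semidirect product, and that $\Gamma_{0}$ is necessarily infinite since a torsion-free group that is nontrivial is infinite) are exactly the routine verifications the paper leaves implicit.
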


\indent The next result is in spirit similar to the results in \cite{Kre} regarding 
wandering vectors. The results in \cite{Kre} are statements about modules 
over abelian von Neumann algebras, while the next result deals with bimodules.  
Theorem \ref{density_wand_vect} precisely generalizes the `malnormality condition'
$($in the context of group inclusions$)$ for masas of product class.

\begin{Theorem}\label{density_wand_vect}
Let $A\subset M$ be a masa of product class. Let $v\in A$ be a Haar
unitary generator of $A$. Let
\begin{align*}
 W(v)=\{\zeta\in L^{2}(M)\ominus L^{2}(A): \mathbb{E}_{A}(\zeta v^{n}\zeta^{*})=0,\text{ for all }n\neq 0\}.
\end{align*}
Then $span \text{ }W(v)$ is dense in $L^{2}(A)^{\perp}$.
\end{Theorem}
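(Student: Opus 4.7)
The plan is to use the coarse bimodule decomposition that characterizes masas of product class, and to exhibit an explicit dense family of vectors inside $W(v)$. I would first translate the defining condition of $W(v)$ into a measure-theoretic statement. Since $A$ is of product class, every $\zeta\in L^{2}(M)\ominus L^{2}(A)$ has $\eta_{\zeta}\ll\lambda\otimes\lambda$, so I set $g_{\zeta} := d\eta_{\zeta}/d(\lambda\otimes\lambda)$. Arguing as in the proof of Theorem \ref{aeconv}, Lemma \ref{identify_disintegrated_measure} shows that $\mathbb{E}_{A}(\zeta v^{n}\zeta^{*})(t) = \int_{0}^{1} v^{n}(s)\, g_{\zeta}(t,s)\, d\lambda(s)$ for $\lambda$-almost every $t$. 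Hence $\zeta\in W(v)$ if and only if for $\lambda$-a.e.\ $t$ every nonzero Fourier coefficient of the function $s\mapsto g_{\zeta}(t,s)$ vanishes; by Fourier uniqueness this is equivalent to $g_{\zeta}(t,s) = c_{\zeta}(t)$ depending only on $t$, $\lambda\otimes\lambda$-a.e.

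Next I would invoke the coarse bimodule decomposition afforded by the product-class hypothesis. By \cite{Muk2}, $L^{2}(M)\ominus L^{2}(A)$ decomposes as an $A$--$A$ bimodule into a direct sum $\bigoplus_{i\in I}\mathcal{H}_{i}$ of coarse summands, each $\mathcal{H}_{i}$ isomorphic to $L^{2}(A)\otimes L^{2}(A)$ under its natural left and right $A$-actions. In a coarse summand, an elementary tensor $\xi = f\otimes g$ satisfies
\[
\langle a\xi b,\xi\rangle \;=\; \int a(t)b(s)\,|f(t)|^{2}|g(s)|^{2}\,d\lambda(t)\,d\lambda(s),
\]
so the density of $\eta_{\xi}$ is $|f(t)|^{2}|g(s)|^{2}$. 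Choosing $g = u\in\mathcal{U}(A)$ makes the density reduce to $|f(t)|^{2}$, which is constant in $s$, so $f\otimes u\in W(v)$ by the first step. Since $\{v^{n}\}_{n\in\mathbb{Z}}$ is an orthonormal basis of $L^{2}(A)$ consisting of unitaries, the family $\{f\otimes v^{n}: f\in L^{2}(A),\, n\in\mathbb{Z}\}$ has dense linear span in $L^{2}(A)\otimes L^{2}(A)$. Collecting such vectors across the various summands $\mathcal{H}_{i}$ produces a subset of $W(v)$ whose linear span is dense in $\bigoplus_{i}\mathcal{H}_{i} = L^{2}(M)\ominus L^{2}(A)$, which gives the conclusion.

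The only delicate point I foresee is that the bimodule isomorphism $L^{2}(M)\ominus L^{2}(A)\cong\bigoplus_{i}L^{2}(A)\otimes L^{2}(A)$ must transport the measure $\eta_{\xi}$ faithfully. This is automatic: by equation \eqref{measure_from_kappa}, $\eta_{\xi}$ is determined entirely by the inner products $\langle a\xi b,\xi\rangle$, $a,b\in A$, and therefore depends only on the $A$--$A$-bimodule Hilbert-space structure. In particular, no compatibility with the modular conjugation $J$ is required, since Lemma \ref{identify_disintegrated_measure} has already converted the nominally $J$-dependent condition $\mathbb{E}_{A}(\zeta v^{n}\zeta^{*}) = 0$ into the purely bimodule-theoretic condition on $\eta_{\zeta}$ used in the first paragraph; the orthogonality of distinct coarse summands then guarantees that densities simply add when one forms finite sums across summands.
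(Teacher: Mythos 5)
Your proof is correct and follows essentially the same route as the paper's: both reduce membership in $W(v)$ to the vanishing of the nonzero Fourier coefficients of the fibre measures $\tilde{\eta}_{\zeta}^{t}$ via Lemma \ref{identify_disintegrated_measure} (so that the condition depends only on $\eta_{\zeta}$, not on $J$), and both use the product-class structure to exhibit a dense family of vectors whose $\eta$-density is constant in the second variable. The paper realizes these vectors concretely as $b\zeta_{\underline{\epsilon}}u$ for signed sums $\zeta_{\underline{\epsilon}}$ satisfying $\eta_{\zeta_{\underline{\epsilon}}}=\lambda\otimes\lambda$, whereas you take elementary tensors $f\otimes v^{n}$ in the coarse decomposition --- essentially the same vectors in different coordinates --- with the minor bonus that your first paragraph gives an exact characterization of $W(v)$.
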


\begin{proof}
Note that for $\zeta\in L^{2}(M)\ominus L^{2}(A)$, if $\mathbb{E}_{A}(\zeta v^{n}\zeta^{*})=0$
for all $n\neq 0$ for some Haar unitary generator, then the same is true for any Haar unitary generator 
$($see discussion after Theorem $2.1$ \cite{Muk2}$)$.\\
\indent Let $\mathcal{E}=\left\{\underline{\epsilon}=\{\epsilon_{i,n}\}_{0\leq
i< n, n\in Puk(A)}: \epsilon_{i,n}=\pm 1\right\}$. Without loss of generality, let $v$
correspond to the function $t\mapsto e^{2\pi it}$. The
left--right measure of $A$ is $[\lambda\otimes \lambda]$. For
$\mathbb{N}\cup\{\infty\}\ni n\in Puk(A)$ there exist vectors
$\zeta^{(n)}_{i}$, $0\leq i< n$, so that the projections
$P_{i}^{(n)}:L^{2}(M)\rightarrow 
\overline{A\zeta^{(n)}_{i}A}^{\norm{\cdot}_{2}}$ are mutually
orthogonal, equivalent in $\A^{\prime}$,
$\overline{A\zeta^{(n)}_{i}A}^{\norm{\cdot}_{2}} \perp L^{2}(A)$,
$\A^{\prime}(\sum_{0\leq i<n}P_{i}^{(n)})$ is the type $\rm{I}_{n}$ central summand 
of $\mathcal{A}^{\prime}(1-e_{A})$, and,
for $a,b\in C[0,1]\subset A$ and for all $\underline{\epsilon}\in
\mathcal{E}$,
\begin{align}
\nonumber \langle a \left(\underset{n\in
Puk(A)}\oplus\left({\underset{0\leq i<n}\oplus}\epsilon_{i,n}\zeta^{(n)}_{i}\right)\right)
b,\underset{n\in
Puk(A)}\oplus\left({\underset{0\leq i<n}\oplus}\epsilon_{i,n}\zeta^{(n)}_{i}\right)\rangle
=\int_{[0,1]\times [0,1]}a(t)b(s)d\lambda(t)d\lambda(s).
\end{align}
\indent Fix $\underline{\epsilon}\in \mathcal{E}$ and let
$\zeta_{\underline{\epsilon}}=\underset{n\in
Puk(A)}\oplus\text{ }{\underset{0\leq i<n}\oplus}\epsilon_{i,n}\zeta^{(n)}_{i}$.
By Lemma \ref{identify_disintegrated_measure}, we find
\begin{align}\label{cond_expc_zero}
\norm{\mathbb{E}_{A}(\zeta_{\underline{\epsilon}}
v^{n}\zeta_{\underline{\epsilon}}^{*})}_{1}=\int_{0}^{1}\abs{\lambda(
1\otimes v^{n})}d\lambda(t)=0, \text{ for all }n\neq 0.
\end{align}
Note that
$\zeta_{\underline{\epsilon}}\perp L^{2}(A)$. For $u\in
\mathcal{U}(A)$ and $b,c\in A$, use equation \eqref{cond_expc_zero} to conclude that 
\begin{align}
\nonumber \mathbb{E}_{A}((b\zeta_{\underline{\epsilon}}
u)v^{n}(c\zeta_{\underline{\epsilon}}
u)^{*})=b\mathbb{E}_{A}(\zeta_{\underline{\epsilon}}v^{n}\zeta_{\underline{\epsilon}}^{*}
)c^{*}=0 \text{ for all }n\neq 0.
\end{align}
Let
\begin{align}
\nonumber W=\text{ span }\left\{b\zeta_{\underline{\epsilon}}u :u\in
\mathcal{U}(A), b\in A, \underline{\epsilon}\in \mathcal{E}\right\}.
\end{align}
It is easy to check that $W$ is dense in $L^{2}(A)^{\perp}$.
\end{proof}

\section{Masas from Dynamical systems}

We begin this section with a remark about mixing measures. By
the Riemann--Lebesgue Lemma, any measure absolutely continuous with
respect to the Lebesgue measure is mixing; however, there are many mixing
singular measures as well. Any measure absolutely continuous with
respect to a mixing measure is mixing. Thus, mixing is a property of
equivalence classes of measures. Mixing measures can be characterized
in a geometric way as being asymptotically uniformly distributed
\cite[Proposition 2.6]{Kat}. In this section, we will analyze the notion of mixing masas from a
spectral theory point of view.\\
\indent In the next theorem, we relate  mixing actions of countable
discrete abelian groups to Fourier coefficients of the left--right measures of 
the associated mixing masas in the crossed product constructions. For simplicity, we
work with $\mathbb{Z}$--actions. 

\begin{Theorem}\label{action_sm}
Let $\alpha$ be a free  mixing action of $\mathbb{Z}$ on a
diffuse separable finite von Neumann algebra $N$ preserving a
faithful normal tracial state $\tau$. If $[\eta]$ is the
left--right measure of $L(\mathbb{Z})\subset N\rtimes
_{\alpha}\mathbb{Z}$, then $\tilde{\eta}^{t}$ is a mixing measure
for $\lambda$ almost all $t$.
\end{Theorem}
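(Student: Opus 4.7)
The plan is to compute the Fourier coefficients of $\tilde{\eta}_{x}^{t}$ directly for a dense family of test vectors in $L^{2}(M)\ominus L^{2}(A)$, and then assemble the conclusion for the left--right measure via a countable-sum argument. Write $M = N\rtimes_{\alpha}\mathbb{Z}$, $A = L(\mathbb{Z})$, let $v \in A$ be the canonical generator (corresponding to $S^{1}\ni t\mapsto t$), and let $\tau_{N}$ denote the trace on $N$, so that $\mathbb{E}_{A}(\sum_{n}y_{n}v^{n}) = \sum_{n}\tau_{N}(y_{n})v^{n}$.  First I would take $x\in N$ with $\tau_{N}(x) = 0$ (so $x\in L^{2}(M)\ominus L^{2}(A)$) and use the commutation $v^{n}x^{*} = \alpha^{n}(x^{*})v^{n}$ to compute
\[
\mathbb{E}_{A}(xv^{n}x^{*}) = \mathbb{E}_{A}\bigl(x\alpha^{n}(x^{*})v^{n}\bigr) = \tau_{N}\bigl(x\alpha^{n}(x^{*})\bigr)\,v^{n},
\]
whose value at $t\in S^{1}$ is $\tau_{N}(x\alpha^{n}(x^{*}))\,t^{n}$.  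By the identification $\mathbb{E}_{A}(xv^{n}x^{*})(t) = \eta_{x}^{t}(1\otimes v^{n})$ (the argument leading to equation \eqref{identfy_Fcoeff}; alternatively Lemma \ref{identify_disintegrated_measure}(3)), for $\lambda$-a.e.\ $t$ and all $n\in\mathbb{Z}$ (fix a single $\lambda$-null set, using countability of $\mathbb{Z}$), the $n$-th Fourier coefficient of $\tilde{\eta}_{x}^{t}$ satisfies $|\widehat{\tilde{\eta}_{x}^{t}}(n)| = |\tau_{N}(x\alpha^{n}(x^{*}))|$, which tends to $0$ as $|n|\to\infty$ by mixing of $\alpha$.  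Hence $\tilde{\eta}_{x}^{t}$ is a mixing measure for $\lambda$-a.e.\ $t$.

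To extend from this special family to the full left--right measure, I would observe that a parallel commutation calculation gives $\kappa_{xv^{n}}(v^{p}\otimes v^{q}) = \tau_{N}(x^{*}\alpha^{p}(x))\delta_{p+q,0} = \kappa_{x}(v^{p}\otimes v^{q})$, so $\eta_{xv^{n}} = \eta_{x}$ for every $n\in\mathbb{Z}$.  Since $L^{2}(M)\ominus L^{2}(A) = \bigoplus_{n\in\mathbb{Z}}(L^{2}(N)\ominus\mathbb{C})v^{n}$, the family $\{xv^{n}: x\in N,\,\tau_{N}(x) = 0,\,n\in\mathbb{Z}\}$ is $\|\cdot\|_{2}$-dense in $L^{2}(M)\ominus L^{2}(A)$.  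Fix a countable $\|\cdot\|_{2}$-dense set $\{x_{k}\}\subset\{y\in N:\tau_{N}(y)=0\}$.  The left--right measure class $[\eta]$ is the supremum of the classes $[\eta_{\zeta}]$ (it is the maximal spectral type of $\A$ acting on $L^{2}(M)\ominus L^{2}(A)$); by separability and the preceding reduction, it is represented by $\eta = \sum_{k}c_{k}\eta_{x_{k}}$ for any positive weights $c_{k}$ with $\sum_{k}c_{k}\|\eta_{x_{k}}\|_{t.v}<\infty$.

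Finally, by linearity and essential uniqueness of the $(\pi_{1},\lambda)$-disintegration, $\tilde{\eta}^{t} = \sum_{k}c_{k}\tilde{\eta}_{x_{k}}^{t}$ for $\lambda$-a.e.\ $t$; outside the countable union of the null sets produced in the first step, every summand is Rajchman, and dominated convergence yields
\[
|\widehat{\tilde{\eta}^{t}}(n)|\leq\sum_{k}c_{k}\,|\widehat{\tilde{\eta}_{x_{k}}^{t}}(n)|\longrightarrow 0,
\]
establishing that $\tilde{\eta}^{t}$ is a mixing measure for $\lambda$-a.e.\ $t$.  The main technical hurdle I anticipate is promoting $L^{2}(\lambda)$-in-$t$ decay of the Fourier coefficients (which is all that the mixing masa property applied to the weakly null sequence $a_{n}=v^{n}$ gives directly, via Lemma \ref{identify_disintegrated_measure}(3)) to genuinely pointwise-in-$t$ decay; the exact scalar formula $\mathbb{E}_{A}(xv^{n}x^{*}) = \tau_{N}(x\alpha^{n}(x^{*}))v^{n}$ circumvents this by pinning down $\eta_{x}^{t}(1\otimes v^{n})$ up to a unimodular factor in $t$.
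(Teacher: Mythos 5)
Your proposal is correct, and its first half is exactly the paper's opening move: the scalar identity $\mathbb{E}_{A}(xv^{n}x^{*})=\tau_{N}(x\alpha^{n}(x^{*}))v^{n}$ for $x\in N$ with $\tau_{N}(x)=0$, combined with the identification $\eta_{x}^{t}(1\otimes v^{n})=\mathbb{E}_{A}(xv^{n}x^{*})(t)$, which pins down $|\widehat{\tilde{\eta}_{x}^{t}}(n)|=|\tau_{N}(x\alpha^{n}(x^{*}))|$ pointwise in $t$ and lets mixing of $\alpha$ do the work --- this is precisely how the paper, too, evades the $L^{2}$-versus-pointwise issue you flag at the end. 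Where you genuinely diverge is in the globalization step. The paper proceeds by first proving the Rajchman property for fibres of $\eta_{x}$ with $x=\sum_{i}x_{i}u_{k_{i}}$ a finite sum (handling the cross terms $\eta_{x_{i}u_{k_{i}},x_{j}u_{k_{j}}}=(1\otimes e_{k_{i}-k_{j}})\eta_{x_{i},x_{j}}$ explicitly via \eqref{polarization1}), then picks a single vector $\zeta$ with $\eta=\eta_{\zeta}$, approximates it by such finite sums, and invokes total-variation convergence $\eta_{x_{n}}\to\eta_{\zeta}$ together with the uniform setwise convergence of a subsequence of the disintegrations (Lemmas 3.9 and 3.10 of \cite{Muk1}). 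You instead exploit $\eta_{xv^{n}}=\eta_{x}$ to observe that the total family $\{x_{k}v^{n}\}$ contributes only the countably many measures $\eta_{x_{k}}$ with $x_{k}\in N$, represent the maximal spectral type as $\sum_{k}c_{k}\eta_{x_{k}}$, and sum the fibre measures, finishing with dominated convergence for the series. This avoids both the cross-term computation and the delicate fibrewise approximation lemma, at the cost of having to justify that a total (not dense) family suffices for the spectral-type supremum --- which does go through via \eqref{abscont11} and the total-variation continuity of $\zeta\mapsto\eta_{\zeta}$ --- and that the termwise sum of the disintegrations is, by essential uniqueness, the disintegration of the sum (finiteness a.e.\ following from $\sum_{k}c_{k}\mathbb{E}_{A}(x_{k}x_{k}^{*})\in L^{1}(\lambda)$). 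Both routes are sound; yours is arguably the cleaner organization, while the paper's intermediate step for finite sums $\sum_{i}x_{i}u_{k_{i}}$ gives slightly more information (mixing of $\tilde{\eta}_{x}^{t}$ for every such $x$, not just for the fibres of one representative of the class).
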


\begin{proof}
Let $M=N\rtimes _{\alpha}\mathbb{Z}$. The tracial state on $M$ will
be denoted by $\tau$ as well. 
Let $u_{n}\in M$ be the canonical unitaries implementing the action. 
Suppose $x\in N$ and $n,n_{1},n_{2}\in
\mathbb{Z}$. Then, the equation
\begin{align}\label{trace1}
\langle
u_{n_{1}}xu_{n}u_{n_{2}},xu_{n}\rangle&=\tau(u_{n_{1}}xu_{n}u_{n_{2}}u_{-n}x^{*})
=\tau(u_{n_{1}}xu_{n_{2}}x^{*}),
\end{align}
implies that $\eta_{xu_{n}}=\eta_{x}$ for all $x\in N$ and all $n\in
\mathbb{Z}$. \\
\indent Note that the left--right measure of
$L(\mathbb{Z})\subset M$ is naturally supported on
$\widehat{\mathbb{Z}}=S^{1}$. Identify
$L(\mathbb{Z})=L^{\infty}(S^{1},\lambda)$, where $\lambda$ is the normalized
Haar measure on $S^{1}$, via the standard identification which
sends $u_{n}$ to the function $e_{n}(t)=t^{n}$, $t\in S^{1}$, $n\in
\mathbb{Z}$. Now, for $x\in N$ and $m\in \mathbb{Z}$,
\begin{align*}
\mathbb{E}_{L(\mathbb{Z})}(xu_{m}x^{*})&=\mathbb{E}_{L(\mathbb{Z})}(x\alpha_{m}(x^{*})u_{m})
=\mathbb{E}_{L(\mathbb{Z})}(x\alpha_{m}(x^{*}))u_{m}
=\tau(x\alpha_{m}(x^{*}))u_{m}.
\end{align*}
\indent Therefore, from Lemma \ref{identify_disintegrated_measure}, $\eta_{x}^{t}(1\otimes
e_{m})=\tau(x\alpha_{m}(x^{*}))e_{m}(t)$ for $\lambda$ almost all $t\in
S^{1}$ and for all $m$. Since the action $\alpha$ is mixing, so
$\tilde{\eta}_{x}^{t}$ is a mixing measure for $\lambda$ almost all $t$
whenever $\tau(x)=0$.\\
\indent Let $x=\sum_{i=1}^{n}x_{i}u_{k_{i}}\in M$ be such that
$\mathbb{E}_{L(\mathbb{Z})}(x)=0$ and $k_{i}\neq k_{j}$ for $i\neq j$. Therefore, 
$\tau(x_{i})=0$, for all $1\leq i\leq n$. Now from equation \eqref{trace1}, we get
$\eta_{x}=\sum_{i=1}^{n}\eta_{x_{i}}+\sum_{i\neq
j=1}^{n}\eta_{x_{i}u_{k_{i}}, x_{j}u_{k_{j}}}$. It is easy to see
that $d\eta_{x_{i}u_{k_{i}}, x_{j}u_{k_{j}}}=(1\otimes
e_{k_{i}-k_{j}})d\eta_{x_{i},x_{j}}$ for all $i\neq j$. Thus, from
equation \eqref{polarization1} and Lemma 3.6 \cite{Muk1} it follows that,
\begin{align}
\nonumber \int_{S^{1}}s^{m}d\tilde{\eta}_{x_{i}u_{k_{i}},
x_{j}u_{k_{j}}}^{t}(s)=\int_{S^{1}}s^{m}s^{k_{i}-k_{j}}d\tilde{\eta}_{x_{i},x_{j}}^{t}(s)\rightarrow
0 \text{ as }m\rightarrow \infty
\end{align}
for $\lambda$ almost all $t$. This shows that $\tilde{\eta}_{x}^{t}$ is
a mixing measure for $\lambda$ almost all $t$.\\
\indent There is a unit vector $\zeta\in L^{2}(M)\ominus
L^{2}(L(\mathbb{Z}))$ such that $\eta=\eta_{\zeta}$. Let $
x_{n}=\sum_{i=1}^{k_{n}}x_{i}^{(n)}u_{k_{i}}^{(n)}\in M$ with
$x_{i}^{(n)}\in N$ be such that $\mathbb{E}_{A}(x_{n})=0$,
$\norm{x_{n}}_{2}\leq 1$ for all $n\in \mathbb{N}$ and
$x_{n}\rightarrow \zeta $ as $n\rightarrow \infty$ in
$\norm{\cdot}_{2}$. Then, $\eta_{x_{n}}\rightarrow \eta_{\zeta}=\eta$
in $\norm{\cdot}_{t.v}$ from Lemma 3.10 \cite{Muk1}. Then from Lemma
3.9 \cite{Muk1}, there is a subsequence $n_{k}$ with $n_{k}<n_{k+1}$
for all $k$ and a set $E\subset S^{1}$ with $\lambda(E)=0$, such that
for all $t\in E^{c}$, $\tilde{\eta}_{x_{n_{k}}}^{t},\tilde{\eta}^{t}$ are finite and
$$\underset{A\subseteq S^{1},A\text{ Borel}}\sup
\abs{\tilde{\eta}_{x_{n_{k}}}^{t}(A)-\tilde{\eta}^{t}(A)}\rightarrow
0 \text{ as }k\rightarrow \infty.$$ Note that
$\tilde{\eta}_{x_{n_{k}}}^{t}$ are mixing measures for all $k$ and
for $\lambda$ almost all $t$. From standard approximation arguments, it
follows that $\tilde{\eta}^{t}$ is a mixing measure for $\lambda$ almost all $t$.
\end{proof}

We now study relations between the spectral measure of an
action and the left--right measure of a masa that arises from
a dynamical system. 
Before doing so, we need some preparation on unitary representations. Let
$H$ be a locally compact abelian $($LCA$)$ group. Note that
$\widehat{H}$ is also a LCA group, where $\widehat{H}$ denotes the
Pontryagin dual of $H$. Also note that, if $H$ is discrete, then
$\widehat{H}$ is compact and vice versa. Let $\sigma_{\widehat{H}}$
denote the Borel $\sigma$--algebra of $\widehat{H}$. The following
result was proved by Stone for the case $H=\mathbb{R}$ and then
independently generalized by Naimark, Ambrose and Godement, and is
called the SNAG theorem \cite[Theorem D.3.1]{(T)}.

\begin{Theorem}[SNAG Theorem]\label{snag}
Let $(\pi,\h)$ be a strongly continuous unitary representation of a
LCA group $H$ on a separable Hilbert space $\h$. Then there exists
a unique regular projection valued measure
$E_{\pi}:\sigma_{\widehat{H}}\rightarrow Proj(\h)$ on $\widehat{H}$ such
that
\begin{align}
\nonumber \pi(g)=\int_{\widehat{H}}\overline{\chi(g)}dE_{\pi}(\chi),
\text{   for all }g\in H.
\end{align}
\end{Theorem}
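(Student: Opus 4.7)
The plan is to reduce to the cyclic case, establish the theorem there via Bochner's theorem, and then reassemble a regular projection-valued measure on $\widehat{H}$.

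First, using Zorn's lemma one decomposes $\h = \bigoplus_{\alpha} \h_{\alpha}$, where each $\h_{\alpha}$ is a closed $\pi$-invariant subspace generated by a single vector $v_{\alpha}$; that is, $\h_{\alpha} = \overline{\mathrm{span}}\{\pi(g)v_{\alpha} : g \in H\}$. Since a regular PVM on $\widehat{H}$ is determined by its action on each orthogonal summand, and regularity is preserved under orthogonal direct sums, it suffices to construct $E_{\pi}$ on each cyclic piece. The separability of $\h$ keeps the index set at most countable, which is convenient for the reassembly step.

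Second, let $v$ be a cyclic unit vector and set $\phi(g) = \langle \pi(g)v,v\rangle$. Strong continuity of $\pi$ makes $\phi$ a continuous function on $H$, and a short computation shows $\phi$ is positive definite. Bochner's theorem for LCA groups then produces a unique regular Borel probability measure $\mu$ on $\widehat{H}$ with $\phi(g) = \int_{\widehat{H}} \overline{\chi(g)}\, d\mu(\chi)$ for every $g \in H$. Define $T_{0}$ on the dense subspace $\mathrm{span}\{\pi(g)v : g\in H\}$ by $T_{0}(\pi(g)v) = e_{g}$, where $e_{g}(\chi) = \overline{\chi(g)}$. Expanding inner products and invoking the Bochner formula shows $T_{0}$ is inner-product preserving, hence extends to a unitary $T:\h \to L^{2}(\widehat{H},\mu)$ under which $\pi(g)$ is intertwined with multiplication by $e_{g}$.

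Third, on $L^{2}(\widehat{H},\mu)$ the canonical PVM sends a Borel set $B\subseteq \widehat{H}$ to multiplication by $\mathbf{1}_{B}$; pulling this back through $T$ defines a regular PVM on $\h_{\alpha}$. Assembling these orthogonally gives $E_{\pi}$ on $\h$, and by construction $\langle \pi(g)\zeta,\eta\rangle = \int_{\widehat{H}} \overline{\chi(g)}\, d\langle E_{\pi}(\cdot)\zeta,\eta\rangle$ for all $\zeta,\eta\in \h$, which is exactly the weak form of the integral representation. For uniqueness, if $E_{1}, E_{2}$ both represent $\pi$, then the scalar Radon measures $\mu^{i}_{\zeta,\eta} := \langle E_{i}(\cdot)\zeta,\eta\rangle$ have identical Fourier--Stieltjes transforms; uniqueness in Bochner's theorem, applied after polarization to reduce to positive measures, forces $\mu^{1}_{\zeta,\eta} = \mu^{2}_{\zeta,\eta}$ for all $\zeta,\eta$, and hence $E_{1} = E_{2}$.

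The \textbf{main obstacle} is Bochner's theorem itself in the LCA setting, which requires either Gelfand theory on the commutative Banach $*$-algebra $L^{1}(H)$ (whose maximal ideal space is canonically $\widehat{H}$) or the Plancherel/Fourier inversion machinery on $H$. I would cite this as a black box; once it is in hand, the reduction to a multiplication representation is routine, and the remaining care lies in verifying regularity of the pulled-back PVM and justifying the orthogonal-sum reassembly, both of which are standard.
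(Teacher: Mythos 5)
The paper does not prove this statement at all: it is quoted verbatim as a known result, with a citation to Bekka--de la Harpe--Valette, \emph{Kazhdan's property (T)}, Theorem D.3.1. Your argument is the standard textbook proof of that cited theorem (cyclic decomposition, Bochner's theorem on each cyclic piece, transport of the canonical multiplication PVM, orthogonal reassembly, and uniqueness via injectivity of the Fourier--Stieltjes transform), and it is correct in outline. Two small points deserve an explicit line in a careful write-up. First, the well-definedness of $T_{0}$ on $\mathrm{span}\{\pi(g)v\}$ is not automatic from the formula; it follows precisely because the computation $\langle \pi(g)v,\pi(h)v\rangle=\phi(h^{-1}g)=\langle e_{g},e_{h}\rangle_{L^{2}(\mu)}$ shows that any relation among the $\pi(g)v$ is matched by the same relation among the $e_{g}$. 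Second, the surjectivity of $T$ requires that $\mathrm{span}\{e_{g}:g\in H\}$ be dense in $L^{2}(\widehat{H},\mu)$; since $\widehat{H}$ need not be compact, a naive Stone--Weierstrass appeal does not apply directly, but density follows from the uniqueness half of Bochner's theorem (if $f\in L^{2}(\mu)$ is orthogonal to every $e_{g}$, the complex measure $f\,d\mu$ has vanishing Fourier--Stieltjes transform, hence vanishes). With those two remarks supplied, your proof is complete and matches the argument one finds in the cited reference.
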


\indent Let $(X,\nu)$ be a Lebesgue probability space, where $X$ is
a compact metrizable space. Let $H$ be a countable discrete abelian
group and let $\alpha$ be an automorphic $($free$)$ ergodic action of
$H$ on $X$ preserving the measure $\nu$. This gives rise to a
canonical unitary representation $\pi: H\rightarrow
\mathbf{B}(L^{2}(X,\nu)\ominus \mathbb{C}1)$. Let $\mu$ be the
maximal spectral type of this representation that arises from a
vector $f_{0}\in L^{2}(X,\nu)\ominus \mathbb{C}1$ \cite[p. 13]{Nad}. 
Consequently, by SNAG theorem and
Hahn--Hellinger theorem \cite{Nad}, there is a $\mu$--measurable field
of Hilbert spaces $\{\h_{\psi}\}_{\psi\in \widehat{H}}$ such that
\begin{align}\label{di}
L^{2}(X,\nu)\ominus \mathbb{C}1\cong
\int_{\widehat{H}}^{\oplus}\h_{\psi}d\mu(\psi)
\end{align}
and $L^{\infty}(\widehat{H},\mu)$ is unitarily equivalent to the
algebra of diagonalizable operators with respect to the
decomposition in equation \eqref{di}. For $f,g\in L^{2}(X,\nu)\ominus
\mathbb{C}1$, denote by $\mu_{f,g}$ the $($possibly$)$ complex Borel
measure on $\widehat{H}$ obtained as $\mu_{f,g}(B)=\langle
E_{\pi}(B)f,g\rangle$, where $B\subseteq \widehat{H}$ is Borel. We
will also denote $\mu_{f,f}$ by $\mu_{f}$. Thus $\mu_{f_{0}}=\mu$.
The dimension function of this decomposition in equation \eqref{di} is
the \emph{spectral multiplicity} of the representation $\pi$. 
Now
$($see Proposition 11 p. 174, \cite{Dix}$)$
\begin{align}\label{doubledint}
(L^{2}(X,\nu)\ominus
\mathbb{C}1)\otimes L^{2}(\widehat{H},\lambda_{\widehat{H}})\cong \int_{\widehat{H}\times
\widehat{H}}^{\oplus}\h_{\psi,\chi}d\mu(\psi)d\lambda_{\widehat{H}}(\chi),
\text{ where }\h_{\psi,\chi}=\h_{\psi},
\end{align}
and $\lambda_{\widehat{H}}$ is the normalized Haar measure on $\widehat{H}$.\\
\indent Each $h\in H$ defines a continuous function
$\widehat{h}:\widehat{H}\rightarrow \mathbb{C}$ by
$\widehat{h}(\chi)=\chi(h)$, $\chi\in \widehat{H}$. Furthermore,
$h\in H$ defines a unitary operator $m_{\widehat{h}}$ on
$L^{2}(\widehat{H},\lambda_{\widehat{H}})$ given by
$m_{\widehat{h}}(f)=\widehat{h}f$, $f\in
L^{2}(\widehat{H},\lambda_{\widehat{H}})$, and a projection
$e_{\widehat{h}}$ projecting onto $\mathbb{C} \widehat{h}$. 
Via the Fourier transform, the crossed product factor
$R_{\alpha}=L^{\infty}(X,\nu)\rtimes_{\alpha}H$ $($which of course
is the hyperfinite $\rm{II}_{1}$ factor$)$ is generated by $\left\{\sum_{h\in
H}\alpha_{h}(f)\otimes e_{\widehat{h}}: f\in L^{\infty}(X,\nu), h\in
H\right\}$ and $\left\{1\otimes m_{\widehat{h^{-1}}}:h\in H\right\}$
on $L^{2}(X,\nu)\otimes L^{2}(\widehat{H},\lambda_{\widehat{H}})$.
Note that we follow \cite{(T)} for the definition of Fourier transform.
One considers this standard $($GNS$)$ representation of
$R_{\alpha}$ on $L^{2}(X,\nu)\otimes L^{2}(\widehat{H},\lambda_{\widehat{H}})$. 
Let $J$ denote the
conjugation operator on the space $L^{2}(X,\nu)\otimes
L^{2}(\widehat{H},\lambda_{\widehat{H}})$. For $g\in H$, let $v_{g}$
be the unitary in $\mathbf{B}(L^{2}(X,\nu))$ that implements the
automorphism $\alpha_{g}$. That is, if $T_{g}$ is the measure
preserving transformation such that $\alpha_{g}(f)=f\circ
T_{g}^{-1}$, $f\in L^{\infty}(X,\nu)$, then $v_{g}a=a\circ
T_{g}^{-1}$ for all $a\in L^{2}(X,\nu)$. It is easy to see that for
$f\in L^{\infty}(X,\nu)$ and $g\in H$,
\begin{align}\label{eq1}
&J\left((1\otimes m_{\widehat{g}})(\sum_{h\in H}\alpha_{h}(\overline{f})\otimes e_{\widehat{h}})\right)=(v_{g}\otimes m_{\widehat{g^{-1}}})\left(\sum_{h\in H}\alpha_{h}(f)\otimes e_{\widehat{h}}\right), \text{ and}\\
\nonumber &J(1\otimes m_{\widehat{g}})J=(v_{g}\otimes
m_{\widehat{g^{-1}}}).
\end{align}

\begin{Theorem}\label{left--right}
The left--right measure $[\eta_{\mid\Delta(\widehat{H})^{c}}]$ of the
masa $L(H)\subset R_{\alpha}$ is the equivalence class of
$S_{*}(\mu\otimes \lambda_{\widehat{H}})$, where $\mu$ is the maximal
spectral type of the unitary representation of $H$ on
$L^{2}(X,\nu)\ominus \mathbb{C}1$ that arises from the action
$\alpha$ and $S:\widehat{H}\times \widehat{H}\rightarrow
\widehat{H}\times \widehat{H}$ is given by
$S(\psi,\chi)=(\chi,\chi\psi)$.
\end{Theorem}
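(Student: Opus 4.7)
The plan is to compute the spectral decomposition of $\mathcal{A}=(L(H)\cup JL(H)J)''$ on $L^{2}(R_{\alpha})\ominus L^{2}(L(H))$ by simultaneously diagonalising the two commuting representations of $H$. Under the GNS realisation from the paragraphs preceding the theorem, $L^{2}(R_{\alpha})=L^{2}(X,\nu)\otimes L^{2}(\widehat{H},\lambda_{\widehat{H}})$ and $L^{2}(L(H))=\mathbb{C}1\otimes L^{2}(\widehat{H},\lambda_{\widehat{H}})$, so the bimodule complement is $(L^{2}(X,\nu)\ominus\mathbb{C}1)\otimes L^{2}(\widehat{H},\lambda_{\widehat{H}})$; identifying $[\eta_{\mid\Delta(\widehat{H})^{c}}]$ amounts to computing the maximal scalar spectral type of $\mathcal{A}$ on this subspace.

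Using SNAG (Theorem \ref{snag}) I realise $L^{2}(X,\nu)\ominus\mathbb{C}1\cong\int^{\oplus}_{\widehat{H}}\mathcal{H}_{\psi}\,d\mu(\psi)$, with $v_{g}$ acting on the fiber $\mathcal{H}_{\psi}$ as multiplication by $\overline{\psi(g)}$. Tensoring with the standard decomposition of $L^{2}(\widehat{H},\lambda_{\widehat{H}})$ in which $m_{\widehat{g}}$ acts fiberwise as multiplication by $\chi(g)$ gives $\int^{\oplus}_{\widehat{H}\times\widehat{H}}\mathcal{H}_{\psi}\,d\mu(\psi)\,d\lambda_{\widehat{H}}(\chi)$ as in equation \eqref{doubledint}. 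The left action of $u_{g_{1}}\in L(H)$ is $1\otimes m_{\widehat{g_{1}}}$, which on the fiber at $(\psi,\chi)$ is multiplication by $\chi(g_{1})=\widehat{g_{1}}(\chi)$, hence produces spectral parameter $\chi$. The right action of $u_{g_{2}}$ is $Ju_{g_{2}}^{*}J=J(1\otimes m_{\widehat{g_{2}^{-1}}})J=v_{g_{2}^{-1}}\otimes m_{\widehat{g_{2}}}$ by equation \eqref{eq1}; on the same fiber this is multiplication by $\overline{\psi(g_{2}^{-1})}\chi(g_{2})=(\psi\chi)(g_{2})=\widehat{g_{2}}(\psi\chi)$, producing spectral parameter $\psi\chi$. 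Consequently $\mathcal{A}$ is unitarily equivalent to $L^{\infty}(\widehat{H}\times\widehat{H})$ acting with scalar spectral type $[S_{*}(\mu\otimes\lambda_{\widehat{H}})]$, where $S(\psi,\chi)=(\chi,\psi\chi)$.

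To realise this class as an explicit $\eta_{\zeta}$, I pick $\zeta$ in the bimodule complement with fibers $\zeta(\psi,\chi)\in\mathcal{H}_{\psi}$ of positive norm $(\mu\otimes\lambda_{\widehat{H}})$-a.e.\ (possible by separability). Since trigonometric polynomials $\sum c_{g}\widehat{g}$ are dense in $C(\widehat{H})$ by Stone--Weierstrass, equation \eqref{measure_from_kappa} is determined by its values on $a=\widehat{g_{1}}$, $b=\widehat{g_{2}}$, which reduce via the previous paragraph to
\begin{align*}
\langle u_{g_{1}}\zeta u_{g_{2}},\zeta\rangle=\int_{\widehat{H}\times\widehat{H}} \widehat{g_{1}}(\chi)\,\widehat{g_{2}}(\psi\chi)\,\|\zeta(\psi,\chi)\|^{2}_{\mathcal{H}_{\psi}}\,d\mu(\psi)\,d\lambda_{\widehat{H}}(\chi).
\end{align*}
Changing variables along $S$ shows $\eta_{\zeta}=S_{*}\bigl(\|\zeta\|^{2}\,d\mu\,d\lambda_{\widehat{H}}\bigr)\sim S_{*}(\mu\otimes\lambda_{\widehat{H}})$. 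Finally, ergodicity of $\alpha$ rules out nonzero $H$-fixed vectors in $L^{2}(X,\nu)\ominus\mathbb{C}1$, forcing $\mu(\{e\})=0$ where $e$ is the trivial character; since $S^{-1}(\Delta(\widehat{H}))=\{e\}\times\widehat{H}$, the diagonal is $S_{*}(\mu\otimes\lambda_{\widehat{H}})$-null, so restriction to $\Delta(\widehat{H})^{c}$ preserves the class. The main obstacle is the bookkeeping that correctly identifies the two spectral parameters via equation \eqref{eq1}; once the formula $J(1\otimes m_{\widehat{g}})J=v_{g}\otimes m_{\widehat{g^{-1}}}$ is deployed, the rest is a direct SNAG-plus-Fubini calculation.
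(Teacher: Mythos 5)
Your proposal is correct and follows essentially the same route as the paper: both rest on the GNS realization on $L^{2}(X,\nu)\otimes L^{2}(\widehat{H},\lambda_{\widehat{H}})$, the commutation relation \eqref{eq1}, and the SNAG theorem, leading to the same change of variables $S$. Your direct-integral bookkeeping of the two spectral parameters (and the observation that ergodicity kills the diagonal) simply repackages the paper's explicit inner-product computations \eqref{maj}--\eqref{major1} and its subsequent approximation argument via Lemma 5.7 of \cite{DSS}.
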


\begin{proof}
Let $\tau_{\mathcal{R}_{\alpha}}$ denote the faithful normal tracial state of 
$\mathcal{R}_{\alpha}$. It is clear that $\widehat{H}\times \widehat{H}$ is the natural
space where the left--right measure is to be built. Write
$\eta_{\mid\Delta(\widehat{H})^{c}}=\eta$. For $f\in
L^{\infty}(X,\nu)$ and $g\in H$, write $\alpha(f)=\sum_{h\in
H}\alpha_{h}(f)\otimes e_{\widehat{h}}$ and $w_{g}=1\otimes
m_{\widehat{g}}$. The operator $w_{g}$ is canonically identified with the 
function $\widehat{g}\in C(\widehat{H})$. For $i=1,2$, fix $f_{i}\in
L^{\infty}(X,\nu)$ and $h_{i}\in H$. Now for $g_{1},g_{2}\in H$,
\begin{align}\label{maj}
\langle
w_{g_{1}}\alpha({f}_{1})w_{h_{1}}w_{g_{2}},\text{ }\alpha({f}_{2})w_{h_{2}}\rangle_{\tau_{R_{\alpha}}}&=\int_{\widehat{H}\times\widehat{H}}\widehat{g_{1}}(\psi)\widehat{g_{2}}(\chi)\widehat{h_{1}h_{2}^{-1}}(\chi)d\eta_{\alpha({f}_{1}),\alpha({f}_{2})}(\psi,\chi).
\end{align}
On the other hand,
\begin{align}\label{major1}
&\langle
w_{g_{1}}\alpha({f}_{1})w_{h_{1}}w_{g_{2}},\text{ }\alpha({f}_{2})w_{h_{2}}\rangle_{\tau_{R_{\alpha}}}\\
\nonumber=& \langle w_{g_{1}} Jw_{g_{2}^{-1}}J(\alpha({f}_{1})w_{h_{1}}),\text{ }(\alpha({f}_{2})w_{h_{2}})\rangle_{\tau_{R_{\alpha}}} \\
\nonumber =&\langle (1\otimes m_{\widehat{g_{1}g_{2}}})\alpha(v_{g_{2}^{-1}}f_{1})w_{h_{1}},\text{ }\alpha(f_{2})w_{h_{2}}\rangle_{\tau_{R_{\alpha}}} \text{ (by equation  }\eqref{eq1})\\
\nonumber =&\tau_{R_{\alpha}}\left(w_{g_{1}g_{2}}\alpha(v_{g_{2}^{-1}}f_{1})w_{h_{1}h_{2}^{-1}}(\alpha(f_{2}))^{*}\right) \\
\nonumber=&\tau_{R_{\alpha}}\left(w_{g_{1}g_{2}h_{1}h_{2}^{-1}}\alpha(v_{g_{2}^{-1}h_{1}h_{2}^{-1}}(f_{1}))(\alpha({f}_{2}))^{*}\right)\\
\nonumber=&\tau_{R_{\alpha}}(w_{g_{1}g_{2}h_{1}h_{2}^{-1}})\langle v_{g_{2}^{-1}h_{1}h_{2}^{-1}}f_{1},f_{2}\rangle_{L^{2}(X,\nu)} \text{ (by orthogonality of algebras)}\\
\nonumber=&\int_{\widehat{H}}\chi(g_{1}g_{2})\chi(h_{1}h_{2}^{-1})d\lambda_{\widehat{H}}(\chi)\int_{\widehat{H}}\overline{\psi(g_{2}^{-1}h_{1}h_{2}^{-1})}d\mu_{f_{1},f_{2}}(\psi) \text{ (by SNAG Theorem)}\\
\nonumber=&\int_{\widehat{H}\times \widehat{H}}\text{
}\psi(g_{2})\overline{\psi(h_{1}h_{2}^{-1})}\chi(g_{1})\chi(g_{2})\chi(h_{1}h_{2}^{-1})d\mu_{f_{1},f_{2}}(\psi)d\lambda_{\widehat{H}}(\chi).
\end{align}
\indent Let $S:\widehat{H}\times \widehat{H}\rightarrow \widehat{H}\times \widehat{H}$ be given by
$S(\psi,\chi)=(\chi,\chi\psi)$. Note that $S$ is bijective. As
discussed before, let $f_{0}\in L^{2}(X,\nu)\ominus \mathbb{C}1$ be
the vector such that $\mu_{f_{0}}=\mu$. Now for $f\in
L^{\infty}(X,\nu)$, from \eqref{maj} and \eqref{major1}, we have
\begin{align}
\nonumber (\mu_{f}\otimes\lambda_{\widehat{H}})\circ
S^{-1}=\eta_{\alpha(f)}.
\end{align}
\indent Let $n\in \mathbb{N}$ and $f_{i}\in L^{\infty}(X,\nu)$, $h_{i}\in H$
for $1\leq i\leq n$ be such that
$\mathbb{E}_{L(H)}(\sum_{i=1}^{n}\alpha(f_{i})w_{h_{i}})=0$. Then, 
$\int_{X}f_{i}d\nu=0$ and from equation \eqref{maj} it follows that
\begin{align}
\nonumber
d\eta_{\sum_{i=1}^{n}\alpha({f}_{i})w_{h_{i}}}=\sum_{i,j=1}^{n}(1\otimes
\widehat{h_{i}h_{j}^{-1}})d\eta_{\alpha(f_{i}),\alpha(f_{j})}.
\end{align}
Thus, $\eta_{\sum_{i=1}^{n}\alpha({f}_{i})w_{h_{i}}}\ll
\eta_{\alpha(\sum_{i=1}^{n}{f}_{i})}=(
\mu_{\sum_{i=1}^{n}{f}_{i}}\otimes\lambda_{\widehat{H}})\circ S^{-1}\ll
(\mu_{f_{0}}\otimes\lambda_{\widehat{H}})\circ S^{-1}$. Note that
$((\mu_{f_{0}}\otimes\lambda_{\widehat{H}})\circ
S^{-1})(\Delta(\widehat{H}))=0$. There is a nonzero vector $\zeta\in
L^{2}(R_{\alpha})\ominus L^{2}(\widehat{H},\lambda_{\widehat{H}})$
such that $\eta_{\zeta}=\eta$. By an easy approximation argument it
follows that $\eta\ll (\mu_{f_{0}}\otimes\lambda_{\widehat{H}})\circ S^{-1}$ 
$($\cite[Lemma $3.10$]{Muk1}$)$. Note that equations \eqref{maj}, \eqref{major1}
extend to functions $f_{i}\in L^{2}(X,\nu)$, in particular these equations are valid
for $f_{1}=f_{2}=f_{0}$. Working similarly with $f_{1}=f_{0}$ and
$f_{2}=f_{0}$ in equations \eqref{maj}, \eqref{major1}, one checks that
$(\mu_{f_{0}}\otimes\lambda_{\widehat{H}})\circ
S^{-1}=\eta_{f_{0}\otimes 1}$. Thus, from Lemma 5.7 \cite{DSS}
conclude that $[(\mu_{f_{0}}\otimes\lambda_{\widehat{H}})\circ
S^{-1}]=[\eta]$.\\
\indent Finally, from equation \eqref{doubledint},
\begin{align}\label{calpuk}
L^{2}(R_{\alpha})\ominus
L^{2}(\widehat{H},\lambda_{\widehat{H}})\cong
\int_{\widehat{H}\times
\widehat{H}}^{\oplus}\h_{\chi\psi^{-1}}dS_{*}(\mu\otimes\lambda_{\widehat{H}})(\psi,\chi)
\end{align}
and $(L(H)\cup JL(H)J)^{\prime\prime}(1-e_{L(H)})$ is diagonalizable with
respect to this decomposition.
\end{proof}

\begin{Remark}
\emph{Theorem \ref{left--right} will be used in the next section to distinguish 
mixing masas in the free group factors}.
\end{Remark}

\begin{Remark}\label{DynamNoexist}
\emph{It is a long standing open question in ergodic theory that, 
whether there exists a measure preserving automorphism of a Lebesgue probability 
space whose maximal spectral type is absolutely continuous but not Lebesgue. 
There are philosophies that suggest that the answer could go either way. 
For an excellent account of Koopman--realizable $($through a $\mathbb{Z}$--action$)$ 
measures and multiplicities check \cite{KatL}. Observe that Theorem \ref{left--right}, 
Theorem \ref{inclusion_of_subgroups} and Corollary \ref{notexist} say that such 
a Koopman--realizable measure does not exist provided we restrict ourselves to a 
smaller class of dynamical systems, namely, those that arise as semidirect 
products of groups.}
\end{Remark}

\begin{Corollary}\label{formal}
Let $\Gamma$ be any countable discrete group such that $\Gamma_{0}\leq Aut(\Gamma)$,  
where $\Gamma_{0}$ is a countable discrete torsion-free abelian group. Let 
the canonical action of $\Gamma_{0}$ on $L(\Gamma)$ be mixing. Then the maximal spectral 
type of the $\Gamma_{0}$--action is Lebesgue.
\end{Corollary}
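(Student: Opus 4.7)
The plan is to combine Theorem \ref{inclusion_of_subgroups}(ii) with Theorem \ref{left--right}, applied to the semidirect product realization. Since $\Gamma_0$ acts on $\Gamma$ by automorphisms we may form the semidirect product $\Gamma\rtimes_{\alpha}\Gamma_0$, whose group von Neumann algebra factors as $L(\Gamma\rtimes_{\alpha}\Gamma_0)\cong L(\Gamma)\rtimes_{\alpha}\Gamma_0$, with $L(\Gamma_0)$ embedded naturally as a masa via the canonical unitaries $\{u_g : g\in\Gamma_0\}$. The hypothesis that the $\Gamma_0$-action on $L(\Gamma)$ is mixing, together with the Jolissaint--Stalder theorem recalled in the introduction, guarantees that $L(\Gamma_0)\subset L(\Gamma\rtimes_{\alpha}\Gamma_0)$ is a mixing masa.

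Since $\Gamma_0$ is torsion-free, Theorem \ref{inclusion_of_subgroups}(ii) then identifies the left--right measure of this inclusion as the product class $[\lambda\otimes\lambda]$, where $\lambda$ denotes normalized Haar measure on $\widehat{\Gamma_0}$. On the other hand, the same left--right measure coincides, by Theorem \ref{left--right}, with the class $[S_{*}(\mu\otimes\lambda)]$, where $\mu$ is the maximal spectral type of the $\Gamma_0$-representation on $L^2(L(\Gamma))\ominus\mathbb{C}1$ and $S(\psi,\chi)=(\chi,\chi\psi)$. The one point requiring comment is that Theorem \ref{left--right} is formulated for actions on commutative base algebras, whereas $L(\Gamma)$ is typically noncommutative; however the proof given there proceeds entirely through the standard representation, the SNAG theorem, and the unitary implementation of $\alpha$, and so extends verbatim once $L^{\infty}(X,\nu)$ is replaced by an arbitrary separable finite von Neumann algebra equipped with a trace-preserving action of $\Gamma_0$.

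Equating the two descriptions gives $[S_{*}(\mu\otimes\lambda)]=[\lambda\otimes\lambda]$. The map $S$ is a homeomorphism of $\widehat{\Gamma_0}\times\widehat{\Gamma_0}$ with inverse $S^{-1}(\psi',\chi')=(\chi'(\psi')^{-1},\psi')$, and translation invariance of Haar measure gives immediately that $(S^{-1})_{*}(\lambda\otimes\lambda)=\lambda\otimes\lambda$. Pushing the equivalence through $S^{-1}$ yields $[\mu\otimes\lambda]=[\lambda\otimes\lambda]$, and projecting onto the first coordinate (via Fubini) delivers $[\mu]=[\lambda]$, which is the claim.

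The only real obstacle is the extension of Theorem \ref{left--right} to noncommutative base algebras mentioned above; beyond this, the proof is a direct combination of the structural results of Sections 4 and 5, and the corollary should be regarded as the spectral-theoretic reformulation of Corollary \ref{notexist}.
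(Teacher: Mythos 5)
Your argument is exactly the one the paper intends: the corollary is stated without proof as the combination of Theorem \ref{inclusion_of_subgroups}$(ii)$ and Theorem \ref{left--right} applied to $L(\Gamma_{0})\subset L(\Gamma\rtimes_{\alpha}\Gamma_{0})\cong L(\Gamma)\rtimes_{\alpha}\Gamma_{0}$, followed by pushing $[S_{*}(\mu\otimes\lambda)]=[\lambda\otimes\lambda]$ through $S^{-1}$ and projecting to get $[\mu]=[\lambda]$. Your explicit remark that Theorem \ref{left--right} must be extended from commutative base algebras to a general separable tracial base $N$ (which its proof permits verbatim) is a point the paper glosses over, but it does not change the route.
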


\section{Mixing masas in the free group factors}

The understanding of singular masas especially in the free group factors is 
of worth in the subject. For construction of masas in this section, we require substantial techniques 
from ergodic theory. In this section, we exhibit uncountably many  non conjugate mixing
masas in the free group factors with Puk\'{a}nszky invariant $\{1,\infty\}$.
The general strategy is to construct suitable masas in finite amenable
von Neumann algebras and appeal to a well known result of Dykema regarding free
products \cite{Dy}.\\
\indent Recall that the rank of a measure preserving
automorphism on a standard probability space is greater than or
equal to the spectral multiplicity of the associated Koopman
operator \cite[p. 31]{Nad}. Thus, if the rank of a mixing automorphism is
one, then the spectral multiplicity of the associated Koopman
operator must also  be one. In the previous section, we have related the
spectral multiplicity of a transformation to the Puk\'{a}nszky invariant of the 
associated masa $($along the direction of the group$)$ in the group measure space construction.\\
\indent A rank one measure preserving transformation $T$ of the unit interval $[0,1]$
is constructed by the method of cutting and stacking
\cite{Fri,K-R,Nad}. These are transformations which admit a sequence of \emph{Rokhlin towers} generating the entire 
$\sigma$--algebra. We will assume that the reader is familiar
with the notion of cutting and stacking. The \emph{classical staircase
transformation} is one in which, at the $k$--th stage, one divides the
$(k-1)$--th stack into $k$ equal columns and put $j$ spacers over the
$j$--th column, $1\leq j\leq k$, which is why it is called a
staircase \cite[p. 153]{Nad}. The next result will be used in constructing 
masas in the free group factors, but it is also of some independent 
interest as well. 

\begin{Theorem}\label{SingHyperfinite}
There exists a mixing masa $B$ in the hyperfinite $\rm{II}_{1}$ factor $R$ whose Puk\'{a}nszky invariant
is $\{1\}$ and whose left--right measure is singular.
\end{Theorem}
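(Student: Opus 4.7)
The plan is to realize the desired masa as the copy of $L(\mathbb{Z})$ inside a crossed product $L^{\infty}(X,\nu) \rtimes_{T} \mathbb{Z}$ for a judiciously chosen transformation $T$, and then read off all three required properties from Theorem \ref{action_sm} and Theorem \ref{left--right}. The essential ergodic-theoretic input is a measure-preserving transformation $T$ of a standard Lebesgue probability space $(X,\nu)$ which is simultaneously (a) rank one, (b) mixing, and (c) has singular maximal spectral type with respect to Haar measure on $\widehat{\mathbb{Z}} = S^{1}$. Such $T$ exist: Ornstein's original construction of rank-one mixing transformations, and more concretely the Adams classical staircase, are mixing, and by work of Bourgain (with subsequent refinements) their maximal spectral type $\mu$ on $S^{1}$ is mutually singular with $\lambda$. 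Rank-one moreover forces the spectral multiplicity of the associated Koopman operator on $L^{2}(X,\nu) \ominus \mathbb{C}1$ to be identically $1$.

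Next, form $R = L^{\infty}(X,\nu) \rtimes_{T} \mathbb{Z}$ and set $B = L(\mathbb{Z}) \subset R$. Freeness, ergodicity, and measure preservation of $T$ on a diffuse base imply that $R$ is the hyperfinite $\rm{II}_{1}$ factor and that $B$ is a masa. Since $T$ is mixing as a dynamical system, the Jolissaint--Stalder theorem cited in the introduction gives at once that $B$ is a mixing masa. Theorem \ref{left--right} then identifies the left--right measure class of $B$ with $[S_{*}(\mu \otimes \lambda)]$, where $S(\psi,\chi) = (\chi,\chi\psi)$; and the direct integral decomposition \eqref{calpuk} at the end of that proof shows the multiplicity function at $(\psi,\chi)$ equals the spectral multiplicity of $T$ at the character $\chi\psi^{-1}$, which by rank-one is $1$ almost everywhere. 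Hence $Puk(B) = \{1\}$.

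For singularity of the left--right measure, I would choose a Borel set $N \subset S^{1}$ with $\mu(N) = 1$ and $\lambda(N) = 0$, and set $E = S(N \times S^{1}) = \{(\chi,\chi\psi) : \chi \in S^{1},\ \psi \in N\}$. Then $(\mu \otimes \lambda)(S^{-1}(E)) = \mu(N)\,\lambda(S^{1}) = 1$, so $S_{*}(\mu \otimes \lambda)(E) = 1$; on the other hand, translation invariance of Haar measure on $S^{1}$ gives $(\lambda \otimes \lambda)(E) = \int_{S^{1}} \lambda(\chi N)\,d\lambda(\chi) = \lambda(N) = 0$. Thus $S_{*}(\mu \otimes \lambda)$ and $\lambda \otimes \lambda$ are mutually singular, and the left--right measure of $B$ is singular as required.

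The operator-algebraic content of the argument is entirely absorbed by Theorems \ref{action_sm} and \ref{left--right}; the genuine difficulty lies in the ergodic-theoretic ingredient, namely the existence of a rank-one mixing transformation with singular maximal spectral type. This is what rules out naive replacements such as a Bernoulli shift, whose maximal spectral type is Lebesgue of infinite multiplicity and would give both the wrong Puk\'anszky invariant and a product-equivalent left--right measure. Once this deep input is granted, the three properties (mixing, $Puk = \{1\}$, singular left--right measure) fall out by the dictionary developed in Sections 4 and 5.
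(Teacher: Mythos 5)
Your proof is correct and follows essentially the same route as the paper: the paper also takes the classical staircase transformation (mixing and rank one by Adams), forms $L(\mathbb{Z})\subset L^{\infty}([0,1],\lambda)\rtimes_{T}\mathbb{Z}$, invokes Jolissaint--Stalder for mixing, reads off $Puk=\{1\}$ from equation \eqref{calpuk}, and uses the singularity of the staircase's maximal spectral type (a generalized Riesz product, due to Klemes rather than Bourgain, who treated Ornstein's random constructions) together with Theorem \ref{left--right}. Your explicit verification that $S_{*}(\mu\otimes\lambda)\perp\lambda\otimes\lambda$ via the set $S(N\times S^{1})$ is a detail the paper leaves implicit, and it is correct.
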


\begin{proof}
The classical staircase automorphism $T$ is mixing and of rank one \cite[p. 744]{Ad}.
Consequently, $L(\mathbb{Z})\subset R_{T}$ is a  mixing masa in the hyperfinite $\rm{II}_{1}$ factor
$R_{T}=L^{\infty}([0,1],\lambda)\rtimes_{T}\mathbb{Z}$, where $\lambda$ is the Lebesgue measure on $[0,1]$ \cite{J-S}. The Puk\'{a}nszky invariant of $L(\mathbb{Z})\subset R_{T}$ is $\{1\}$ from equation
\eqref{calpuk}. Write $B=L(\mathbb{Z})$.

The maximal spectral type of the staircase transformation is given by a Reisz product, which is
known to be singular \cite{Kle} $($also see p. 154 \cite{Nad}$)$. Thus from Theorem~\ref{left--right}, the
left--right measure of $B\subset R_{T}$ is singular to the
product class.
\end{proof}

\begin{Theorem}\label{arbitrary}
Let $k\in \left\{2,3,\ldots,\infty\right\}$ and let $\Gamma$ be any
countable discrete group. There exist uncountably many pairwise non conjugate
mixing masas in $L(\mathbb{F}_{k}*\Gamma)$ whose
Puk\'{a}nszky invariant is $\{1,\infty\}$.
\end{Theorem}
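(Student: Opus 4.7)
The approach is to start with mixing rank-one $\mathbb{Z}$-actions on $([0,1],\lambda)$ whose maximal spectral types are pairwise mutually singular, use them as in Theorem \ref{SingHyperfinite} to produce an uncountable family of mixing masas in the hyperfinite factor $R$ with prescribed left--right measures, and then transport these masas into $L(\mathbb{F}_k*\Gamma)$ by Dykema's free-product absorption while tracking the measure--multiplicity invariant.

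First, I would invoke the ergodic-theoretic fact that there exists an uncountable family $\{T_\alpha\}_{\alpha\in I}$ of rank-one mixing, measure-preserving automorphisms of $([0,1],\lambda)$ whose maximal spectral types $\mu_\alpha$ on $\widehat{\mathbb{Z}}=S^1$ are pairwise mutually singular (obtained, for instance, by varying parameters in generalized staircase constructions and combining singularity results of Klemes--Reinhold and Bourgain). For each $\alpha$ set $R_\alpha = L^\infty([0,1],\lambda)\rtimes_{T_\alpha}\mathbb{Z}\cong R$ and $B_\alpha = L(\mathbb{Z})\subset R_\alpha$. By Jolissaint--Stalder each $B_\alpha$ is a mixing masa, by equation \eqref{calpuk} its Puk\'anszky invariant is $\{1\}$, and by Theorem \ref{left--right} its left--right measure is the class of $S_*(\mu_\alpha\otimes\lambda_{S^1})$, which by mutual singularity of the $\mu_\alpha$ gives pairwise singular left--right measures on the hyperfinite side.

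Second, using Dykema's free-absorption theorem I would write $L(\mathbb{F}_k*\Gamma)\cong R_\alpha * N$ for a fixed tracial von Neumann algebra $N$ (independent of $\alpha$, since $R_\alpha\cong R$). Thus each $B_\alpha$ embeds as a masa in $L(\mathbb{F}_k*\Gamma)$. Invoking standard results on masas in free products (e.g., from \cite{CFM}), the masa $B_\alpha$ remains mixing in $L(\mathbb{F}_k*\Gamma)$, and the free-product words contribute coarse $B_\alpha$--$B_\alpha$ bimodules of infinite multiplicity orthogonal to $L^2(R_\alpha)\ominus L^2(B_\alpha)$. Combining with the Puk\'anszky computation above, one concludes $\mathrm{Puk}_{L(\mathbb{F}_k*\Gamma)}(B_\alpha)=\{1\}\cup\{\infty\}=\{1,\infty\}$.

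Third, to separate the family up to automorphisms of $L(\mathbb{F}_k*\Gamma)$ I would use that the measure--multiplicity invariant $m.m(\cdot)$ of Definition \ref{mminv} is an automorphism invariant. Within $L(\mathbb{F}_k*\Gamma)$ the multiplicity function of $B_\alpha$ partitions $S^1\times S^1$ into a set where $m=1$ (the residue of the original hyperfinite bimodule) and a set where $m=\infty$ (the coarse contribution from free words). The restriction of the left--right measure of $B_\alpha$ in $L(\mathbb{F}_k*\Gamma)$ to the multiplicity-$1$ stratum coincides, up to the bijection $S$ of Theorem \ref{left--right}, with $\mu_\alpha\otimes\lambda_{S^1}$. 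Mutual singularity of the $\mu_\alpha$ then forces mutual singularity of these restrictions, so no automorphism of $L(\mathbb{F}_k*\Gamma)$ can send $B_\alpha$ to $B_\beta$ when $\alpha\neq\beta$. The main obstacle is the bookkeeping in this last step: establishing the direct-sum decomposition of $L^2(L(\mathbb{F}_k*\Gamma))\ominus L^2(B_\alpha)$ into the multiplicity-$1$ part inherited from $R_\alpha$ and a coarse free-product part, so that the singular invariant $[\mu_\alpha]$ can be extracted canonically from $m.m(B_\alpha)$. Once this free-product absorption/decomposition is justified, the pairwise non-conjugacy follows immediately.
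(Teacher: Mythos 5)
Your overall architecture (mixing rank-one system $\to$ mixing masa in $R$ with Puk\'anszky invariant $\{1\}$ and singular left--right measure $\to$ Dykema free absorption into $L(\mathbb{F}_k*\Gamma)$ $\to$ separation via the measure--multiplicity invariant) is exactly the paper's, and steps two and three of your outline are essentially correct: the free-product bookkeeping you worry about is handled by Lemma 5.7, Proposition 5.10 and Theorem 3.2 of \cite{DSS}, which give the left--right measure of $B_\alpha$ in the free product as $[\mathbf{m}\otimes\mathbf{m}+(\text{singular part from }R)]$ with multiplicity $\infty$ on the Lebesgue component and $1$ on the singular component, and the singular stratum is canonically extractable since automorphisms preserve the trace (hence the tracial measure $\mathbf{m}$, hence the Lebesgue decomposition relative to $\mathbf{m}\otimes\mathbf{m}$).

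The genuine gap is your first step. You need an \emph{uncountable} family of rank-one \emph{mixing} transformations whose maximal spectral types are \emph{pairwise mutually singular}, and you assert this can be assembled from Klemes--Reinhold and Bourgain. Those results give singularity of each spectral type with respect to Lebesgue measure; they do not give mutual singularity of the spectral types of two different such transformations, which for the generalized (polynomial) Riesz products arising here is a delicate and, to a large extent, open problem. The paper's own closing remark -- that it is not even known whether the maximal spectral types of Danilenko's mixing transformations are singular to Lebesgue -- signals that the authors regard precisely this kind of spectral comparison as out of reach, and their construction is designed to avoid it. What the paper actually does is fix a \emph{single} mixing rank-one system (the classical staircase, Theorem \ref{SingHyperfinite}), form for each strictly decreasing probability vector $\alpha\in\mathbb{P}_{\mathbb{N}}$ the direct sum $B_\alpha=\oplus_n B\subset\oplus_n R=R_\alpha$ with trace weights $\alpha_n$, and then free-multiply by $R$. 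The singular part of the resulting left--right measure is concentrated on a block diagonal $\cup_n E_n\times E_n$ with $\mathbf{m}(E_n)=\alpha_n$, and since automorphisms are trace-preserving, the sequence $\{\alpha_n\}$ is recovered from $m.m(B_\alpha)$; distinct $\alpha$ give non-conjugate masas. So the uncountability comes from varying the weights of a direct sum, not from varying the dynamical system. Unless you can actually supply the uncountable pairwise-mutually-singular family (which would be a new ergodic-theoretic result), your proof does not close; replacing that input with the weighted-direct-sum device repairs it.
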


\begin{proof}
Let
\begin{align}
\mathbb{P}_{\mathbb{N}}=\left\{
\alpha=\{\alpha_{n}\}_{n=1}^{\infty}: \alpha_{n}>\alpha_{n+1},
0<\alpha_{n}<1 \text{ for all }
n,\sum_{n=1}^{\infty}\alpha_{n}=1\right\}.
\end{align}
For $\alpha,\beta \in \mathbb{P}_{\mathbb{N}}$, say $\alpha\neq
\beta$ if $\alpha_{n}\neq \beta_{n}$ for some $n$. Fix $\alpha\in
\mathbb{P}_{\mathbb{N}}$.\\
\indent  Let $R_{\alpha}=\oplus_{n=1}^{\infty}R$ and $B_{\alpha}=\oplus_{n=1}^{\infty}B$, where
$R$ and $B$ are as in Theorem \ref{SingHyperfinite}. The projections 
$p_{n}=(0\oplus \cdots \oplus 0\oplus 1\oplus
0\oplus \cdots)$, where $1$ appears at the $n$--th coordinate, is a
central projection of $R_{\alpha}$ and it belongs to
$B_{\alpha}$. Equip $R_{\alpha}$ with the
faithful trace
\begin{align*}
\tau_{R_{\alpha}}(\cdot)=\sum_{n=1}^{\infty}\alpha_{n}\tau_{R}(\cdot p_{n}),
\end{align*}
where $\tau_{R}$ denotes the unique tracial state of $R$. 
Then $B_{\alpha}$ is a mixing masa in the
hyperfinite algebra $R_{\alpha}$ and the latter is separable. The last
statement is a simple application of dominated convergence theorem. \\
\indent  The projections $p_{n}$ correspond to indicator of
measurable subsets $E_{n}\subset (S^{1},\textbf{m})$, so that $E_{n}\cap
E_{m}$ is a set of $\textbf{m}$ measure $0$ for all $n\neq m$ 
$($where $\textbf{m}$ is the normalized Haar measure on $S^{1})$. Upon
applying appropriate transformations, the left--right measure
of $B\subset R$ can be transported to each $E_{n}\times E_{n}$,
which is denoted by $[\eta_{n}]$. We also assume
$\eta_{n}(E_{n}\times E_{n})=1$ for all $n$.\\
\indent Consider
$(M,\tau_{M})=(R_{\alpha},\tau_{R_{\alpha}})*(R,\tau_{R})$. Then $M$
is isomorphic to $L(\mathbb{F}_{2})$ by a well known theorem of
Dykema \cite{Dy}. Note that $B_{\alpha}\subset L(\mathbb{F}_{2})$ is a
mixing masa by Propositions 6.1, 6.5 \cite{CFM}. 
The left--right measure
of the inclusion $B_{\alpha}\subset L(\mathbb{F}_{2})$ is
\begin{align}
\nonumber[\textbf{m}\otimes\textbf{m} +
\sum_{n=1}^{\infty}\frac{1}{2^{n}}\eta_{n}]
\end{align}
and $Puk_{L(\mathbb{F}_{2})}(B_{\alpha})=\{1,\infty\}$ from
Proposition $5.10$ and Theorem $3.2$ \cite{DSS} $($also see \cite{Muk2}$)$.\\
\indent Since automorphisms of $\rm{II}_{1}$ factors are trace
preserving, the non conjugacy of $B_{\alpha}$ and $B_{\beta}$ in
$L(\mathbb{F}_{2})$ for $\alpha\neq \beta$ follows clearly by
considering their left--right measures in the measure--multiplicity invariant.\\
\indent There exist isomorphisms \cite{Dy}
\begin{align}
\nonumber & L(\mathbb{F}_{2})*L(\mathbb{F}_{k-2}*\Gamma)\cong
L(\mathbb{F}_{k}*\Gamma) \text{ for }k\geq 2.
\end{align}
For $k\geq 2$, each $B_{\alpha}$ is a mixing masa $($Propositions 6.1, 6.5 \cite{CFM}$)$ in
$L(\mathbb{F}_{k}*\Gamma)$ with
$Puk_{L(\mathbb{F}_{k}*\Gamma)}(B_{\alpha})=\{1,\infty\}$
\cite{DSS}. Use Lemma 5.7, Proposition 5.10 \cite{DSS} to decide the non
conjugacy of $B_{\alpha}$ and $B_{\beta}$ when $\alpha\neq \beta$ in
the free product.
\end{proof}

\begin{Remark}
\emph{It is difficult to distinguish between two mixing masas in the free group factors. 
If two masas in a free group factor are of product class, then it becomes a significantly 
difficult problem $($e.g., the conjugacy of the Laplacian masa and the generator masas is a 
challenging problem \cite{DyMu}$)$. Maximal injectivity of masas can be used but that too in very 
limited cases. The left--right measure of any masa in the free group factors always 
contains a part of the product measure as a summand \cite{Voi}. This statement 
is one of the deep results in the subject. So, the singular summand of the 
left--right measure is a plausible candidate that can distinguish two masas 
with same Puk\'{a}nszky invariant. It is a very common idea to build a masa in 
a free group factor by starting with a masa in the hyperfinite $\rm{II}_{1}$ factor. 
But mixing masas in the hyperfinite $\rm{II}_{1}$ factor arising from ergodic group actions
are also rare. The set of mixing transformations is meager in the $($Polish$)$ weak 
topology on the group of all measure preserving transformations. In \cite{Ti}, a 
Polish topology strictly stronger than the induced weak topology was introduced 
and it was shown that a generic mixing transformation has multiplicity $\{1\}$. 
Nevertheless, many more values of the multiplicity function of mixing transformations 
were obtained by Danilenko in \cite{Da}. But it is yet not known whether the maximal 
spectral types of these transformations in \cite{Da} are singular to Lebesgue measure. 
In case they are, our technique applies to construct more mixing masas in 
the free group factors.}
\end{Remark}

\appendix
\section{A Technical Result}

As before, let $A$ be a masa in a $\rm{II}_{1}$ factor $M$. We continue to assume 
that $A\cong L^{\infty}([0,1],\lambda)$, where $\lambda$ is the Lebesgue measure. 
The next Lemma was remarked in \cite{Muk2} under a stronger hypothesis. We assume 
the theory of $L^{1}$ spaces associated to finite von Neumann algebras for which 
we refer the reader to \cite{S-S2}.

\begin{Lemma}\label{Lemma:appendix}
Let the left--right measure of
$A$ be the class of product measure. Let $v\in A$ be the Haar unitary generator corresponding to the function $[0,1]\ni t\mapsto e^{2\pi it}$. Then the following are equivalent.\\
\noindent $(i)$ There is a set $S\subset
L^{2}(M)\ominus L^{2}(A)$ such that $\text{ span }S$ is
dense in $L^{2}(M)\ominus L^{2}(A)$, and 
\begin{align}
\nonumber \sum_{k\in \mathbb{Z}}\norm{\mathbb{E}_{A}(\zeta
v^{k}\zeta^{*})}_{2}^{2}<\infty \text{ for all }\zeta\in S.
\end{align}
\noindent $(ii)$ There is a set $S^{\prime}\subset L^{2}(M)\ominus L^{2}(A)$ such that 
$S^{\prime}$ is dense in $L^{2}(M)\ominus L^{2}(A)$, and
\begin{align}
\nonumber \sum_{k\in \mathbb{Z}}\norm{\mathbb{E}_{A}(\zeta_{1}
v^{k}\zeta_{2}^{*})}_{2}^{2}<\infty \text{ for all }\zeta_{1},\zeta_{2}\in S^{\prime}.
\end{align}
\end{Lemma}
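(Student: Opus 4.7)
The plan is as follows. The implication $(ii)\Rightarrow (i)$ is immediate by taking $S=S'$. For $(i)\Rightarrow (ii)$, I would let $S':=\mathrm{span}_{\mathbb{C}}(S)$, which is dense in $L^{2}(M)\ominus L^{2}(A)$ since by hypothesis $\mathrm{span}\,S$ already is. Given $\zeta_{1}=\sum_{i=1}^{n}c_{i}\xi_{i}$ and $\zeta_{2}=\sum_{j=1}^{m}d_{j}\eta_{j}$ in $S'$ with $\xi_{i},\eta_{j}\in S$, sesquilinearity of $\mathbb{E}_{A}$ combined with Cauchy--Schwarz applied to the sum of $nm$ terms yields
\begin{align*}
\sum_{k\in\mathbb{Z}}\norm{\mathbb{E}_{A}(\zeta_{1}v^{k}\zeta_{2}^{*})}_{2}^{2}\leq nm\sum_{i,j}|c_{i}|^{2}|d_{j}|^{2}\sum_{k\in\mathbb{Z}}\norm{\mathbb{E}_{A}(\xi_{i}v^{k}\eta_{j}^{*})}_{2}^{2},
\end{align*}
so the task reduces to showing $\sum_{k\in\mathbb{Z}}\norm{\mathbb{E}_{A}(\xi v^{k}\eta^{*})}_{2}^{2}<\infty$ for arbitrary $\xi,\eta\in S$.

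To handle this reduced problem I would pass to Radon--Nikodym densities. Since the left--right measure of $A$ is the product class, \cite[Lemma $5.7$]{DSS} gives $\eta_{\xi},\eta_{\eta}\ll\lambda\otimes\lambda$, and by the polarization identity \eqref{polarize} the complex measure $\eta_{\xi,\eta}$ is absolutely continuous with respect to $\lambda\otimes\lambda$ as well. Writing $f_{\xi},f_{\eta},f_{\xi,\eta}$ for the corresponding densities, the pointwise consequence of \eqref{abscont11} is the bound $|f_{\xi,\eta}|\leq f_{\xi}+f_{\eta}$ almost everywhere $\lambda\otimes\lambda$, which gives
\begin{align*}
\norm{f_{\xi,\eta}}_{L^{2}(\lambda\otimes\lambda)}^{2}\leq 2\norm{f_{\xi}}_{L^{2}(\lambda\otimes\lambda)}^{2}+2\norm{f_{\eta}}_{L^{2}(\lambda\otimes\lambda)}^{2}.
\end{align*}
Next I would use Lemma $3.6$ of \cite{Muk1} to identify the fiber density $d\tilde\eta_{\xi,\eta}^{t}/d\lambda$ with $f_{\xi,\eta}(t,\cdot)$ for $\lambda$ almost every $t$; Cauchy--Schwarz in the $s$--variable then places $\mathbb{E}_{A}(\xi v^{k}\eta^{*})$ in $L^{2}(A)$, and Lemma \ref{identify_disintegrated_measure} together with Plancherel applied slicewise on $L^{2}(\lambda)$ and Fubini--Tonelli give the Parseval identity
\begin{align*}
\sum_{k\in\mathbb{Z}}\norm{\mathbb{E}_{A}(\xi v^{k}\eta^{*})}_{2}^{2}=\int_{X}\sum_{k\in\mathbb{Z}}\abs{\eta_{\xi,\eta}^{t}(1\otimes v^{k})}^{2}d\lambda(t)=\norm{f_{\xi,\eta}}_{L^{2}(\lambda\otimes\lambda)}^{2},
\end{align*}
with the analogous formula for $\xi=\eta$ translating hypothesis $(i)$ into $\norm{f_{\xi}}_{L^{2}(\lambda\otimes\lambda)}<\infty$. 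Combining these two displays closes the estimate.

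I expect the main technical obstacle to lie in the rigorous justification of the Parseval identity above: verifying a priori that $\mathbb{E}_{A}(\xi v^{k}\eta^{*})\in L^{2}(A)$ so that its $2$--norm is meaningful, and then interchanging the sum over $k$ with the integral over $t$. Both steps depend on the product--class hypothesis forcing the densities $f_{\xi},f_{\eta},f_{\xi,\eta}$ to be genuine $L^{1}(\lambda\otimes\lambda)$--functions on which Fubini--Tonelli is applicable. Once these identifications are in hand, the polarization bound \eqref{abscont11} settles the rest of the estimate instantly, and the reduction to $\xi,\eta\in S$ in the first paragraph converts this into the required statement for $\zeta_{1},\zeta_{2}\in S'$.
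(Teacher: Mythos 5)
Your proposal is correct and follows essentially the same route as the paper's own proof: reduce to $S'=\operatorname{span}S$, use \cite[Lemma 5.7]{DSS} and the polarization bound \eqref{abscont11} to get $|f_{\xi,\eta}|\le f_{\xi}+f_{\eta}$ for the densities with respect to $\lambda\otimes\lambda$, translate hypothesis $(i)$ into $f_{\xi}\in L^{2}(\lambda\otimes\lambda)$ via slicewise Parseval, and then run Parseval/Fubini back to bound $\sum_{k}\norm{\mathbb{E}_{A}(\xi v^{k}\eta^{*})}_{2}^{2}$. The technical point you flag (justifying $\mathbb{E}_{A}(\xi v^{k}\eta^{*})\in L^{2}(A)$ by testing against $a\in C[0,1]$ with $\norm{a}_{2}\le 1$) is exactly the step the paper carries out in detail.
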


The vectors $\mathbb{E}_{A}(\zeta_{1}v^{k}\zeta_{2}^{*}), \mathbb{E}_{A}(\zeta
v^{k}\zeta^{*})$ in the statement of the above lemma 
are in $L^{1}(M)$. But in the statement of Lemma \ref{Lemma:appendix}, it is implicit that the sets
$S, S^{\prime}$ can be chosen so that $\mathbb{E}_{A}(\zeta_{1}v^{k}\zeta_{2}^{*}),\mathbb{E}_{A}(\zeta
v^{k}\zeta^{*})\in L^{2}(A)$. Thus, there is no
confusion in considering their $L^{2}$--norms.

\begin{proof}
We have to prove $(i)\Rightarrow (ii)$ only. For $b\in C[0,1]$ and $\zeta\in S$, 
the Fourier series expansion of $b$ and a simple application of 
Cauchy--Schwarz inequality show that $\mathbb{E}_{A}(\zeta b\zeta^{*})\in L^{2}(A)$. 
Indeed, $b=\sum_{k\in \mathbb{Z}}\langle b,v^{k}\rangle v^{k}$ with convergence in 
$\norm{\cdot}_{2}$. Thus, 
\begin{align}
\nonumber \underset{a\in A:\norm{a}_{2}\leq 1}\sup \abs{\tau(\mathbb{E}_{A}(\zeta b\zeta^{*})a)}&\leq 
\underset{a\in A:\norm{a}_{2}\leq 1}\sup \sum_{k\in \mathbb{Z}} \abs{\tau(\mathbb{E}_{A}(\zeta v^{k}\zeta^{*})a)}\abs{\tau(bv^{-k})}\\
\nonumber &\leq \sum_{k\in \mathbb{Z}} \norm{\mathbb{E}_{A}(\zeta v^{k}\zeta^{*})}_{2}\abs{\tau(bv^{-k})}\\
\nonumber &\leq \left(\sum_{k\in \mathbb{Z}} \norm{\mathbb{E}_{A}(\zeta v^{k}\zeta^{*})}_{2}^{2}\right)^{\frac{1}{2}}\norm{b}_{2}<\infty.
\end{align}
This proves the claim.\\
\indent From Lemma 5.7 \cite{DSS}, we have $\eta_{\zeta}\ll\lambda\otimes\lambda$. Then  
Lemma \ref{identify_disintegrated_measure} and standard theory of Fourier series show that:\\
$(a)$ $f_{\zeta}=\frac{d\eta_{\zeta}}{d(\lambda\otimes \lambda)}$ is in $L^{2}(\lambda\otimes\lambda)$ for all $\zeta\in S$,\\
$(b)$ $\norm{\mathbb{E}_{A}(\zeta b\zeta^{*})}_{2}^{2}=\int_{0}^{1}\abs{\lambda(f_{\zeta}(t,\cdot)b)}^{2}d\lambda(t)$ for all $b\in C[0,1]$.\\
\indent To prove $(a)$ observe that 
\begin{align}
\nonumber \int_{0}^{1}\sum_{k\in \mathbb{Z}}\abs{\eta_{\zeta}^{t}(1\otimes v^{k})}^{2}d\lambda(t)=\sum_{k\in \mathbb{Z}} \int_{0}^{1}\abs{\eta_{\zeta}^{t}(1\otimes v^{k})}^{2}d\lambda(t)=\sum_{k\in \mathbb{Z}}\norm{\mathbb{E}_{A}(\zeta
v^{k}\zeta^{*})}_{2}^{2}<\infty.
\end{align}
Thus, for $\lambda$ almost all $t$ one has,
\begin{align}
\nonumber \sum_{k\in \mathbb{Z}}\abs{\int_{0}^{1}f_{\zeta}(t,s)v^{k}(s)d\lambda(s)}^{2}<\infty.
\end{align}
Thus $(a)$ is established upon using Lemma 3.6 \cite{Muk1}. The proof of $(b)$ is an easy 
consequence of $(a)$ and Theorem \ref{identify_disintegrated_measure}.\\
\indent Let $\xi_{1}=\sum_{i=1}^{n}c_{i
}\zeta_{i}^{1}$, $\xi_{2}=\sum_{j=1}^{m}d_{j }\zeta^{2}_{j}$ with
$\zeta_{i}^{1},\zeta^{2}_{j}\in S$, $c_{i},d_{j}\in \mathbb{C}$ for all $1\leq i\leq
n$, $1\leq j\leq m$. Note that for all $i,j$,
\begin{align}
\nonumber \sum_{k\in
\mathbb{Z}}\norm{\mathbb{E}_{A}(\zeta_{i}^{1}v^{k}{\zeta_{i}^{1}}^{*})}_{2}^{2}<\infty,
\sum_{k\in
\mathbb{Z}}\norm{\mathbb{E}_{A}(\zeta_{j}^{2}v^{k}{\zeta_{j}^{2}}^{*})}_{2}^{2}<\infty.
\end{align}
Then by assumption $f_{\zeta_{i}^{1}},f_{\zeta_{j}^{2}}\in
L^{2}(\lambda\otimes \lambda)$ for all $i,j$. From equation \eqref{abscont11}, $\eta_{\zeta_{i}^{1},\zeta_{j}^{2}}\ll \lambda\otimes\lambda$. But because $\abs{\eta_{\zeta_{i}^{1},\zeta_{j}^{2}}}\leq
\eta_{\zeta_{i}^{1}}+\eta_{\zeta_{j}^{2}}$, we conclude that
$f_{\xi_{1},\xi_{2}}=\frac{d\eta_{\xi_{1},\xi_{2}}}{d(\lambda\otimes
\lambda)}\in L^{2}(\lambda\otimes \lambda)$.\\
\indent Fix $a,b\in C[0,1]$.
Then as $\tau$ extends to $L^{1}$,
\begin{align}\label{norm_cal}
\int_{0}^{1}a(t)\mathbb{E}_{A}(\xi_{1} b\xi_{2}^{*})(t)d\lambda(t)&=\tau(a\mathbb{E}_{A}(\xi_{1} b\xi_{2}^{*}))=\tau(a\xi_{1} b\xi_{2}^{*})=\int_{[0,1]\times[0,1]}a(t)b(s)d\eta_{\xi_{1},\xi_{2}}(t,s)\\
\nonumber&=\int_{[0,1]\times[0,1]}a(t)b(s)f_{\xi_{1},\xi_{2}}(t,s)d\lambda(t)d\lambda(s)\\
\nonumber&=\int_{0}^{1}a(t)\lambda(f_{\xi_{1},\xi_{2}}(t,\cdot)b)d\lambda(t).
\end{align}
Now consider the function $[0,1]\ni t\overset{g}\mapsto
\lambda(f_{\xi_{1},\xi_{2}}(t,\cdot)b)$. It is clearly
$\lambda$--measurable and
\begin{align}
\nonumber\int_{0}^{1}\abs{\lambda(f_{\xi_{1},\xi_{2}}(t,\cdot)b)}^{2}d\lambda(t)&=\int_{0}^{1}\abs{\int_{0}^{1}f_{\xi_{1},\xi_{2}}(t,s)b(s)d\lambda(s)}^{2}d\lambda(t)\\
\nonumber&\leq\norm{b}^{2}\int_{0}^{1}\left(\int_{0}^{1}\abs{f_{\xi_{1},\xi_{2}}(t,s)}d\lambda(s)\right)^{2}d\lambda(t)\\
\nonumber&\leq\norm{b}^{2}\int_{0}^{1}\int_{0}^{1}\abs{f_{\xi_{1},\xi_{2}}(t,s)}^{2}d\lambda(t)d\lambda(s)<\infty.
\end{align}
Therefore, from equation \eqref{norm_cal} we get,
\begin{align}
\nonumber\underset{a\in C[0,1], \norm{a}_{2}\leq 1}\sup\abs{\int_{0}^{1}a(t)\mathbb{E}_{A}(\xi_{1} b\xi_{2}^{*})(t)d\lambda(t)}&=\underset{a\in C[0,1], \norm{a}_{2}\leq 1}\sup\abs{\int_{0}^{1}a(t)\lambda(f_{\xi_{1},\xi_{2}}(t,\cdot)b)d\lambda(t)}\\
\nonumber
&=\left(\int_{0}^{1}\abs{\lambda(f_{\xi_{1},\xi_{2}}(t,\cdot)b)}^{2}d\lambda(t)\right)^{\frac{1}{2}}<\infty.
\end{align}
\noindent Thus, it follows that $\mathbb{E}_{A}(\xi_{1}
b\xi_{2}^{*})\in L^{2}(A)$ and
\begin{align}
\nonumber \norm{\mathbb{E}_{A}(\xi_{1}
b\xi_{2}^{*})}_{2}^{2}=\int_{0}^{1}\abs{\lambda(f_{\xi_{1},\xi_{2}}(t,\cdot)b)}^{2}d\lambda(t).
\end{align}
\indent Consequently, from Lemma  \ref{identify_disintegrated_measure} we have
\begin{align}
\nonumber \sum_{k\in \mathbb{Z}}\norm{\mathbb{E}_{A}(\xi_{1}v^{k}\xi_{2}^{*})}_{2}^{2}&=\sum_{k\in \mathbb{Z}}\int_{0}^{1}\abs{\eta_{\xi_{1},\xi_{2}}^{t}(1\otimes v^{k})}^{2}d\lambda(t)=\int_{0}^{1}\sum_{k\in \mathbb{Z}}\abs{\eta_{\xi_{1},\xi_{2}}^{t}(1\otimes v^{k})}^{2}d\lambda(t)\\
\nonumber &=\int_{0}^{1}\sum_{k\in \mathbb{Z}}\abs{\int_{0}^{1}f_{\xi_{1},\xi_{2}}(t,s)v^{k}(s)d\lambda(s)}^{2}d\lambda(t) \text{ }\text{ }(\text{Lemma 3.6 \cite{Muk1}})\\
\nonumber&=\int_{0}^{1}\norm{f_{\xi_{1},\xi_{2}}(t,\cdot)}_{L^{2}(\lambda)}^{2}d\lambda(t)\\
\nonumber&=\int_{[0,1]\times
[0,1]}\abs{f_{\xi_{1},\xi_{2}}(t,s)}^{2}d(\lambda\otimes
\lambda)(t,s)<\infty.
\end{align}
Finally, let $S^{\prime}=\text{ span }S$.
\end{proof}

\begin{Remark}\label{Rem:append}
\emph{When $A$ is a masa of product class, the set $S$ in Lemma \ref{Lemma:appendix} 
can be chosen such that $\frac{d\eta_{\zeta}}{d(\lambda\otimes \lambda)}$ is essentially 
bounded for $\zeta\in S$ $($see proof of Theorem 2.5 and 2.7, \cite{Muk2}$)$. So the same is true 
for vectors in $S^{\prime}=\text{span }S$.}
\end{Remark}

\end{document}